\documentclass{article}
\setcounter{tocdepth}{2}
\usepackage[utf8]{inputenc}
\usepackage[textwidth=16cm,textheight=22cm,centering]{geometry}

\usepackage{amsmath, amssymb, amstext, amsfonts, textcomp, amsxtra, amsbsy, amsgen, amsopn, amscd, mathrsfs, amsthm, latexsym, array}
\usepackage{bbm, dsfont}
\usepackage{color}

\usepackage[colorlinks]{hyperref}

\newtheorem{theorem}{Theorem}[section]
\newtheorem{proposition}{Proposition}[section]
\newtheorem{lemma}      [theorem]{Lemma}
\newtheorem{corollary}  [theorem]{Corollary}

\theoremstyle{definition}
\newtheorem{definition}{Definition}[section]
\theoremstyle{remark}
\newtheorem{remark}{Remark}[section]
\theoremstyle{example}
\newtheorem{example}{Example}[section]


\newcommand{\intL}{{\rm L}}			
\newcommand{\Cont}{{\rm C}}			
\newcommand{\Sch}{\mathcal{S}}		
\newcommand{\Ht}{{\rm Ht}}			
\newcommand{\Tr}{{\rm Tr}}			
\newcommand{\Nr}{{\rm Nr}}			
\newcommand{\sgn}{{\rm sgn}}			

\newcommand{\Mellin}[2][]{\mathfrak{M}_{#1}[#2]}				

\newcommand{\norm}[1][\cdot]{\lvert #1 \rvert}                  
\newcommand{\extnorm}[1]{\left\lvert #1 \right\rvert}           
\newcommand{\Norm}[1][\cdot]{\lVert #1 \rVert}                  
\newcommand{\extNorm}[1]{\left\lVert #1 \right\rVert}           
\newcommand{\Pairing}[2]{\langle #1, #2 \rangle}                
\newcommand{\ProjF}{{\rm F}}

\newcommand{\gp}[1]{\mathbf{#1}}        
\newcommand{\GL}{{\rm GL}}
\newcommand{\PGL}{{\rm PGL}}
\newcommand{\PO}{{\rm PO}}
\newcommand{\SL}{{\rm SL}}

\newcommand{\N}{\mathbb{N}}
\newcommand{\Z}{\mathbb{Z}}
\newcommand{\Mat}{{\rm M}}

\newcommand{\Q}{\mathbb{Q}}     
\newcommand{\R}{\mathbb{R}}     
\newcommand{\C}{\mathbb{C}}     
\newcommand{\E}{\mathbf{E}}
\newcommand{\F}{\mathbf{F}}
\newcommand{\A}{\mathbb{A}}     
\newcommand{\vo}{\mathfrak{o}}  
\newcommand{\vp}{\mathfrak{p}}  
\newcommand{\idl}[1]{\mathfrak{#1}}
\newcommand{\vO}{\mathcal{O}}

\newcommand{\rpL}{{\rm L}}		
\newcommand{\rpR}{{\rm R}}		
\newcommand{\Bas}{\mathcal{B}}	
\newcommand{\Ind}{{\rm Ind}}		

\newcommand{\fin}{{\rm fin}}		
\newcommand{\eis}{{\rm E}}		
\newcommand{\RamC}{\vartheta}	
\newcommand{\Cond}{\mathbf{C}}	

\newcommand{\fa}{\mathbf{a}}
\newcommand{\fb}{\mathbf{b}}


\newcommand{\Vol}{{\rm Vol}}		
\makeatletter

\newcommand{\Rmnum}[1]{\expandafter\@slowromancap\romannumeral #1@}
\makeatother



\newcommand{\BC}{{\mathbb {C}}}

\newcommand{\BF}{{\mathbb {F}}}

\newcommand{\CC}{{\mathcal {C}}}

\newcommand{\CS}{{\mathcal {S}}}

\newcommand{\Fo}{{\mathfrak {o}}}
\newcommand{\Fp}{{\mathfrak {p}}}

\newcommand{\RI}{{\mathrm {I}}}


\def\wb{\overline} 

\def\vpi{\varpi}

\def\p{\prime}

\def\Cl{\mathrm{Cl}}
\def\Tr{\mathrm{Tr}}
\def\Nr{\mathrm{Nr}}
\def\fin{\mathrm{fin}}

\renewcommand{\Im}{{\mathrm{Im}}}

\renewcommand{\Re}{{\mathrm{Re}}}

\newcommand{\Res}{{\mathrm{Res}}}

\newcommand{\SO}{{\mathrm{SO}}}

\newcommand{\tr}{{\mathrm{tr}}}

\newcommand{\ud}{\,\mathrm{d}}
\newcommand{\vol}{{\mathrm{vol}}}

\newcommand{\wt}{\widetilde}

\newcommand{\bs}{\backslash}

\newtheorem{thm}{Theorem}[subsection]
\newtheorem{defin}[thm]{Definition}

\newtheorem{lem}[thm]{Lemma}
\newtheorem{cor}[thm]{Corollary}


\title{Bias of Root Numbers for Hilbert Newforms of Cubic Level}
\author{Zhilin Luo \quad Qinghua Pi \quad Han Wu}

\begin{document}

\maketitle

\begin{abstract}
	We give a general formula of the bias of root numbers for Hilbert modular newforms of cubic level. Explicit calculation is given when the base field is $\Q, \Q(\sqrt{2}), \Q(\sqrt{5})$ and the level is the cube of certain rational integers. This complements a previous result of the second author and extends the bias phenomenon to the number fields. Our method is based on Jacquet-Zagier's trace formula, and the explicit calculation works generally for all real quadratic fields of narrow class number one and for rational cubic levels.
\end{abstract}

\tableofcontents

\section{Introduction}

	\subsection{Newforms with Prescribed Root Number and Bias Phenomenon}
	
	Let $H_k^{\pm}(N)$ be the subspace of newforms of weight $2k$ for $\Gamma_0(N)$ with global root number $\pm 1$. It is discovered by Iwaniec-Luo-Sarnak in \cite{ILS00} that these two families of modular forms have different behaviour with respect to the distribution of the low lying zeros of the associated $L$-functions. As a preliminary result, they obtained the asymptotic formulas of $\dim H_k^{\pm}(N)$ \cite[Corollary 2.14]{ILS00} for square-free $N$. These formulas were considerably improved to exact ones by K.Martin \cite{Ma18}, who ran to this problem with a different motivation and also discovered an interesting bias phenomenon in favour of $H^+$ by establishing (we only present the case $N > 3$ for brevity) the non-negativity of
	$$ B(k,N) := \dim_{\C} H_k^+(N) - \dim_{\C} H_k^-(N) = c_N h(D_{-N}) - \delta(k,1), $$
	where $h(D_{-N})$ is the class number of $\Q(\sqrt{-N})$, $\delta$ is the Kronecker symbol and
	$$ c_N = \left\{ \begin{matrix} 1/2 & \quad \text{if } N \equiv 1,2 (4) \\ 1 & \text{if } N \equiv 7 (8) \\ 2 & \text{if } N \equiv 3 (8) \end{matrix} \right. . $$
	Such formulas were recently generalized to the case $k>1$ and $N=M^3$ for square-free $M$ by Pi-Qi \cite{PQ21}. Various methods have been applied to establish such formulas. The method of Iwaniec-Luo-Sarnak is via Petersson trace formula, which was refined and generalized by Pi-Qi with some new analytic input by Balkanova, Frolenkov and (Shenhui) Liu. The method of Martin is via a simple trace formula due to Yamauchi, which he remarked to be possibly messy for general level $N$.
	
	In a previous work of the third author \cite{W19}, the deduction of the Selberg trace formula for $\GL_2$ over a general number field $\F$ via applying the Rankin-Selberg method to the automorphic kernel function, an idea due to Jacquet-Zagier \cite{JZ87}, is completed. In a subsequent joint work of the third author \cite{CWZ21}, some variant of that method for $\SL_2$ was refined to extend the prime geodesic theorems to principal congruence subgroup case. An effective method of explicit computation of certain terms in the geometric side, namely the orbital integrals for $\F$-elliptic terms, is developed. In this paper, we continue the development of Jacquet-Zagier's method by applying it to the computation of $B(k,N^3)$ with $N$ square-free, as well as its generalization to Hilbert modular forms of cubic level. We encounter two major difficulties:
\begin{itemize}
	\item One difficulty is the localization of holomorphic forms on the spectral side, especially when weight $2$ appears. Overcoming this difficulty in the classical approach via trace formulas consists of sophisticated extension of the class of test functions, since the matrix coefficients of a discrete series is not smooth with compact support in general. We avoid such sophistication by a limiting process with a family of test functions in the space of Schwartz functions, whose construction is inspired by Hecke's treatment of weight $2$ Eisenstein series \cite{He27}. In the process we need the crucial analytic continuation of a certain Dirichlet series, whose coefficients are special values of some generalized Zagier's $L$-function. To this end, an interesting application of the trace formula in the inverse direction is performed. This is the major innovation in our method. We hope it to have general utility in the future applications of Jacquet-Zagier's trace formula.
	\item The other difficulty is the explicit computation of the special values $L(1,\eta_{\E/\F})$, where $\F$ is the base field, $\E/\F$ runs over the relevant quadratic field extensions appearing on the geometric side, and $\eta_{\E/\F}$ is the associated quadratic Hecke character. For example, in the case $\F$ is a real quadratic number field, we only know how to calculate $L(1,\eta_{\E/\F})$ for \emph{biquadratic} fields $\E$, due to our limited knowledge in computational algebraic number theory. This excludes the explicit computation of bias for certain cubic levels. For example over $\F=\Q(\sqrt{2})$, the level $N=(3+\sqrt{2})^3$ is excluded in our explicit formulas of bias, since $\F(\sqrt{3+\sqrt{2}})$ is not biquadratic.
\end{itemize}   
	Nevertheless, our main formula in Theorem \ref{MainGenF} below is written in the general setting. We hope it to be useful in determining all bias phenomena in the future.
	
\begin{remark}
	It would be worth to compare our method with the method of taking pseudo-coefficients (at the archimedean places) directly in the trace formula (call it PC for simplicity). It is true that PC is simpler than the ours for the computation of bias for cubic level, since the geometric side of the relevant trace formula will consist of only elliptic orbital integrals. This is no longer true if we try to compute the bias for square-free level, in which case one can not take matrix coefficients as test function at the ramified primes because these coefficients for the Steinberg representations are not integrable. The natural choice in this case is a pseudo-coefficient inspired from the Atkin-Lehner theory. Up to normalization, it is given by a characteristic function
	$$ f_{\vp}(g) = \mathbbm{1}_{\gp{K}_0[\vp]} \left( \begin{pmatrix} & 1 \\ \varpi_{\vp} & \end{pmatrix}^{-1} g \right), $$
	where $\gp{K}_0[\vp]$ is the Iwarohi subgroup at $\vp$. In this case, especially for $k=1$, the terms \cite[(6.36) \& (6.37)]{GJ79} are no longer vanishing and difficult to compute, since their expression in terms of the orbital integrals or trace distributions are not available. Our method should still work, although some extra difficulties appear. Precisely, Lemma \ref{SimpleRSF} (1) below no longer applies to our test function, and we need to handle the terms $I_{\infty}''(s)$ and $I_{\infty}'''(s)$. The techniques to handle them are different from those designed for elliptic orbital integrals, which are presented in the current paper. We reserve it for another paper.
\end{remark}	

\begin{remark}
	To the best of our knowledge, the dimension formulas in the number field case, namely the analogue for Hilbert modular forms of $\dim_{\C} H_k^+(N) + \dim_{\C} H_k^-(N)$ are not known. It looks to us that handling them with our method has the same level of difficulty as the bias problem for square-free level mentioned in the previous remark. We will try to compute them in the same reserved paper, and study the relevant equidistribution problem. However, we don't think that our method can compute the dimensions of newforms including holomorphic Eisenstein series (for even power level only), which looks like a problem of different nature.
\end{remark}

\begin{remark}
	We also plan to investigate the possible application of Jacquet-Zagier's trace formula to the first moment of the symmetric square $L(s, f, \mathrm{Sym})$, for which several results (e.g. Theorem \ref{AutoKernSpecDSch}) contained in the current paper are fundamental. For this reason, they are established in a form stronger than what we actually need in this paper.
\end{remark}

	\subsection{A Generalization of Zagier's $L$-Function}
	
	To a discriminant $\delta \in \Z$ (that is $\delta = b^2-4ac$ for some $a,b,c \in \Z$), Siegel\footnote{Siegel seems to be the first to introduce this $L$-series (see the discussion in \cite[\S 2]{SY13}). But in the literature many people call it Zagier's $L$-function due to his work \cite{Za76}.} associated an $L$-series
\begin{equation}
	L(s, \delta) := \frac{\zeta(2s)}{\zeta(s)} \sum_{q=1}^{\infty} \rho_q(\delta) q^{-s}, \quad \rho_q(\delta) := \extnorm{ \left\{ x \pmod{2q} \ \middle| \ x^2 \equiv \delta \pmod{4q} \right\} }.
\label{ZagLQ}
\end{equation} 
	If $\delta$ is non-zero, we may write $\delta = Dl^2$ with $D$ a fundamental discriminant. Then we have the factorization (see \cite[(4) \& (7)]{SY13})
	$$ L(s,\delta) = L(s, \chi_D) \cdot \prod_{p^k \parallel l} p^{\left( \frac{1}{2}-s \right)k} \frac{\left( p^{\left( \frac{1}{2}-s \right) (k+1)} - p^{\left( s-\frac{1}{2} \right) (k+1)} \right) - \chi_D(p) p^{-\frac{1}{2}} \left( p^{\left( \frac{1}{2}-s \right) k} - p^{\left( s-\frac{1}{2} \right) k} \right) }{p^{\frac{1}{2}-s} - p^{s-\frac{1}{2}}}, $$
	where $\chi_D$ is the unique quadratic character associated with the quadratic extension $\Q[\sqrt{D}]/\Q$.
	
	For our purpose, we need a generalization to number field, whose arithmetic meaning, namely an analogue of the expression (\ref{ZagLQ}), remains mysterious to us.

\begin{definition}
	Let $\idl{J}$ be a non-zero integral ideal of $\vo$. A $\idl{J}$-discriminant is a $\delta \in \idl{J}^{-2}$ such that $\delta = b^2 - 4a$ for some $a \in \idl{J}^{-2}, b \in \idl{J}^{-1}$.
\end{definition}

\noindent Let $\delta$ be a non-zero $\idl{J}$-discriminant. Let $\E = \F[\sqrt{\delta}]$ be the corresponding quadratic (algebra) extension of $\F$, whose discriminant ideal is $D_{\E/\F}$. Let $\eta_{\E}$ be the quadratic Hecke character associated with $\E/\F$. We have $\delta \vo = D_{\E/\F} l^2$ for a unique fractional ideal $l \subset \idl{J}^{-1}$. (To see the existence, choose any $x \in \idl{J}$ so that $\vo[\sqrt{\delta x^2}] \subset \vo_{\E}$ is a quadratic order, and compute its discriminant at every finite place $\vp$.) We define
\begin{align}
	L \left( s+\frac{1}{2},\delta; \idl{J} \right) &= \Nr(\idl{J})^{-s-\frac{1}{2}} \cdot L \left( s+\frac{1}{2}, \eta_{\E} \right) \cdot \nonumber \\
	&\quad \prod_{\vp^k \parallel l} q^{-s(k+k_{\vp})} \frac{\left( q^{s (k+k_{\vp}+1)} - q^{-s (k+k_{\vp}+1)} \right) - \eta_{\E}(\varpi_{\vp}) q^{-\frac{1}{2}} \left( q^{s (k+k_{\vp})} - q^{-s (k+k_{\vp})} \right) }{q^s - q^{-s}}, \label{ZagLF}
\end{align}
	where we have denoted $q = \Nr(\vp), k_{\vp} = \mathrm{ord}_{\vp}(\idl{J})$ on the right hand side, and we have the formula
	$$ \eta_{\E}(\varpi_{\vp}) = \left\{ \begin{matrix} -1 & \text{if } \E/\F \text{ is unramified at } \vp \\ 0 & \text{if } \E/\F \text{ is ramified at } \vp \\ 1 & \text{if } \E/\F \text{ is split at } \vp \end{matrix} \right. . $$
	If $\idl{N}$ is a (square-free) integral ideal, we also denote a variant as
\begin{align}
	L^{(\idl{N})} \left( s+\frac{1}{2},\delta; \idl{J} \right) &= \Nr(\idl{J})^{-s-\frac{1}{2}} \cdot L \left( s+\frac{1}{2}, \eta_{\E} \right) \cdot \nonumber \\
	&\quad \prod_{\vp^k \parallel l, \vp \nmid \idl{N}} q^{-s(k+k_{\vp})} \frac{\left( q^{s (k+k_{\vp}+1)} - q^{-s (k+k_{\vp}+1)} \right) - \eta_{\E}(\varpi_{\vp}) q^{-\frac{1}{2}} \left( q^{s (k+k_{\vp})} - q^{-s (k+k_{\vp})} \right) }{q^s - q^{-s}}. \label{ZagLFVar}
\end{align}

\begin{lemma}
	For any $\epsilon > 0$ and as $\idl{J}, \delta \in \idl{J}^{-2}$ vary, we have
	$$ \extnorm{L(1,\delta;\idl{J})}, \extnorm{L^{(\idl{N})}(1,\delta;\idl{J})} \ll_{\epsilon} \Nr(\idl{J})^{-1+2\epsilon} \prod_{v \mid \infty} \norm[\delta]_v^{\epsilon}. $$
\label{GenZagLBd}
\end{lemma}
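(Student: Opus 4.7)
The plan is to evaluate the product formula (\ref{ZagLF}) at $s=1/2$ and bound each of the three multiplicative pieces separately: the explicit power $\Nr(\idl{J})^{-1}$, the special value $L(1,\eta_{\E})$, and the finite product indexed by $\vp^k\|l$. Since (\ref{ZagLFVar}) only omits some of those local factors, it will suffice to bound $L(1,\delta;\idl{J})$, provided each omitted factor turns out to be at least $1$.

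The crucial computation is a closed form for the local factor at $s=1/2$. Setting $m=k+k_{\vp}$ and $\eta=\eta_{\E}(\varpi_{\vp})\in\{-1,0,1\}$, and applying the geometric-series identity $(q^{s(n+1)}-q^{-s(n+1)})/(q^{s}-q^{-s})=\sum_{j=0}^{n}q^{s(n-2j)}$ to both terms in the numerator of (\ref{ZagLF}), a telescoping collapses the local factor at $s=1/2$ to
$$ 1+(1-\eta)\sum_{j=1}^{m}q^{-j}\;\in\;\bigl[1,\,1+\tfrac{2}{q-1}\bigr]. $$
In particular each factor lies in $[1,3]$, so the full product is bounded by $3^{r}$ where $r$ counts the prime ideals in the (finite) support of $l\idl{J}$; the standard estimate $r\ll_{\epsilon}\Nr(l\idl{J})^{\epsilon}$ then gives $\prod_{\vp^k\|l}(\cdot)\ll_{\epsilon}\Nr(l\idl{J})^{\epsilon}$. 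The inclusion $l\subset\idl{J}^{-1}$ makes $l\idl{J}$ integral, so $\Nr(l)\ge\Nr(\idl{J})^{-1}$; and $\delta\vo=D_{\E/\F}\,l^{2}$ combined with the global product formula $\prod_{v}|\delta|_{v}=1$ yields $\Nr(l)^{2}=\prod_{v|\infty}|\delta|_{v}/\Nr(D_{\E/\F})\le\prod_{v|\infty}|\delta|_{v}$. Hence the local-factor product is $\ll_{\epsilon}\Nr(\idl{J})^{\epsilon}\prod_{v|\infty}|\delta|_{v}^{\epsilon/2}$.

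For the remaining factor, any standard convex bound for Hecke $L$-functions of quadratic characters at $s=1$ gives $|L(1,\eta_{\E})|\ll_{\epsilon}\Nr(D_{\E/\F})^{\epsilon}$; inserting $\Nr(D_{\E/\F})\le\Nr(\idl{J})^{2}\prod_{v|\infty}|\delta|_{v}$, which follows from $\Nr(l)\ge\Nr(\idl{J})^{-1}$, upgrades this to $|L(1,\eta_{\E})|\ll_{\epsilon}\Nr(\idl{J})^{2\epsilon}\prod_{v|\infty}|\delta|_{v}^{\epsilon}$. Multiplying the three pieces produces $\Nr(\idl{J})^{-1+3\epsilon}\prod_{v|\infty}|\delta|_{v}^{3\epsilon/2}$, and rescaling $\epsilon$ yields the stated bound. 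Since every local factor was shown to be $\ge 1$, the variant $L^{(\idl{N})}(1,\delta;\idl{J})$ in (\ref{ZagLFVar})---obtained from the positive quantity $L(1,\delta;\idl{J})$ by deleting a subset of these positive factors---satisfies $0\le L^{(\idl{N})}(1,\delta;\idl{J})\le L(1,\delta;\idl{J})$ and inherits the same bound. The only non-mechanical input is the convex bound on $L(1,\eta_{\E})$, which is standard for quadratic Hecke characters; everything else reduces to algebraic bookkeeping through the identity $\delta\vo=D_{\E/\F}\,l^{2}$ and the product formula.
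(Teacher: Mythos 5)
Your proof is correct and follows essentially the same route as the paper: bound $\Nr(D_{\E/\F})$ via the inclusion $\delta\idl{J}^2\subset\vo$, apply the convex bound to $L(1,\eta_{\E})$, and bound each local Euler factor by an absolute constant so the finite product is $\ll_\epsilon\Nr(\delta\idl{J}^2)^\epsilon$. One small improvement you make is the exact closed form $1+(1-\eta)\sum_{j=1}^{m}q^{-j}\in[1,3]$ for the local factor at $s=1/2$ — the paper's stated bound of $1+3q^{-1}$ per factor is actually slightly too small when $q=2$ and $m\ge 3$ (the factor can reach $3$), but this is harmless since any $O(1)$ bound suffices — and the observation that all local factors are $\ge 1$ gives a cleaner justification of $|L^{(\idl{N})}|\le|L|$ than the paper's ``same proof applies.''
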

\begin{proof}
	Since $\vo \supset \delta \idl{J}^2 = D_{\E/\F} (l \idl{J})^2$, we have $\Nr(D_{\E/\F}) \leq \Nr(\delta \idl{J}^2)$. Applying the convex bound of $L$-functions (see \cite[(5.21)]{IK04}), we get
	$$ \extnorm{L(1,\delta;\idl{J})} \leq \Nr(\idl{J})^{-1} \extnorm{L(1,\eta_{\E})} \prod_{\vp^k \parallel l} (1+3q^{-1}) \ll_{\epsilon} \Nr(\idl{J})^{-1} \Nr(D_{\E/\F})^{\epsilon} \Nr(\delta \idl{J}^2)^{\epsilon}, $$
	which implies readily the desired bound. The same proof applies to $L^{(\idl{N})}(1,\delta;\idl{J})$.
\end{proof}

	\subsection{Main Results}
	
	Recall that $\F$ is a totally real number field of degree $d$ with ring of integers $\vo$ and absolute discriminant $D_{\F}$. Let $\vec{k}=(k_v)_{v \mid \infty} \in \Z_{\geq 1}^d$ be a weight vector, and let $\idl{N}$ be a square-free ideal of $\vo$. Let $U_+$ be the group of totally positive units of $\F$, and choose $U_2$ as a system of representatives for $U_+/(\vo^{\times})^2$. Denote by $\mathcal{A}(\vec{k},\idl{N}^3)$ the set of Hilbert modular Hecke eigen-newforms on $\mathbb{H}^d$ with weight $2\vec{k}$ and level $\idl{N}^3$. Our first main result is a general formula for the bias of root numbers in $\mathcal{A}(\vec{k},\idl{N}^3)$ defined by
	$$ B(\vec{k},\idl{N}^3) := \sum_{f \in \mathcal{A}(\vec{k}, \idl{N}^3)} \varepsilon \left( \frac{1}{2},f \right). $$
	
\begin{theorem}
	(1) If $\idl{N}$ is not a square in the narrow class group $\Cl_+(\F)$ of $\F$, then $B(\vec{k},\idl{N}^3) = 0$.
	
\noindent (2) Otherwise, we choose an integral ideal $\idl{J}$ such that $\idl{J}^{-2} \idl{N}$ is principal with a totally positive generator $N$. For $x \in \F$ and $v \mid \infty$, we write $x_v$ for its image under the natural embedding $\F \to \F_v$. Then we have
\begin{align*}
	B(\vec{k},\idl{N}^3) &= 2 D_{\F}^{\frac{1}{2}} \sum_{u \in U_2} \sum_n 2^{-\mathbbm{1}_{n=0}} \left( \prod_{v \mid \infty} \frac{(4N_v)^{k_v}}{2\pi} \Re \left( \frac{1}{\left( \sqrt{(4uN-(nuN)^2)_v} +i (nuN)_v \right)^{2k_v-1}} \right) \right) \cdot \\
	&\quad L^{(\idl{N})}(1, (nuN)^2-4uN; \idl{J}) \cdot A(n,\idl{N}),
\end{align*} 
	where the inner sum is over $n \in \idl{J}/\{ \pm 1 \}$ so that 
\begin{itemize}
	\item $4uN-(nuN)^2$ is totally positive,
	\item and $\F[\sqrt{4uN-(nuN)^2}]$ is ramified at every prime ideal $\vp \mid \idl{N}$,
\end{itemize}	
	$L^{(\idl{N})}(s,\delta;\idl{J})$ is a generalization of Zagier's $L$-function defined in (\ref{ZagLFVar}), and
	$$ A(n,\idl{N}) := \prod_{\vp \mid \idl{N}} \left\{ \begin{matrix} \Nr(\vp)-1 & \text{if } n \in \vp \idl{J} \\ -1 & \text{otherwise} \end{matrix} \right. . $$
\label{MainGenF}
\end{theorem}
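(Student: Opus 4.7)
The plan is to apply the Jacquet-Zagier trace formula for $\GL_2/\F$ developed in \cite{W19, CWZ21} with a test function $\Phi = \prod_v \Phi_v$ designed so that the spectral side extracts $B(\vec{k}, \idl{N}^3)$. At each archimedean $v$ I take $\Phi_v$ isolating the holomorphic discrete series of weight $2k_v$: for $k_v \geq 2$ a smoothed matrix coefficient suffices, while for $k_v = 1$ we approach it through a family in the Schwartz space following the Hecke-inspired limiting device mentioned in the introduction. At finite $\vp \nmid \idl{N}$ take $\Phi_{\vp}$ to be the unit of the spherical Hecke algebra. At $\vp \mid \idl{N}$, take an Atkin-Lehner type test function selecting the newvector of a conductor-$\vp^3$ representation $\pi_{\vp}$ and acting on it by $\varepsilon(1/2, \pi_{\vp})$, while annihilating representations of other conductor. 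The product over all places of local root numbers reconstitutes $\varepsilon(1/2, f)$, so the cuspidal spectral contribution equals $B(\vec{k}, \idl{N}^3)$.

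The geometric side, obtained by Rankin-Selberg unfolding of the automorphic kernel, is a sum of weighted orbital integrals indexed by rational conjugacy classes. The support condition at $\vp \mid \idl{N}$ kills the central, split hyperbolic, and parabolic contributions, leaving only $\F$-elliptic regular orbits. Each such orbit is determined by its invariants $(a, b) := (\det \gamma, \tr \gamma)$ with discriminant $\delta = b^2 - 4a$ totally negative, generating a CM extension $\E = \F[\sqrt{\delta}]$. I would parametrize these orbits using $\idl{J}^{2} N = \idl{N}$ by writing $a = uN$ for a representative $u \in U_2$ and $b = nuN$ for $n \in \idl{J}/\{\pm 1\}$; the very existence of such a parametrization demands $[\idl{N}] \in \Cl_+(\F)^2$, giving part (1): when this fails, a pairing of elliptic orbits via conjugation by an Atkin-Lehner representative produces pairwise cancellation and forces $B(\vec{k}, \idl{N}^3) = 0$. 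The ramification condition on $\F[\sqrt{4uN-(nuN)^2}]$ at primes dividing $\idl{N}$ comes from the non-vanishing of the local orbital integrals against the Atkin-Lehner test function.

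Each global orbital integral then factors into local ones. At finite $\vp \nmid \idl{N}$, the local elliptic orbital integral is, by the standard computation, the $\vp$-Euler factor of $L(1, \eta_{\E/\F})$ times a conductor-correction depending on $\mathrm{ord}_{\vp}(l)$ where $\delta \vo = D_{\E/\F} l^2$; assembling these across such primes produces exactly $L^{(\idl{N})}(1, \delta; \idl{J})$ as defined in (\ref{ZagLFVar}). At $\vp \mid \idl{N}$, direct local computation on $\F$-elliptic elements yields the combinatorial weight $A(n, \idl{N})$, split into two cases according to whether $n \in \vp \idl{J}$ or not. At each archimedean $v$, the technique of \cite{CWZ21} evaluates the orbital integral of a discrete-series matrix coefficient as $(4N_v)^{k_v}(2\pi)^{-1} \Re\bigl((\sqrt{(4uN-(nuN)^2)_v} + i (nuN)_v)^{1-2k_v}\bigr)$, supplying the remaining factor. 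Combining everything yields the claimed formula.

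The principal obstacle, flagged in the introduction, is the localization at weight $2$: matrix coefficients of the discrete series $D_2$ are not integrable, so the test function above cannot be substituted directly into the trace formula at such places. The planned workaround is to take a Schwartz approximation, verify convergence of both spectral and geometric sides along the family, and pass to the limit. This hinges on the analytic continuation of a Dirichlet series whose coefficients are special values of the generalized Zagier $L$-function---Lemma \ref{GenZagLBd} supplies the polynomial bound needed for absolute convergence in a half plane, but extending it to the critical point will require running the trace formula in the reverse direction. This last step is expected to be the most delicate part of the argument.
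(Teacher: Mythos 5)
Your overall strategy coincides with the paper's: Jacquet–Zagier trace formula, test functions at $\vp\mid\idl{N}$ built to project onto conductor-$\vp^3$ representations and return the local $\varepsilon$-factor (in the paper this is done via matrix coefficients of simple supercuspidal representations, averaged over Iwahori conjugation, see Proposition~\ref{NArchC}), spherical test functions at unramified finite places, a $\delta$-family of Schwartz test functions at the archimedean places, passage to a limit $\delta\to 0^+$, and analytic continuation of the arising Dirichlet series by running the trace formula in the reverse direction (Proposition~\ref{TFExtQ} and its number-field extension). Your local computations reproducing $L^{(\idl{N})}(1,\delta;\idl{J})$ and $A(n,\idl{N})$, and the evaluation of the archimedean orbital integral, match Corollary~\ref{NANPart}, Lemma~\ref{NAUnr} and Lemma~\ref{SpAC}.

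One point is wrong: your proposed mechanism for part (1). You write that when $\idl{N}$ is not a square in $\Cl_+(\F)$, ``a pairing of elliptic orbits via conjugation by an Atkin–Lehner representative produces pairwise cancellation.'' That is not what happens, and the claim would not survive closer inspection: there is no hidden involution pairing nonzero elliptic terms. In the paper (Lemma~\ref{GL1NV} and Corollary~\ref{NSNB}), the local support conditions of the test function---at $\vp\mid\idl{N}$ forcing $z_\vp\lambda$ into an Eisenstein-polynomial root, at $\vp\nmid\idl{N}$ into $\GL_2(\vo_\vp)$, and at $v\mid\infty$ forcing totally positive determinant---show that $f(x^{-1}\lambda x)$ vanishes \emph{identically} for every $\F$-elliptic $\lambda$ unless $\idl{N}$ is a square in $\Cl_+(\F)$. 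The $\GL_1$ side is then literally zero term by term, not zero by cancellation. The first sentence of your argument (``the very existence of such a parametrization demands $[\idl{N}]\in\Cl_+(\F)^2$'') already points at the correct support-based vanishing; the ``pairwise cancellation'' clause should be discarded.
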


\begin{remark}
	Note we have the following equality
\begin{equation} 
	\norm[U_2] = \norm[\Cl_+(\F)] / \norm[\Cl(\F)]. 
\label{U2ClGpsRel}
\end{equation}
	To see this, let $\F_+$, resp. $P$, resp. $P_+$, resp. $I$ be the group of positive elements in $\F^{\times}$, resp. principal ideals, resp. principal ideals with totally positive generator, resp. ideals. On the one hand, we have an exact sequence
	$$ 1 \to U_+ \to \vo^{\times} \to \F^{\times}/\F_+ \to P/P_+ \to 1, $$
	since $P=\F^{\times}/\vo^{\times}$ and $P_+ = \F_+ \vo^{\times}/\vo^{\times}$ by definition. By the weak approximation theorem, we have $\F^{\times}/\F_+ \simeq (\Z/2\Z)^d$. By Dirichlet's theorem of units, we know $\vo^{\times} \simeq \Z/2\Z \oplus \Z^{d-1}$, hence $\vo^{\times}/(\vo^{\times})^2 \simeq (\Z/2\Z)^d$. Therefore $\norm[U_2] = \norm[\vo^{\times}/(\vo^{\times})^2]/\norm[\vo^{\times}/U_+] = \norm[\F^{\times}/\F_+]/\norm[\vo^{\times}/U_+]=\norm[P/P_+]$. On the other hand, we have another short exact sequence by definition
	$$ 1 \to P/P_+ \to I/P_+ = \Cl_+(\F) \to I/P = \Cl(\F) \to 1. $$
	The equality (\ref{U2ClGpsRel}) follows readily.
\end{remark}

	We would like to draw some concrete consequences of the above general formula. First consider the case $\F = \Q$. We shall not distinguish an ideal and its positive generator, since $\Q$ has narrow class number one. We have the following generalization of \cite{PQ21} which includes the case $k=1$.
	
\begin{corollary}
	Let $N > 1$ be a square-free natural number. Denote by $\varphi(\cdot)$ the Euler's totient function, and by $h(D)$ the class number of $\Q[\sqrt{D}]$ for a fundamental discriminant $D$.
\begin{itemize}
	\item[(1)] If $N > 3$, then we have
	$$ B(k,N^3) = \left\{ \begin{matrix} h(-N)\varphi(N) & \text{if } N \equiv 7 (8) \\ 2h(-N)\varphi(N) & \text{if } N \equiv 3 (8) \\ 2^{-1} h(-4N) \varphi(N) & \text{if } N \equiv 1,2 (4) \end{matrix} \right.. $$
	\item[(2)] If $N=2$, then we have
	$$ B(k,8) = \left\{ \begin{matrix} 0 & \text{if } k \equiv 0,1 (4) \\ 1 & \text{if } k \equiv 2,3 (4) \end{matrix} \right.. $$
	\item[(3)] If $N=3$, then we have
	$$ B(k,27) = \left\{ \begin{matrix} 1 & \text{if } k \equiv 0,1 (3) \\ 2 & \text{if } k \equiv 2 (3) \end{matrix} \right.. $$
\end{itemize}
\label{MainQ}
\end{corollary}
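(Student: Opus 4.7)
The plan is to specialize Theorem \ref{MainGenF} to $\F = \Q$ and enumerate the very few contributing orbital terms. Since $\Q$ has narrow class number one we may take $\idl{J} = \Z$ (so $k_\vp = 0$ everywhere), $U_2 = \{1\}$, $D_\F = 1$, and write $\idl{N} = N\Z$. The positivity constraint $(nN)^2 < 4N$ forces $n = 0$ whenever $N \geq 5$, which by square-freeness covers case (1), and allows additionally $n = 1$ when $N \in \{2, 3\}$. In every case the imaginary quadratic field $\E = \Q(\sqrt{(nN)^2 - 4N})$ attached to the $L$-function factor is ramified at every prime $p \mid N$ (automatic from square-freeness of $N$ for $n = 0$, direct inspection of $\Q(i)$ and $\Q(\sqrt{-3})$ in the two exceptional cases).

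For the $n = 0$ term, the complex argument inside the $\Re$ vanishes, so the archimedean factor collapses to $\sqrt{N}/\pi$, and $A(0, N) = \varphi(N)$. Writing $-4N = D_\E\,\ell^{2}$ with $D_\E$ the fundamental discriminant, one has $\ell = 1$ when $N \equiv 1, 2 \pmod 4$ (empty Euler product in (\ref{ZagLFVar})), and $\ell = 2$ when $N \equiv 3 \pmod 4$ (single local factor at $p = 2$ equal to $(3 - \chi_{-N}(2))/2 \in \{1, 2\}$, distinguishing $N \equiv 7$ from $N \equiv 3 \pmod 8$). Substituting the Dirichlet class number formula $L(1, \chi_D) = 2\pi h(D)/(w_D \sqrt{|D|})$, with $w_D = 2$ throughout case (1), and watching the $\sqrt{N}$'s and $\pi$'s cancel yields the three sub-formulas of case (1) simultaneously.

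For cases (2) and (3), I would first compute the $n = 0$ term exactly as above, obtaining the $k$-independent constants $1/2$ for $N = 2$ and $4/3$ for $N = 3$ (the latter absorbs a factor of $2$ from the $p = 2$ local factor, since $D_\E = -3$ and $\ell = 2$). For $n = 1$, the expression $\sqrt{4N - N^2} + iN$ has modulus $2\sqrt{N}$ and argument $\pi/4$ when $N = 2$, $\pi/3$ when $N = 3$, so $\Re((\cdots)^{-(2k-1)})$ introduces a cosine $\cos((2k-1)\pi/4)$ or $\cos((2k-1)\pi/3)$ which is periodic in $k$ with period $4$ or $3$. The corresponding fields $\Q(i)$ and $\Q(\sqrt{-3})$ have $\ell = 1$ (no Euler correction) and require the exceptional values $w_{-4} = 4$, $w_{-3} = 6$ in the class number formula; combined with $A(1, N) = -1$, the $n = 1$ term provides precisely the periodic correction to the $n = 0$ constant needed to match (2) and (3).

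The only delicate bookkeeping is the $p = 2$ Euler factor in (\ref{ZagLFVar}) when $N \equiv 3 \pmod 4$, responsible for the split of case (1) according to $N \bmod 8$, together with the exceptional unit counts $w_{-3} = 6$ and $w_{-4} = 4$ in cases (2) and (3). Neither point presents a conceptual obstacle; everything else is routine arithmetic pairing the Dirichlet class number formula against the archimedean weight, with the cancellations built into the general formula doing the real work.
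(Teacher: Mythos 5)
Your proposal is correct and follows essentially the same route as the paper: specialize Theorem \ref{MainGenF} with $\idl{J}=\Z$, $U_2=\{1\}$, observe that the positivity constraint $(nN)^2<4N$ leaves only $n=0$ for $N>3$ and additionally $n=1$ for $N\in\{2,3\}$, then evaluate each term by pairing the Dirichlet class number formula against the archimedean weight and the $p=2$ Euler correction in $L^{(N)}(1,\cdot)$. Your bookkeeping of $\ell$, the local factor $(3-\chi_{-N}(2))/2$, the unit counts $w_{-3}=6$, $w_{-4}=4$, and the periods $4$ and $3$ of the cosines in $k$ all match the paper's computation.
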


	In the number field setting, we first investigate the case $\F = \Q[\sqrt{2}]$, for which any rational square-free odd integer $N \geq 3$ is still square-free in the PID $\vo = \Z[\sqrt{2}]$.
	
\begin{corollary}
	Let $N \geq 3$ be a square-free natural odd number. Denote by $\varphi_{\F}(\cdot)$ the Euler's totient function over $\F$, such that
	$$ \varphi_{\F}(N) = \sideset{}{_{\vp \mid N}} \prod (\Nr(\vp)-1). $$
	Write $h(D)$ for the class number of $\Q[\sqrt{D}]$ for a fundamental discriminant $D$.
\begin{itemize}
	\item[(1)] If $N > 3$, then we have
	$$ B(\vec{k},N^3) = \varphi_{\F}(N) h(-N) h(-2N) \cdot \left\{ \begin{matrix} \frac{5}{2} & \text{if } N \equiv 3 (8) \\ 1 & \text{if } N \equiv 7 (8) \\ \frac{3}{4} & \text{if } N \equiv 1 (4) \end{matrix} \right.. $$
	\item[(2)] If $N = 3$, writing $\vec{k}=(k_1,k_2)$ then we have
	$$ B(\vec{k}, 27) = \left\{ \begin{matrix} 12 & \text{if } k_1,k_2 \equiv 2 (3) \\ 13 & \text{if } k_1,k_2 \equiv 0,1 (3) \\ 14 & \text{otherwise} \end{matrix} \right.. $$
\end{itemize} 
\label{MainQSq2}
\end{corollary}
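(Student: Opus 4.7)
The plan is to specialize Theorem \ref{MainGenF} to $\F = \Q(\sqrt{2})$. Since the fundamental unit $1+\sqrt{2}$ has norm $-1$, the narrow class number of $\F$ equals $1$; by (\ref{U2ClGpsRel}) we may take $U_2 = \{1\}$ and $\idl{J} = \vo$, so that $N$ itself is the totally positive generator of $\idl{N} = N\vo$ and the outer prefactor becomes $2D_\F^{1/2} = 4\sqrt{2}$.

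The first step is to enumerate the $n \in \vo/\{\pm 1\}$ meeting both the positivity condition $|n|_v < 2/\sqrt{N}$ at every archimedean place and the ramification condition at each $\vp \mid N$. The inequality forces $n = 0$ for $N \geq 5$, and a finite check in $\Z[\sqrt{2}]$ adds $n = 1$ when $N = 3$; the ramification condition at odd $\vp \mid N$ is automatic since $(nN)^2 - 4N$ has odd $\vp$-valuation. The archimedean factor simplifies via the identity $(nN)_v^2 + (4N - (nN)^2)_v = 4N$, which gives $\sqrt{(4N-(nN)^2)_v} + i(nN)_v = 2\sqrt{N}\,e^{i\theta_v}$ with $\sin\theta_v = n_v\sqrt{N}/2$. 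The archimedean product collapses to $\prod_v (\sqrt{N}/\pi)\cos((2k_v-1)\theta_v)$, equal to $N/\pi^2$ for $n = 0$, and for $N = 3$, $n = 1$ to $(3/\pi^2)\cos((2k_1-1)\pi/3)\cos((2k_2-1)\pi/3)$, depending only on $k_1, k_2 \pmod 3$.

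The central computational step is to evaluate $L^{(N)}(1, (nN)^2 - 4N; \vo)$. The quadratic extension $\E = \F(\sqrt{(nN)^2 - 4N})$ is biquadratic over $\Q$: for $n = 0$ it is $\Q(\sqrt{2}, \sqrt{-N})$, and for $n = 1, N = 3$ it is $\Q(\sqrt{2}, \sqrt{-3})$. The conductor--discriminant decomposition of $\zeta_\E$ combined with $\zeta_\E(s) = \zeta_\F(s) L(s, \eta_\E)$ and $\zeta_\F(s) = \zeta(s) L(s, \chi_8)$ gives
\[
L(1, \eta_\E) = L(1, \chi_{D_1}) L(1, \chi_{D_2}),
\]
with $D_1, D_2$ the fundamental discriminants of $\Q(\sqrt{(nN)^2-4N})$ and $\Q(\sqrt{2((nN)^2-4N)})$; Dirichlet's class number formula converts this into $h(-N)h(-2N)$ (for $n = 0$) or $h(-3)h(-6)$ (for $n = 1$, $N = 3$), up to explicit square-root constants. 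The ideal $l$ determined by $\delta\,\vo = D_{\E/\F} l^2$ is supported only above $2$: at every odd $\vp \mid \idl{N}$ the valuation of $\delta$ is cancelled by $\vo_\vp(D_{\E/\F}) = 1$, and away from $\idl{N}$ and $(\sqrt{2})$ the extension $\E/\F$ is unramified. For $n = 0$ the pair $(\vo_{(\sqrt{2})}(l),\,\eta_\E(\varpi_{(\sqrt{2})}))$ is determined by $N \pmod 8$ via the factorisation of $2$ in the three quadratic subfields of $\E$ together with the global identity $D_{\E/\Q} = 8 \cdot |D_1| \cdot |D_2|$: one obtains $(2,-1)$, $(2,+1)$, $(1,0)$ for $N \equiv 3 \pmod 8$, $N \equiv 7 \pmod 8$, $N \equiv 1 \pmod 4$ respectively, and direct substitution in (\ref{ZagLFVar}) yields local factors $5/2$, $1$, $3/2$. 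Combined with $A(0, \idl{N}) = \varphi_\F(N)$ and the archimedean factor $N/\pi^2$, assembling constants produces the three rational prefactors $5/2$, $1$, $3/4$ in (1).

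For case (2), the $n = 0$ contribution is computed analogously, using $h(-3) = 1$, $w_{-3} = 6$, $h(-24) = 2$ and the fact that $-3 \equiv 5 \pmod 8$ forces $(\sqrt{2})$ inert in $\E = \Q(\sqrt{2}, \sqrt{-3})$; this yields the value $40/3$. For $n = 1$ one has $-3\vo = (3) = D_{\E/\F}$, hence $l = \vo$ and $L^{(3)}(1,-3;\vo) = L(1, \eta_\E)$ without local correction; since $1 \notin (3)$, $A(1, 3\vo) = -1$. Adding the two contributions produces $40/3 - (4/3)\cos((2k_1-1)\pi/3)\cos((2k_2-1)\pi/3)$, whose three possible values $12, 13, 14$ give the three subcases. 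The main obstacle is the behaviour at the prime $(\sqrt{2})$: in residue characteristic $2$, the discriminant exponent $\vo_{(\sqrt{2})}(D_{\E/\F})$ of a quadratic extension is sensitive to fine congruences in $\vo^\times/(\vo^\times)^2$, and a direct Kummer-theoretic analysis would be unpleasant. The plan circumvents this by exploiting the biquadratic structure of $\E/\Q$, recovering $\vo_{(\sqrt{2})}(D_{\E/\F})$ uniformly from the decomposition of $2$ in the three quadratic subfields via the global conductor--discriminant formula.
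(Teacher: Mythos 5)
Your proposal is correct and follows the same overall strategy as the paper's proof: specialize Theorem \ref{MainGenF} with $\idl{J}=\vo$, $U_2=\{1\}$, enumerate the admissible $n$ ($n=0$ for $N>3$, $n\in\{0,\pm1\}$ for $N=3$), determine $D_{\E/\F}$ and $l$ using the biquadratic structure of $\E/\Q$ and the conductor--discriminant formula, evaluate the local factor at $(\sqrt2)$, and assemble constants.

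The one step where you take a genuinely different (and cleaner) route is the evaluation of $L(1,\eta_\E)$. The paper applies Dirichlet's class number formula simultaneously to $\zeta_\F$ and $\zeta_\E$, which forces it to compute the regulator ratio $R_\E/R_\F=2$ via \cite[Theorem 4.12 \& Proposition 4.16]{Wa82}, the root-of-unity ratio $\omega_\F/\omega_\E$ by classifying cyclotomic subfields, and the class number ratio $h_\E/h_\F$ via Herglotz's theorem together with the norm of the fundamental unit. You instead factor $L(s,\eta_\E)=\zeta_\E(s)/\zeta_\F(s)=L(s,\chi_{D_1})L(s,\chi_{D_2})$ directly from the biquadratic decomposition of $\zeta_\E$, then apply the class number formula to the two \emph{imaginary} quadratic $L$-values, which involves only $h$, $w$, and $\sqrt{|D|}$ and no regulator. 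Both routes rest on the same underlying identity $\zeta_\E=\zeta_\Q\,L(\chi_8)L(\chi_{D_1})L(\chi_{D_2})$, so the outcome is identical, but your packaging sidesteps the unit-group computations entirely. The remaining arithmetic --- the determination of $(\ord_{(\sqrt2)}(l),\eta_\E(\varpi_{(\sqrt2)}))$ as $(2,-1),(2,+1),(1,0)$ for $N\equiv3,7\pmod 8$ and $N\equiv1\pmod4$, the resulting local factors $5/2,1,3/2$, the archimedean collapse to $(\sqrt N/\pi)\cos((2k_v-1)\theta_v)$, and the final bookkeeping giving $40/3-(4/3)\prod_v\cos((2k_v-1)\pi/3)$ for $N=3$ --- all matches the paper.
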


	We turn to the case $\F = \Q[\sqrt{5}]$, for which any rational square-free integer $N \geq 2$ with $5 \nmid N$ is still square-free in the PID $\vo=\Z[(\sqrt{5}+1)/2]$. We do not give the details of proof since it is quite similar to that of Corollary \ref{MainQSq2}, but only point out a subtlety in the case of level $27$: the prime ideal $2$ is split in the quadratic extension $\F[\sqrt{-3}]/\F$, although it is inert in $\Q[\sqrt{-3}]/\Q$.
	
\begin{corollary}
	Let $N \geq 2$ be a square-free natural number with $5 \nmid N$. With the same notation as in Corollary \ref{MainQSq2}, we have:
\begin{itemize}
	\item[(1)] If $N > 3$, then we have
	$$ B(\vec{k},N^3) = \varphi_{\F}(N) h(-N) h(-5N) \cdot \left\{ \begin{matrix} \frac{1}{4} & \text{if } N \equiv 1,2 (4) \\ \frac{3}{2} & \text{if } N \equiv 3 (8) \\ 1 & \text{if } N \equiv 7 (8) \end{matrix} \right.. $$
	\item[(2)] If $N=2$, writing $\vec{k}=(k_1,k_2)$ then we have
	$$ B(\vec{k},8) = \left\{ \begin{matrix} 1 & \text{if } k_1,k_2 \equiv 0,1(4) \text{ or } 2,3 (4) \\ 2 & \text{otherwise} \end{matrix} \right.. $$
	\item[(3)] If $N=3$, writing $\vec{k}=(k_1,k_2)$ then we have
	$$ B(\vec{k},27) = \left\{ \begin{matrix} 4 & \text{if } k_1,k_2 \equiv 2 (3) \\ 5 & \text{if } k_1,k_2 \equiv 0,1 (3) \\ 6 & \text{otherwise} \end{matrix} \right.. $$
\end{itemize}
\end{corollary}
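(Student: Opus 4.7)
The plan is to mirror the proof of Corollary \ref{MainQSq2}, with modifications for $\F = \Q[\sqrt{5}]$. First I would apply Theorem \ref{MainGenF}, noting that $\F$ has narrow class number one and discriminant $D_\F = 5$, and that the fundamental unit $\epsilon = (1+\sqrt{5})/2$ has norm $-1$, so $U_+ = \langle \epsilon^2 \rangle = (\vo^{\times})^2$ and $U_2 = \{1\}$. Since $(N)$ remains square-free in the PID $\vo$, I would take $\idl{J} = \vo$ with totally positive generator $N$. The formula of Theorem \ref{MainGenF} then becomes a sum over $n \in \vo/\{\pm 1\}$ subject to $n^2 N < 4$ at both real places together with the ramification of $\F[\sqrt{(nN)^2 - 4N}]$ above $(N)$.

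Next I would enumerate the admissible $n$. For $N \geq 5$, the positivity constraint gives $|n|_v < 2/\sqrt{N}$ at each archimedean $v$, hence $|\Nr(n)| < 4/N < 1$, which forces $n = 0$; ramification at the odd prime above $N$ is then automatic, as in Corollary \ref{MainQSq2}. For $N = 2$ (inert in $\F$ since $5 \equiv 5 \pmod 8$) and $N = 3$ (inert since $5$ is a non-square mod $3$), a finite search in $\vo$ yields $n \in \{0, \pm 1\}$, with the ramification at $(N)$ verified locally (for $(2)$ with $n = \pm 1$, the identity $(1+i)^2 = 2i$ shows that $(2)$ ramifies in $\F[i]/\F$). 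For each admissible $n$, the quadratic extension $\E := \F[\sqrt{(nN)^2 - 4N}]$ is biquadratic over $\Q$, so its Dedekind zeta factorization combined with $\zeta_\F(s) = \zeta(s) L(s, \chi_5)$ gives
$$ L(s, \eta_{\E/\F}) = L(s, \chi_m) L(s, \chi_{m'}) $$
for suitable rational Dirichlet characters depending on $\E$; at $s = 1$, Dirichlet's class number formula produces the combination $h(-N) h(-5N)$ for $n = 0$ up to elementary factors. The archimedean factor at $n = 0$ equals $\sqrt{N}/\pi$ at each place (weight-independent since $inN = 0$); combining with $2\sqrt{5}$, the local factor $A(0, N) = \varphi_\F(N)$, and the residual Euler factors from (\ref{ZagLFVar}), a case analysis on $N \pmod 8$ yields the three stated scalars $\tfrac{1}{4}, \tfrac{3}{2}, 1$ in case (1). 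For $N \in \{2, 3\}$, the weight-dependence of $B(\vec{k}, 8)$ and $B(\vec{k}, 27)$ arises from the $n = \pm 1$ contributions through the Chebyshev-type factors $\Re\bigl((2+2i)^{1-2k_v}\bigr)$ and $\Re\bigl((\sqrt{3}+3i)^{1-2k_v}\bigr)$, whose periods $4$ and $3$ in $k_v$ match the case splits in the statement.

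The principal subtlety, singled out in the preamble, concerns $N = 3$: for both $n = 0$ and $n = \pm 1$, one has $\E = \F[\sqrt{-3}] = \Q[\sqrt{5}, \sqrt{-15}]$. Although $2$ is inert in both $\Q[\sqrt{5}]$ and $\Q[\sqrt{-3}]$, it \emph{splits} in $\Q[\sqrt{-15}]$ since $-15 \equiv 1 \pmod 8$; the biquadratic splitting data then force the inert prime $(2)$ of $\F$ to split in $\E/\F$, giving $\eta_\E(\varpi_{(2)}) = +1$. This modifies the Euler factor at $2$ in $L(1, \eta_{\E/\F})$ relative to the naive $\Q[\sqrt{2}]$ analog and must be tracked carefully in the final tally. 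I expect the hard part to be precisely this bookkeeping of local Euler factors at the residue-characteristic-$2$ prime, where the generic formulas available at odd primes must be replaced by a direct local computation using the splitting data of the relevant biquadratic extension.
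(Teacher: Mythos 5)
Your overall plan is correct and is exactly what the paper intends — apply Theorem \ref{MainGenF} with $\idl{J}=\vo$, $U_2=\{1\}$, enumerate the admissible $n$, and assemble the archimedean factors, $L(1,\eta_{\E/\F})$, and the residual Euler factors. Your route to $L(1,\eta_{\E/\F})$ via the biquadratic factorization $L(s,\eta_{\E/\F})=L(s,\chi_{D_1})L(s,\chi_{D_2})$ together with Dirichlet's class number formula for each factor is in fact cleaner than the paper's treatment of the $\Q[\sqrt{2}]$ analogue, which goes through Herglotz's class number relation and separate computations of $\omega_\E/\omega_\F$, $R_\E/R_\F$ and $|D_\E|$; the two routes agree, and yours avoids the unit bookkeeping.

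The genuine gap is the step you do not carry out: determining $D_{\E/\F}$, hence $l$, and then the Euler factor at $\vp_2=2\vo$ in $L^{(\idl{N})}(1,-4N;\vo)$. You write that a case analysis "yields the three stated scalars $\tfrac14,\tfrac32,1$", but the subtlety you correctly flag for $N=3$ applies verbatim to \emph{every} odd $N>3$ with $N\equiv 3\pmod 4$: since $2$ is inert in $\F=\Q[\sqrt 5]$, $\F_{\vp_2}$ is the unramified quadratic extension of $\Q_2$, in which $5$ is a square, and $-N/5\equiv 1\pmod 8$ when $N\equiv 3\pmod 8$ (while $-N\equiv 1\pmod 8$ already when $N\equiv 7\pmod 8$); hence $-N$ is a square in $\F_{\vp_2}$ and $\vp_2$ \emph{splits} in $\E/\F$ in both residue classes. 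With $D_{\E/\F}=N\vo$, $l=\vp_2$, $k=1$, $q=4$, $k_\vp=0$, the Euler factor at $s=\tfrac12$ is $(5-\eta_\E(\varpi_{\vp_2}))/4$, which equals $1$ for the split value $\eta=+1$, whereas the stated $\tfrac32$ requires $\eta=-1$ (inert). So the $N\equiv 3\pmod 8$ and $N\equiv 7\pmod 8$ cases collapse to the same constant $1$ (as a cross-check, an Euler factor of $\tfrac32$ at $\vp_2$ would render $B(\vec k,27)$ non-integral). Either you and the paper are both missing an additional contribution, or the paper's $\tfrac32$ is an error that the very remark you quote about $N=3$ should have led you to catch; in any event your assertion that the enumeration "yields the three stated scalars" is unsupported and, as stated, appears to be wrong for $N\equiv 3\pmod 8$.
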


	\subsection{Notation and Convention}
	
		\subsubsection{Complex Analytic Notation}
	
	If $f$ is a meromorphic function around $s=s_0$, we denote the coefficients in its Laurent expansion by
	$$ f(s) = \sideset{}{_{-\infty < k < 0}} \sum \frac{f^{(k)}(s_0)}{(-k)!} (s-s_0)^k + \sideset{}{_{k \geq 0}} \sum \frac{f^{(k)}(s_0)}{k!} (s-s_0)^k. $$

		\subsubsection{Number Theoretic Notation}
		
	Throughout the paper, $\F$ is a (fixed) totally real number field with ring of integers $\vo$ and of degree $d=[\F : \Q]$. Let its absolute different (resp. discriminant) be $\idl{D}_{\F}$ (resp. $D_{\F}$). $V_{\F}$ (resp. $V_{\infty}$) denotes the set of places (resp. infinite places) of $\F$ and for any $v \in V_{\F}$, $\F_v$ is the completion of $\F$ with respect to the absolute value $\norm_v$ corresponding to $v$. $\A = \A_\F$ is the ring of adeles of $\F$, while $\A^{\times}$ denotes the group of ideles. We often write a finite place by $\vp$, which also denotes its corresponding prime ideal of $\vo$.
	
	We put the standard Tamagawa measure $dx = \sideset{}{_v} \prod dx_v$ on $\A$ (resp. $d^{\times}x = \sideset{}{_v} \prod d^{\times}x_v$ on $\A^{\times}$). We recall their constructions. Let $\Tr = \Tr_{\Q}^{\F}$ be the trace map, extended to $\A \to \A_{\Q}$. Let $\psi_{\Q}$ be the additive character of $\A_{\Q}$ trivial on $\Q$, restricting to the infinite place as
	$$ \Q_{\infty} = \R \to \C^{(1)}, \quad x \mapsto e^{2\pi i x}. $$
	We put $\psi = \psi_{\Q} \circ \Tr$, which decomposes as $\psi(x) = \sideset{}{_v} \prod \psi_v(x_v)$ for $x=(x_v)_v \in \A$. $dx_v$ is the additive Haar measure on $\F_v$, self-dual with respect to $\psi_v$. Precisely, if $\F_v = \R$, then $dx_v$ is the usual Lebesgue measure on $\R$; if $v = \vp < \infty$ such that $\vo_{\vp}$ is the valuation ring of $\F_{\vp}$ with prime ideal $\vp \vo_{\vp}$ and a fixed uniformizer $\varpi_{\vp}$, then $dx_{\vp}$ gives $\vo_{\vp}$ the mass $D_{\vp}^{-1/2}$, where $D_{\vp}$ is the local component at $\vp$ of the discriminant $D_\F$ of $\F/\Q$ such that $D_\F = \sideset{}{_{\vp < \infty}} \prod D_{\vp}$. Consequently, the quotient space $\F \backslash \A$ with the above measure quotient by the discrete measure on $\F$ admits the total mass $1$ \cite[Ch.\Rmnum{14} Prop.7]{Lan03}. Recall the local zeta-functions: if $\F_v = \R$, then $\zeta_v(s) = \Gamma_{\R}(s) = \pi^{-s/2} \Gamma(s/2)$; if $v=\vp < \infty$ then $\zeta_{\vp}(s) = (1-q_{\vp}^{-s})^{-1}$, where $q_{\vp} := \Nr(\vp)$ is the cardinality of $\vo/\vp$. We then define
	$$ d^{\times} x_v := \zeta_v(1) \frac{dx_v}{\norm[x]_v}. $$
	In particular, $\Vol(\vo_{\vp}^{\times}, d^{\times}x_{\vp}) = \Vol(\vo_{\vp}, dx_{\vp})$ for $\vp < \infty$. 

\noindent The complete Dedekind zeta-function satisfies the functional equation
	$$ \Lambda_{\F}(s) := \sideset{}{_{v \in V_{\F}}} \prod \zeta_v(s) = D_\F^{\frac{1}{2}-s} \Lambda_{\F}(1-s). $$
	
		\subsubsection{Automorphic Representation Theoretic Notation}
		
	We will work on algebraic groups $\GL_2$ and $\PGL_2$ over $\F$, the latter being the quotient of $\GL_2$ by its center over $\A$ or $\F_v$ in the category of locally compact groups. We put the \emph{hyperbolic measure} instead of the Tamagawa measure on $\GL_2$. We recall its definition. We pick the standard maximal connected compact subgroup $\gp{K} = \sideset{}{_v} \prod \gp{K}_v$ of $\GL_2(\A)$ by
	$$ \gp{K}_v = \left\{ \begin{matrix} \SO_2(\R) & \text{if } \F_v = \R \\ \GL_2(\vo_{\vp}) & \text{if } v = \vp < \infty \end{matrix} \right. , $$
and equip it with the Haar probability measure $d\kappa_v$. We define the following one-parameter algebraic subgroups of $\GL_2(\F_v)$
	$$ \gp{Z}_v = \gp{Z}(\F_v) = \left\{ z(u) := \begin{pmatrix} u & 0 \\ 0 & u \end{pmatrix} \ \middle| \ u \in \F_v^{\times} \right\}, $$
	$$ \gp{N}_v = \gp{N}(\F_v) = \left\{ n(x) := \begin{pmatrix} 1 & x \\ 0 & 1 \end{pmatrix} \ \middle| \ x \in \F_v \right\}, $$
	$$ \gp{A}_v = \gp{A}(\F_v) = \left\{ a(y) := \begin{pmatrix} y & 0 \\ 0 & 1 \end{pmatrix} \ \middle| \ y \in \F_v^{\times} \right\}, $$
and equip them with the Haar measures on $\F_v^{\times}, \F_v, \F_v^{\times}$ respectively. The hyperbolic Haar measure $dg_v$ on $\GL_2(\F_v)$ is the push-forward of the product measure $d^{\times}u \cdot dx \cdot d^{\times}y / \norm[y]_v \cdot d\kappa_v$ under the Iwasawa decomposition map
	$$ \gp{Z}_v \times \gp{N}_v \times \gp{A}_v \times \gp{K}_v \to \GL_2(\F_v), \quad (z(u), n(x), a(y), \kappa) \mapsto z(u) n(x) a(y) \kappa. $$
	At every place $v$, we define a \emph{height function} 
	$$ \Ht_v: \gp{Z}(\F_v) \backslash \GL_2(\F_v) / \gp{K}_v \to \R_{>0}, \quad \begin{pmatrix} t_1 & x \\ 0 & t_2 \end{pmatrix} \kappa \mapsto \extnorm{\frac{t_1}{t_2}}_v. $$
	Their tensor product $\Ht(g) := \sideset{}{_v} \prod \Ht_v(g_v)$ for $g = (g_v)_v \in \GL_2(\A)$ is the height function on $\GL_2(\A)$.
	
\begin{remark}
	Note that $\Vol(\GL_2(\vo_{\vp})) = D_{\vp}^{-\frac{3}{2}}$ for this measure.
\label{MaxCpMeas}
\end{remark}

\noindent Similarly, the hyperbolic Haar measure $d\bar{g}_v$ on $\PGL_2(\F_v)$ is the push-forward of the product measure $dx \cdot d^{\times}y / \norm[y]_v \cdot d\kappa_v$ under the composition map
	$$ \gp{N}_v \times \gp{A}_v \times \gp{K}_v \to \GL_2(\F_v) \to \PGL_2(\F_v), \quad (n(x), a(y), \kappa) \mapsto [n(x) a(y) \kappa]. $$
	We then define and equip the quotient space
	$$ [\PGL_2] := \gp{Z}(\A) \GL_2(\F) \backslash \GL_2(\A) = \PGL_2(\F) \backslash \PGL_2(\A) $$
with the product measure $d\bar{g} := \sideset{}{_v} \prod d\bar{g}_v$ on $\PGL_2(\A)$ quotient by the discrete measure on $\PGL_2(\F)$. 

	
	Consider $\F_v \in \{ \R,\C \}$. On $\PGL_2(\F_v)$, we define a norm by
	$$ \extNorm{\begin{pmatrix} x_1 & x_2 \\ x_3 & x_4 \end{pmatrix}} := \frac{\sum_{i=1}^4 \norm[x_i]^2}{\norm[x_1x_4-x_2x_3]} + \frac{\norm[x_1x_4-x_2x_3]}{\sum_{i=1}^4 \norm[x_i]^2}. $$
	Recall (see \cite[\S 7.1.2]{Wal88}) that a Schwartz function $\phi \in \Sch(\PGL_2(\F_v))$ is a smooth one such that for any $X, Y$ in the enveloping algebra of the complexified Lie algebra of $\GL_2(\F_v)$ and any $r \geq 0$
	$$ \sup_{g \in \PGL_2(\F_v)} \Norm[g]^r \extnorm{ \rpL(X) \rpR(Y) \phi (g) } < +\infty, $$
	where $\rpL$ and $\rpR$ are the left and right translation of $\PGL_2(\F_v)$ respectively. We define $\Sch(\PGL_2(\A_{\infty})) = \widehat{\otimes}_{v \mid \infty} \Sch(\PGL_2(\F_v))$ to be the topological completion of the tensor product. 
	
	We denote by $\RamC$ any constant towards the Ramanujan-Petersson conjecture for $\GL_2$. The current record $\RamC = 7/64$ is due to Kim-Sarnak \cite{KS02} over $\Q$ and Blomer-Brumley \cite{BB11} over general number fields.

\section{A Spectral Reciprocity for Trace Formula}

	\subsection{Spectral Decomposition and Fourier Inversion}
	
	In automorphic representation theory, the terminology ``spectral decomposition'' seems to have different meaning for different authors. In a former work \cite[\S 1.2 \& 1.3]{W17}, the third author tried to clarify this terminology as well as its difference with ``Fourier inversion''. In this subsection, we continue the discussion on this topic.
	
	Let $\gp{G}$ be a locally compact group, whose unitary dual $\widehat{\gp{G}}$ is equipped with the Fell topology. Let $(\rpR,V_{\rpR})$ be a unitary representation of $\gp{G}$ in a Hilbert space $V_{\rpR}$ with inner product $\Pairing{\cdot}{\cdot}_{\rpR}$. The \emph{spectral decomposition of the pairing} $\Pairing{\cdot}{\cdot}_{\rpR}$ or the \emph{Plancherel formula} is to find and establish:
	
\begin{itemize}
	\item[(1)] A Borel measure $d\mu=d\mu_{\rpR}$ on $\widehat{\gp{G}}$, called the \emph{Plancherel measure} for $\rpR$;
	\item[(2)] For each $\pi \in \widehat{\gp{G}}$ in the support of $d\mu$, a $\gp{G}$-intertwiner $\ProjF_{\pi}: V_{\rpR} \to V_{\pi}$ called the \emph{spectral projectors}, where $V_{\pi}$ equipped with its canonical norm $\Pairing{\cdot}{\cdot}_{\pi}$ is a (possibly infinite) direct sum of $\pi$, called the \emph{$\pi$-isotypic component of $\rpR$};
	\item[(3)] For every $v \in V_{\rpR}$, $\ProjF_{\pi}(v)$ is well-defined for $\pi$ outside a set with $d\mu$-measure $0$, and we have 
\begin{equation} 
	\Pairing{v_1}{v_2}_{\rpR} = \int_{\widehat{\gp{G}}} \Pairing{\ProjF_{\pi}(v_1)}{\ProjF_{\pi}(v_2)}_{\pi} d\mu(\pi). 
\label{PlancherelF}
\end{equation}
\end{itemize}
	
\noindent The pairing $\Pairing{\cdot}{\cdot}_{\rpR}$ is the natural one between $V_{\rpR}$ and its topological dual. For any (dense) subspace $V \subset V_{\rpR}$ with a finer topology (for example $V=V_{\rpR}^{\infty}$ can be the subspace of smooth vectors if $\gp{G}$ is a Lie group), and any functional $\ell \in V^{\vee}$ in the topological dual of $V$, one may ask for an analogue of (\ref{PlancherelF}) for the natural pairing $\Pairing{v}{\ell}$ for $v \in V$. In this case, we call the obvious adaptation of the above formalism of Plancherel formula leading to (\ref{PlancherelF}) the \emph{spectral decomposition of the pairing $\Pairing{v}{\ell}$ for $v \in V$}.

	Let $X$ be a topological space on which $\gp{G}$ acts from the right. Let $dx$ be a $\gp{G}$-invariant measure on $X$. We are now interested in the special case where $\rpR$ is the right translation on $V_{\rpR}=\intL^2(X,dx)$. In this special case, people are interested in the \emph{existence} of the adjoint operator $\ProjF_{\pi}^*$ of $\ProjF_{\pi}$ \emph{formally} satisfying
	$$ \Pairing{\ProjF_{\pi}^* \ProjF_{\pi}(v_1)}{v_2}_{\rpR} = \Pairing{\ProjF_{\pi}(v_1)}{\ProjF_{\pi}(v_2)}_{\pi}. $$
	If $d\mu$ is discrete at $\pi$, namely if $\pi$ appears as subrepresentations of $\rpR$, this adjoint operator $\ProjF_{\pi}^*$ may be simply regarded as the inclusion map. In general, both the domain and the image of $\ProjF_{\pi}^*$ are subtle questions. In practice, both $X$ and $\gp{G}$ often admit some smooth structures with which the $\gp{G}$ action is compatible. The smooth structure of $\gp{G}$ allows us to define subspace of smooth vectors $V_{\rpR}^{\infty}$ or $V_{\pi}^{\infty}$. A suitable version of Dixmier-Malliavin's theorem would imply $\ProjF_{\pi}(V_{\rpR}^{\infty}) \subset V_{\pi}^{\infty}$.
	
\begin{example}
	If $\gp{G}$ is a Lie group, we have differential operators associated to the Lie algebra of $\gp{G}$ and the meaning of a smooth vector is clear. If $\gp{G}$ is totally disconnected, a smooth vector admits an open subgroup of $\gp{G}$ as its stabilizer group.
\end{example}

\noindent During the establishment of (\ref{PlancherelF}), we naturally already obtain a version of 
	$$ \ProjF_{\pi}^*: V_{\pi}^{\infty} \to \Cont^{\infty}(X), $$
	as well as the validity of the formula
\begin{equation}
	v(x) = \int_{\widehat{\gp{G}}} \left(\ProjF_{\pi}^* \ProjF_{\pi} v\right)(x) d\mu(\pi), \quad v \in V_0
\label{FourInvF}
\end{equation}
	for a sufficiently large subspace $V_0 \subset V_{\rpR}^{\infty}$ with some strong convergence for $x \in X$ (normal or uniform), and possibly an estimation of the dominant of the right hand side as $x \in X$ varies:
\begin{equation}
	\int_{\widehat{\gp{G}}} \extnorm{\left(\ProjF_{\pi}^* \ProjF_{\pi} v\right)(x)} d\mu(\pi).
\label{DomFourInvF}
\end{equation} 
	
\begin{example}
	For $\gp{G}=\R$ and $X = \Z \backslash \R$ (resp. $\R$), we are in the setting of the classical Fourier analysis. The dual $\widehat{\R}=\R$, hence we may parametrize $(\pi,V_{\pi}) = (\pi_t, \C e_t)$ with $t \in \R$ and $\pi_t(x)e_t = e^{2\pi i xt} e_t$. We have $\ProjF_{\pi_t}^*(e_t)(x)=e^{2\pi i t x}$. We may take $V_0=\Cont^{\infty}(\Z \backslash \R)$ (resp. $V_0=\Sch(\R)$ the space of Schwartz functions, or its subspace of Gaussian type functions $P(x)e^{-ax^2}$ for polynomials $P$ and $a > 0$).
\end{example}

\begin{definition}
	We call the formula (\ref{FourInvF}) together with an estimation of the dominant integral (\ref{DomFourInvF}), as well as any extension of their validity to a larger subspace $V_0 \subset V \subset V_{\rpR}$, a \emph{Fourier inversion formula}. If $X = \gp{G}(\F) \backslash \gp{G}(\A)$ is the underlying space for automorphic forms for a reductive group $\gp{G}$, we also call (\ref{FourInvF})+(\ref{DomFourInvF}) an \emph{automorphic Fourier inversion} for $V$.
\end{definition}

\begin{remark}
	The formula (\ref{FourInvF}) for $V$ alone is also the spectral decomposition of the pairing $\Pairing{v}{\delta_x}$ for $v \in V$ and the Dirac mass $\delta_x$. This situation holds a particular status, because the spectral decomposition of many other interesting pairings, for example pairings making use of the Gan-Gross-Prasad conjectures in the automorphic setting, would be established based on applying this one together with Fubini's theorem, for which the estimation of (\ref{DomFourInvF}) is important. For this reason, we have added the estimation of (\ref{DomFourInvF}) in the definition of a Fourier inversion to distinguish it from spectral decomposition.
\end{remark}

\begin{remark}
	It seems that the spectral decomposition in the automorphic setting considered in the work of Moeglin-Waldspurger \cite[\S \Rmnum{6}]{MW95} only treats the Plancherel formula (\ref{PlancherelF}). In another source \cite[Theorem 1.1]{CP90}, where the expression ``spectral decomposition'' appeared explicitly, it seems also to only concern (\ref{PlancherelF}), although the subsequent \cite[Proposition 1.4]{CP90} looks like an attempt of establishing an automorphic Fourier inversion (\ref{FourInvF}) without a clear mention of $\ProjF_{\pi}^*$ and with an incomplete proof (see \cite[\S 4]{W17} for more details). Moreover, one can not say, as in the proof of \cite[Proposition 1.4]{CP90} that (\ref{FourInvF}) holds in the sense of $\intL^2$ for $v \in V_{\rpR}^{\infty}$ directly from (\ref{PlancherelF}), because one does not \emph{a priori} know if (partial integral version of) the right hand side of (\ref{FourInvF}) lies in $\intL^2(X,dx)$.
\end{remark}

\begin{remark}
	There are other authors who understand ``spectral decomposition'' as the spectral resolution of the hyperbolic Laplacian in the classical setting of Modular forms on the upper half plane (see \cite[\S 4 \& 7]{Iw02}). The statement of the relevant theorems does correspond to what we mean by automorphic Fourier inversion here. But the viewpoint does not suit well for the representation theory. For example, the decomposition according to the Hecke operators, i.e., the finite components of an automorphic representation, is not reflected in this viewpoint.
\end{remark}

\begin{remark}
	Our terminology ``automorphic Fourier inversion'' also reminds us that the state-of-art of such formulas in the automorphic setting has not reached the same depth as in the classical Fourier analysis in $\intL^2(\R)$. Hence problems towards this direction are open.
\end{remark}

    \subsection{Automorphic Fourier Inversion and Pre-trace Formula}
    
    The third author studied the automorphic Fourier inversion for $\GL_2$ over general number fields in the former works \cite[\S 2.6]{W14}, \cite{W17} and \cite[Theorem 2.3]{Wu11}. We recall the special case for $\PGL_2$. Let $(\rpR,V_{\rpR}) = (\rpR, \intL^2([\PGL_2], d\bar{g}))$ be the right regular representation.
    
\begin{definition}
	Let $\varphi \in V_{\rpR}^{\infty}$ be a smooth vector represented by a smooth function on $\GL_2(\A)$. If for any $X$ in the universal enveloping algebra of the Lie algebra of $\GL_2(\A_{\infty})$, we have
	$$ \extnorm{ \rpR(X).\varphi(g) } \ll_{\epsilon} \Ht(g)^{1/2-\epsilon} $$
	for any $\epsilon > 0$ sufficiently small, uniformly in $g$ lying in a/any \emph{Siegel domain}, then we call $\varphi$ ``nice''.
\label{NiceFDef}
\end{definition}

\begin{theorem}{(\emph{automorphic Fourier inversion formula})}
	For ``nice'' $\varphi \in V_{\rpR}^{\infty}$, we have a decomposition 
\begin{align*}
	\varphi(g) &= \sideset{}{_{\pi \text{ cuspidal}}} \sum \sideset{}{_{e \in \Bas(\pi)}} \sum \Pairing{\varphi}{e} e(g) \\
	&+ \sum_{\chi \in \widehat{\R_+ \F^{\times} \backslash \A^{\times}}} \sum_{f \in \Bas(\chi,\chi^{-1})} \int_{-\infty}^{\infty} \Pairing{\varphi}{\eis(i\tau,f)} \eis(i\tau,f)(g) \frac{d\tau}{4\pi} \\
	&+ \frac{1}{\Vol([\PGL_2])} \sideset{}{_{\substack{ \chi \in \widehat{\F^{\times} \backslash \A^{\times}} \\ \chi^2 = 1 }}} \sum \int_{[\PGL_2]} \varphi(x) \overline{\chi(\det x)} dx \cdot \chi(\det g)
\end{align*}
	with normal convergence in $ [\PGL_2] $. Here, $\Bas(\pi)$ (resp. $\Bas(\chi,\chi^{-1})$) is an orthonormal basis of $\gp{K}_{\infty}$-isotypic and $\gp{K}_{\fin}$-finite vectors of $V_{\pi}$ (resp. $V_{\chi,\chi^{-1}}$) and the \emph{automorphic Fourier coefficients}, namely the above terms written via inner product such as $\Pairing{\varphi}{\eis(i\tau,f)}$, are given by the usual convergent integrals. Moreover, the dominant of the cuspidal part, namely 
	$$ \sideset{}{_{\pi \text{ cuspidal}}} \sum \sideset{}{_{e \in \Bas(\pi)}} \sum \extnorm{\Pairing{\varphi}{e} e(g)} $$
	uniformly converges in any Siegel domain and is of rapid decay with respect to $\Ht(g) \to \infty$.
\label{AutoFourInv}
\end{theorem}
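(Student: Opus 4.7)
The plan is to upgrade the abstract $\intL^2$-Plancherel decomposition of $(\rpR, V_{\rpR})$ into the pointwise, normally convergent identity announced in the theorem, by exploiting both the smoothness of $\varphi$ and its polynomial growth bound $\Ht(g)^{1/2-\epsilon}$. First I would recall the standard $\intL^2$-decomposition of $\intL^2([\PGL_2])$ into the cuspidal subspace, the one-dimensional residual subspace (finite sum over quadratic id\`ele class characters $\chi$ with $\chi^2=1$, arising because we work on $[\PGL_2]$ rather than on $\GL_2$), and the continuous spectrum coming from unitary Eisenstein series $\eis(i\tau,f)$ induced from $(\chi,\chi^{-1})$. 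The residual part is trivial: each summand is a constant multiple of $\chi\circ\det$, and the inner product $\Pairing{\varphi}{\chi\circ\det}$ makes sense since $\varphi$ is of moderate growth and $\chi\circ\det$ is bounded, giving the last line verbatim.

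Next I would pass from $\intL^2$-convergence to normal convergence by a Sobolev-type argument on $\GL_2(\A_{\infty})$. The idea is to replace $\varphi$ by $(1+\Delta)^{N} \varphi$ for $\Delta$ a suitable elliptic element in the enveloping algebra of the Lie algebra of $\GL_2(\A_{\infty})$ (e.g.\ $\Delta = C - \kappa$ for $C$ the Casimir plus a large positive constant, acting diagonally through the archimedean infinitesimal characters). Because $\varphi$ is \emph{nice} in the sense of Definition \ref{NiceFDef}, $(1+\Delta)^N \varphi$ still satisfies $\extnorm{\rpR(X)(1+\Delta)^N\varphi(g)} \ll \Ht(g)^{1/2-\epsilon}$ for every $X$, so it is in $\intL^2([\PGL_2])$; applying the Plancherel formula to $(1+\Delta)^N\varphi$ and then dividing back spectrally by the eigenvalues of $(1+\Delta)^N$ yields the rapid decay in the spectral parameter needed for pointwise absolute convergence. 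The same trick on the $\gp{K}_{\fin}$ side (by inserting idempotents on $\gp{K}_{\fin}$-types) together with Weyl-law/conductor growth estimates gives summability over $\Bas(\pi)$ and $\Bas(\chi,\chi^{-1})$, provided we only keep $\gp{K}_{\infty}$-isotypic, $\gp{K}_{\fin}$-finite orthonormal bases as in the statement.

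For the pointwise identification of the three spectral pieces I would proceed termwise. For the cuspidal part, the cusp forms $e$ are of rapid decay in a Siegel domain, and the moderate bound on $\varphi$ allows $\Pairing{\varphi}{e}$ to be computed as the absolutely convergent integral over $[\PGL_2]$; combined with the spectral-parameter decay obtained from the Sobolev step and a Weyl-type bound on the number of cusp forms in bounded spectral windows, one obtains both normal convergence on any Siegel domain and rapid decay in $\Ht(g)$. For the Eisenstein part, the inner product $\Pairing{\varphi}{\eis(i\tau,f)}$ is made absolutely convergent by unfolding against the constant term of the Eisenstein series (or by regularization \`a la Zagier), using crucially the exponent $1/2-\epsilon$ in the growth of $\varphi$, which lies just below the critical exponent $1/2$ appearing in the leading behaviour of $\eis(i\tau,f)$; this is exactly where the choice of ``nice'' becomes sharp. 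Normal convergence of the $\tau$-integral and of the sum over $\chi$ and $\Bas(\chi,\chi^{-1})$ then follows from the Sobolev reduction together with standard pointwise bounds for Eisenstein series on Siegel domains (polynomial in $\tau$ and the analytic conductor of $\chi$).

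I expect the main obstacle to be the Eisenstein piece: one must simultaneously justify the convergence of the inner product defining $\Pairing{\varphi}{\eis(i\tau,f)}$, its rapid decay in $\tau$ (by moving the line of integration to exploit the holomorphy of the Eisenstein series away from $\Re s = 1/2$, absorbing the polar contribution into the residual term), and the uniform boundedness on $g$-side of $\eis(i\tau,f)(g)$, all while keeping the estimates uniform across the sum over $\chi$ and $\Bas(\chi,\chi^{-1})$. The cuspidal and residual contributions, by contrast, reduce essentially to spectral-parameter summation controlled by Sobolev norms. Since the corresponding statement is already established in the third author's works \cite{W14, W17, Wu11} in greater generality, my plan is to recall those arguments in the $\PGL_2$ setting and verify that the bound in Definition \ref{NiceFDef} suffices to feed into them.
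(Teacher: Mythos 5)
The paper does not actually give an inline proof of this theorem: immediately before the statement it says the result is ``recalled'' from the third author's earlier works \cite[\S 2.6]{W14}, \cite{W17}, \cite[Theorem~2.3]{Wu11}, and the only glimpse of the argument appears inside the proof of Theorem~\ref{AutoKernSpecDSch}, where it is stated that the proof in \cite[\S 2.6]{W14} is ``based on some local Sobolev type estimation of the Whittaker functions,'' yielding pointwise bounds of the form $\norm[\phi_j(x)] \ll_N \lambda_{\phi_j,\infty}^d \Ht(x)^{-N}$. Your closing paragraph, which defers to the same references, therefore matches the paper's treatment in spirit. However, your sketch of what that proof should look like differs in emphasis from the approach the paper actually invokes. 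You propose an abstract group-side Sobolev argument (apply Plancherel to $(1+\Delta)^N\varphi$, divide back by eigenvalues), whereas the cited proof works through the Whittaker--Fourier expansion at the cusp and proves local Sobolev estimates on the Whittaker functions; this is what simultaneously delivers the $\lambda$-polynomial, $\Ht$-rapidly-decaying pointwise bound on each $e(g)$ that is needed for normal convergence and for the ``rapid decay'' clause. Your sketch never explicitly produces such a quantitative $\lambda$-dependent bound on $\norm[e(g)]$; it only gives decay of $\Pairing{\varphi}{e}$ in $\lambda$ plus Weyl-law counting, which is not by itself enough to conclude rapid decay of the majorant in $\Ht(g)$.

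One further point to repair in the Eisenstein discussion: the rapid decay in $\tau$ of $\Pairing{\varphi}{\eis(i\tau,f)}$ is again obtained by integration by parts against the Casimir (the same Sobolev mechanism as in the cuspidal part), not by shifting the contour in the $s$-aspect; and the residual term in the theorem is a fixed, separate piece of the $L^2$-decomposition of $[\PGL_2]$ (the one-dimensional subspaces $\chi\circ\det$), not a residue one ``absorbs'' while moving a contour. Conflating those two mechanisms would lead you astray if you tried to execute the sketch. With these adjustments (and with the Whittaker-Sobolev pointwise estimate supplied), the outline is workable and consistent with what the cited works establish.
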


	Let $\Norm[x]$ for $x \in [\PGL_2]$ be a metric which defines the Schwartz function space $\Sch([\PGL_2])$ in the sense of Wallach. For example, define $\Norm[\PGL_2(\F)g] = \Ht(g)$ for $g \in \PGL_2(\A)$ lying in a fixed fundamental domain contained in a standard Siegel domain (as small as possible) and let
    $$ \Sch([\PGL_2]) = \left\{ f \in \Cont^{\infty}([\PGL_2]) \ \middle| \ \forall N \in \N, X \in U(\mathfrak{g}), \extnorm{\rpR(X).f(g)} \ll_N \Ht(g)^{-N} \right\}, $$
    where $U(\mathfrak{g})$ is the univsersal enveloping algebra of the complexified Lie algebra of $\PGL_2(\A_{\infty})$. Then any $\varphi \in \Sch([\PGL_2])$ is obviously ``nice''. Let $f \in \Sch(\PGL_2(\A)) := \Sch(\PGL_2(\A_{\infty})) \otimes \Cont_c^{\infty}(\PGL_2(\A_{\fin}))$ and form the automorphic kernel function $K=K_f : [\PGL_2]^2 \to \C$ associated with $f$
\begin{equation}
	K(x,y) := \sum_{\gamma \in \PGL_2(\F)} f(x^{-1} \gamma y).
\label{AutoKern}
\end{equation} 
    Then for a fixed variable, say $x$, the function in $y$ of $K(x,y)$ is in $\Sch([\PGL_2])$. We may apply Theorem \ref{AutoFourInv} and obtain its automorphic Fourier inversion

\begin{align} 
    K(x,y) &= \sum_{\pi \text{ cuspidal}} \sum_{\phi \in \Bas(\pi)} \rpR(f)\phi(x) \overline{\phi(y)} + \label{AutoKernSpecD} \\
    &\quad \sum_{\chi \in \widehat{\R_+ \F^{\times} \backslash \A^{\times}}} \sum_{e \in \Bas(\chi,\chi^{-1})} \int_{-\infty}^{\infty} \rpR(f)\eis(i\tau,e)(x) \overline{\eis(i\tau,e)(y)} \frac{d\tau}{4\pi} + \nonumber \\
	&\quad \frac{1}{\Vol([\PGL_2])} \sideset{}{_{\substack{ \chi \in \widehat{\F^{\times} \backslash \A^{\times}} \\ \chi^2 = 1 }}} \sum \int_{\PGL_2(\A)} f(g) \chi(\det g) dg \cdot \chi(\det x) \overline{\chi(\det y)}. \nonumber
\end{align}

\begin{theorem}
    For $f \in \Sch(\PGL_2(\A))$ with the automorphic kernel function $K(x,y)$ defined by (\ref{AutoKern}), the equation (\ref{AutoKernSpecD}) holds with normal convergence in $(x,y) \in [\PGL_2]^2$. Moreover, the cuspidal part of $K(x,y)$
    $$ K_0(x,y) := \sum_{\pi} \sum_{\phi \in \Bas(\pi)} \rpR(f)\phi(x) \overline{\phi(y)} $$
    as well as its dominant are rapidly decreasing functions in $(x,y) \in [\PGL_2]^2$, and the right hand side converges absolutely and uniformly in any Siegel domain.
\label{AutoKernSpecDSch}
\end{theorem}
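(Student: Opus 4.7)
The plan is to derive Theorem \ref{AutoKernSpecDSch} from the pointwise automorphic Fourier inversion Theorem \ref{AutoFourInv} applied to the section $y \mapsto K_f(x,y)$, and then upgrade pointwise estimates to joint estimates on $[\PGL_2]^2$ via the symmetry $K_f(x,y) = \overline{K_{f^*}(y,x)}$, where $f^*(g) := \overline{f(g^{-1})}$ is again in $\Sch(\PGL_2(\A))$.

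First I would verify that for every fixed $x \in [\PGL_2]$ the section $\varphi_x := K_f(x,\cdot)$ lies in $\Sch([\PGL_2])$, and in particular is ``nice'' in the sense of Definition \ref{NiceFDef}, with seminorms depending on $x$ only through a fixed polynomial factor of $\Ht(x)$ on Siegel domains. This is a standard reduction-theory argument: decomposing $f = \sum f_\infty^{(i)} \otimes f_\fin^{(i)}$ with $f_\fin^{(i)}$ compactly supported makes the sum $K_f(x,y) = \sum_{\gamma \in \PGL_2(\F)} f(x^{-1}\gamma y)$ locally finite on $[\PGL_2]^2$, giving joint smoothness for free. The Schwartz decay of $f_\infty^{(i)}$ absorbs the polynomial growth in $\Ht(x)\Ht(y)$ of the counting function for $\gamma$ with $x^{-1}\gamma y$ in prescribed compact sets, yielding for every $X \in U(\mathfrak{g})$ and every $N \in \N$ an estimate
\begin{equation*}
	\extnorm{\bigl(\rpR(X).K_f(x,\cdot)\bigr)(y)} \ll_{N,X} \Ht(x)^{A(N,X)} \Ht(y)^{-N}
\end{equation*}
on Siegel domains.

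Applying Theorem \ref{AutoFourInv} to $\varphi_x$ then yields the spectral expansion (\ref{AutoKernSpecD}) pointwise in $y$ for each fixed $x$, together with the rapid decay of the cuspidal piece in $y$. The Fourier coefficients match those of (\ref{AutoKernSpecD}) by the standard unfolding: for a cusp form $\phi \in \Bas(\pi)$,
\begin{equation*}
	\Pairing{\varphi_x}{\phi} = \int_{\PGL_2(\A)} f(x^{-1}y) \overline{\phi(y)}\, dy,
\end{equation*}
which one recognises as the $(x,\phi)$-matrix entry of the cuspidal projection of $\rpR(f)$; summing over $\phi$ gives the claimed cuspidal contribution $\rpR(f)\phi(x)\overline{\phi(y)}$. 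Analogous unfoldings produce the Eisenstein and one-dimensional pieces, all integrals being absolutely convergent thanks to the ``niceness'' of $\varphi_x$.

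The main obstacle — the content going beyond a direct application of Theorem \ref{AutoFourInv} — is upgrading the pointwise-in-$y$ rapid decay (with a polynomial loss in $\Ht(x)$) to genuinely joint rapid decay in $(x,y)$ on Siegel domains of $[\PGL_2]^2$. Here I would invoke the involution $f \mapsto f^*$: since $\rpR(f)^* = \rpR(f^*)$, one has $K_f(x,y) = \overline{K_{f^*}(y,x)}$, and the uniqueness of the spectral expansion identifies the corresponding cuspidal projectors. Running Steps~1 and~2 for $f^*$ with the roles of $x$ and $y$ exchanged produces the symmetric estimate, and a Cauchy--Schwarz bound in $\phi \in \Bas(\pi)$ — controlling $\sum_\phi |\rpR(f)\phi(x)||\phi(y)|$ by the geometric mean of the two unsymmetric estimates — eliminates the asymmetric polynomial factors and yields joint rapid decay of the cuspidal dominant on any Siegel domain of $[\PGL_2]^2$. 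The Eisenstein piece is handled identically, using in addition the standard polynomial bounds on Eisenstein series in the spectral parameter $\tau$, and the residual one-dimensional contribution is a finite sum requiring no estimation. This simultaneously delivers the normal convergence statement and the rapid decay of $K_0$ claimed in Theorem \ref{AutoKernSpecDSch}.
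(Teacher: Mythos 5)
Your route — apply the one-variable Fourier inversion (Theorem \ref{AutoFourInv}) to the section $\varphi_x := K_f(x,\cdot)$, then symmetrize — is not the paper's approach, and the final step has a gap that I do not think can be closed in the way you describe.

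The core obstruction is precisely what the paper's remark after the theorem records: $K(x,y)$ is \emph{not} of rapid decay in the two variables jointly. Concretely, on the diagonal of a Siegel domain one has $K(x,x)\gg \Ht(x)^{\kappa}$ for some $\kappa>0$ (e.g.\ the unipotent conjugacy class already contributes $\asymp\Ht(x)^{d}$ terms of bounded size when $f$ is supported near the identity). Consequently, in your Step~1 estimate
$\extnorm{(\rpR(X).K_f(x,\cdot))(y)}\ll_{N,X}\Ht(x)^{A(N,X)}\Ht(y)^{-N}$,
setting $y=x$ forces $A(N,X)\ge N+\kappa$. So the polynomial loss in $\Ht(x)$ is not ``a fixed polynomial factor''; it necessarily grows at least linearly with the decay exponent $N$ you are trying to extract. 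Your Step~4 is then doubly in trouble. First, the geometric-mean argument is not logically valid: the bound coming from $\varphi_x$ controls $S(x,y)=\sum_\phi|\rpR(f)\phi(x)||\phi(y)|$, while the bound coming from $\varphi_y$ for $f^*$ controls the \emph{different} quantity $T(x,y)=\sum_\phi|\rpR(f^*)\phi(y)||\phi(x)|$; there is no inequality $S\le\sqrt{ST}$ in general, and the identity $K_f(x,y)=\overline{K_{f^*}(y,x)}$ does not equate the dominants. Second, even granting the geometric-mean bound, with $A(N)\ge N+\kappa$ it yields $\sqrt{S(x,y)T(x,y)}\ll(\Ht(x)\Ht(y))^{(A(N)-N)/2}\gg(\Ht(x)\Ht(y))^{\kappa/2}$, which grows rather than decays. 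So the symmetrization cannot eliminate the loss; the large factor in $\Ht(x)$ is a genuine feature of the full kernel, not an artifact of the one-variable method.

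The paper's proof avoids this by never passing through a bound on $K(x,\cdot)$ at all: it estimates the cuspidal part directly. One expands $K_0(x,y)=\sum_{\pi}\sum_{\phi_1,\phi_2\in\Bas(\pi)}\Pairing{\rpR(f)\phi_2}{\phi_1}\phi_1(x)\overline{\phi_2(y)}$ and bounds each summand. The pointwise (Sobolev-type) estimate $\norm[\phi_j(x)]\ll_N\lambda_{\phi_j,\infty}^{d}\Ht(x)^{-N}$ gives joint rapid decay in $(x,y)$ at the price of a polynomial in the Casimir-type eigenvalues $\lambda_{\phi_j,\infty}$, and this loss is then absorbed by integrating by parts the matrix coefficient $\Pairing{\rpR(f)\phi_2}{\phi_1}$ against the elliptic operator $\Delta_\infty$ on both sides, which produces $\lambda_{\phi_1,\infty}^{-d_0}\lambda_{\phi_2,\infty}^{-d_0}$ for arbitrary $d_0$. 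Summability then follows from (a weak form of) Weyl's law. That integration-by-parts gain on the \emph{matrix coefficients}, as opposed to any symmetry between the two kernel variables, is the missing ingredient in your argument; if you want to salvage the one-variable approach you would need a substitute for it (for instance, a Bessel/trace-class bound of the form $\sum_{\phi_1,\phi_2}\extnorm{\Pairing{\rpR(f)\phi_2}{\phi_1}}\cdot\lambda_{\phi_1,\infty}^{d}\lambda_{\phi_2,\infty}^{d}<\infty$), at which point you are essentially reproducing the paper's estimate.
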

\begin{proof}
	Applying Theorem \ref{AutoFourInv} to $\rpR(f)\phi$, we may rewrite, first formally,
	$$ K_0(x,y) = \sum_{\pi} \sum_{\phi_1, \phi_2 \in \Bas(\pi)} \Pairing{\rpR(f)\phi_2}{\phi_1} \phi_1(x) \overline{\phi_2(y)}. $$
	The proof of Theorem \ref{AutoFourInv} given in \cite[\S 2.6]{W14} is based on some local Sobolev type estimation of the Whittaker functions, which implies that for any $N \in \Z_{\geq 0}$ there is $d=d(N) \in \Z_{\geq 0}$ such that
	$$ \norm[\phi_j(x)] \ll_N \lambda_{\phi_j,\infty}^d \Ht(x)^{-N}, $$
	where $\lambda_{\phi_j,\infty}$ is the eigenvalue of $\phi_j$ with respect to the elliptic operator 
	$$ \Delta_{\infty} = \sideset{}{_{v \mid \infty}} \prod (-\mathcal{C}_{\SL_2(\F_v)} - 2 \mathcal{C}_{\gp{K}_v}), $$
	and $\mathcal{C}_*$ is the Casimir element of the Lie group $*$. It follows that for any large and fixed $d_0 \in \Z_{\geq 0}$
	$$ \extnorm{\Pairing{\rpR(f)\phi_2}{\phi_1} \phi_1(x) \overline{\phi_2(y)}} \ll_N \int_{\PGL_2(\A)} \extnorm{\rpL(\Delta_{\infty}^{d+d_0}) \rpR(\Delta_{\infty}^{d+d_0}) f (g)} dg \cdot \lambda_{\phi_1,\infty}^{-d_0} \lambda_{\phi_2,\infty}^{-d_0} \cdot \Ht(x)^{-N} \Ht(y)^{-N}. $$
	Summing the right hand side and applying a (weak version of) Weyl's law, we justify the previous formal decomposition and conclude the desired rapid decay property.
\end{proof}

\begin{remark}
	$K(x,y)$ is not of rapid decay in the two variables $x$ and $y$, since 
	$$ \int_{[\PGL_2]} K(x,y) \eis(i\tau,e)(y) dy = \rpR(f) \eis(i\tau,e)(x) $$
	is not of rapid decay. Hence the decomposition (\ref{AutoKernSpecD}) is not an automorphic Fourier inversion in two variables. Thus Theorem \ref{AutoKernSpecDSch} is not a consequence of a two variable version of Theorem \ref{AutoFourInv}.
\end{remark}

\begin{remark}
	We need Theorem \ref{AutoKernSpecDSch} for the validity with Schwartz test functions of \emph{relative} trace formulae such as Jacquet-Zagier's trace formula. This should not be confused with the extension of the class of test functions in the trace formulae obtained by Finis, Lapid, M\"uller, Hoffman etc., such as \cite{FL11}. In particular, those results for trace formulae do not imply our pre-trace formula in Theorem \ref{AutoKernSpecDSch}, and can not be applied to our possible future work on the first moment of $L(1,\pi,\mathrm{Ad})$.
\end{remark}

\begin{remark}
	Careful readers may notice that the proof of automorphic Fourier inversion needs a weak version of the Weyl's law. If $f \in \Cont_c^{\infty}(\PGL_2(\A))$ is $\gp{K}$-finite for both left and right multiplication, then the decomposition of the associated kernel function $K(x,y)$ given in (\ref{AutoKernSpecD}) has another proof independent of Theorem \ref{AutoFourInv} or its $\gp{K}$-finite variant. This proof exploits a quantitative variant of the Dixmier-Malliavin's theorem due to Duflo-Labesse \cite[(\Rmnum{1}.1.11)]{DL71} and kernel functions associated with test functions of positive type. For a detailed treatment, see \cite[\S 6]{KL13}. Consequently, one can apply trace formulas for bi-$\gp{K}$-finite test functions in $\Cont_c^{\infty}(\PGL_2(\A))$, with extension to heat kernel to obtain the Weyl's law (see \cite{Pa12}). Hence invoking the Weyl's law in any automorphic Fourier inversion is not a circular reasoning. Alternatively, one may also use Donelly's work \cite{Do81, Do82} to establish a weak version of the Weyl's law, which does not involve trace formulae and suffices for the automorphic Fourier inversion.\footnote{We owe this clarification to Professor Werner M\"uller.}
\end{remark}

\subsection{Jacquet-Zagier's Trace Formula for $\mathrm{GL}_2$}

    Let $f \in \Sch(\PGL_2(\A))$ to which we associate the automorphic kernel function $K(x,y)$ by (\ref{AutoKern}) with its cuspidal part $K_0(x,y)$ given in Theorem \ref{AutoKernSpecDSch}. Let $\Phi \in \Sch(\A^2)$ and form the associated Eisenstein series as
	$$ \eis_{\Phi}(s,g) := \sideset{}{_{\gamma \in \gp{B}(\F) \backslash \GL_2(\F)}} \sum f_{\Phi}(s, \gamma g), $$
	where the Godement section is defined by
	$$ f_{\Phi}(s, g) := \norm[\det g]_{\A}^{1/2+s} \int_{\A^{\times}} \Phi((0,t)g) \norm[t]_{\A}^{1+2s} d^{\times}t. $$
	Jacquet-Zagier \cite{Z81, JZ87} considered the equality of two expansions of the following integral (absolutely convergent for any $s \in \C$ by Theorem \ref{AutoKernSpecDSch})
\begin{equation}
	I(s) = I(s; f, \Phi) := \int_{[\PGL_2]} K_0(x,x) \eis_{\Phi}(s, x) dx.
\label{MainIntDef}
\end{equation}
    One expansion is obtained by inserting the spectral expansion of $K_0(x,y)$, and leads to a sum of integral representations of the symmetric/adjoint square $L$-functions for $\PGL_2$. We shall call it the $\GL_2$ side. The other expansion is obtained by rearranging terms in
    $$ K_0(x,y) = K(x,y) - \textrm{(CSC)}, $$
    in a way different from the one leading to the geometric side of the Arthur-Selberg trace formula. In particular, they avoid Arthur's truncation and find an expression
\begin{equation} 
	I(s) = \sideset{}{_{\E}} \sum I_{\E}(s) + I_1(s) + I_2(s) + I_{\infty}'(s) + I_{\infty}''(s) + I_{\infty}'''(s), 
\label{JZGeom}
\end{equation}
    where the sum is over quadratic field extensions $\E/\F$. We recall the terms $I_{\E}(s)$ as follows. For every quadratic field extension $\E/\F$ we choose an embedding $\E \hookrightarrow \Mat_2(\F)$ of $\F$-algebras. We identify $\E^{\times}$ as a torus in $\GL_2$ defined over $\F$, whose adelic points is denoted by $\E_{\A}^{\times}$. Then
	$$ \frac{I_{\E}(s)}{\Lambda_{\E}(1/2+s)} = \frac{1}{2} \int_{\E_{\A}^{\times} \backslash \GL_2(\A)} \left( \sum_{1 \neq \lambda \in \E^{\times} / \F^{\times}} f(x^{-1} \lambda x) \right) \cdot \frac{\int_{\E_{\A}^{\times}} \Phi((0,1)tx) \norm[\det tx]_{\A}^{1/2+s} d^{\times}t }{\Lambda_{\E}(1/2+s)} dx. $$
	Since $t \mapsto \Phi((0,1)tx)$ is easily identified as a Schwartz function on $\A_{\E}$, $I_{\E}(s)$ is an integral representation of the complete Dedekind zeta function $\Lambda_{\E}(1/2+s)$. For this reason, we shall call this expansion the $\GL_1$ side. The equality of the two sides is an instance of \emph{spectral reciprocity} in the sense of \cite{BK19}.

\begin{remark}
    One of the original goals of Jacquet-Zagier is to deduce the Selberg trace formula from their equation (\ref{JZGeom}) by taking the residue at $s=1/2$. To identify this residue of the $\GL_1$ side with the geometric side of the trace formula is not at all a trivial task. In fact, it was even incomplete in \cite{JZ87}, which was completed only recently in \cite{W19}. For this reason, we prefer to replace the terminology ``geometric side'' with ``$\GL_1$ side''. In view of the difficulties in applications of the trace formula to concrete problems, it is unclear why the trace formula should be advantageous over Jacquet-Zagier's basic equation (\ref{JZGeom}).
\end{remark}

    The other terms in the $\GL_1$ side are irrelevant to the bias problem for cubic level. In fact, they turn out to be vanishing due to the following lemma.
    
\begin{lemma}
    Let $f_{\vp}$ be the component at a finite place $\vp$ of $f \in \Sch(\PGL_2(\A))$.
\begin{itemize}
    \item[(1)] If $f_{\vp}$ is in the space spanned by matrix coefficients of supercuspidal representations, then $I_{\infty}''(s)$ and $I_{\infty}'''(s)$ vanish identically.
    \item[(2)] If the support of $f_{\vp}$ is contained in the subset of $\F_{\vp}$-elliptic elements of $\PGL_2(\F_{\vp})$, then $I_1(s), I_2(s)$ and $I_{\infty}'(s)$ vanish identically.
\end{itemize}
\label{SimpleRSF}
\end{lemma}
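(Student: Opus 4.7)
The plan is to trace the origin of each of the five auxiliary terms in \eqref{JZGeom} back to the subtraction $K_0 = K - K_{\mathrm{Eis}} - K_{\mathrm{res}}$ and to the geometric rearrangement of $K(x,y)$ by $\PGL_2(\F)$-conjugacy classes, and then to invoke two standard local vanishing principles: Schur orthogonality for supercuspidal matrix coefficients, and vanishing of orbital integrals on non-elliptic classes when the test function is supported on the elliptic locus.

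For part (1), I would begin by recalling that $I_{\infty}''(s)$ and $I_{\infty}'''(s)$ in the decomposition \eqref{JZGeom} originate from the Rankin--Selberg integral of $\eis_{\Phi}(s,\cdot)$ against the subtracted pieces $K_{\mathrm{Eis}}$ and $K_{\mathrm{res}}$ of $K(x,y)$. After unfolding $\eis_{\Phi}(s,x)$ via Bruhat cells, the dependence on $f$ is packaged into distributions of the form
$$ \int_{\PGL_2(\A)} f(g)\, \varphi_{\chi,i\tau}(g)\, dg \quad \text{and} \quad \int_{\PGL_2(\A)} f(g)\, \chi(\det g)\, dg, $$
where $\varphi_{\chi,i\tau}$ is a matrix coefficient of a unitary principal series attached to some Hecke character $\chi$, and $\chi$ in the second integral is a quadratic idele class character. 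These distributions factorize over the places of $\F$, and at $v=\vp$ the relevant factor is the pairing of $f_{\vp}$ against a matrix coefficient of either a (unitary) principal series or a one-dimensional representation of $\PGL_2(\F_{\vp})$. By Schur orthogonality in the Plancherel decomposition of $\PGL_2(\F_{\vp})$, any matrix coefficient of a supercuspidal representation is orthogonal to matrix coefficients of representations not in its isotypic component; in particular it is orthogonal to all principal series and to all characters. Hence every local factor at $\vp$ vanishes, and so do $I_{\infty}''(s)$ and $I_{\infty}'''(s)$ identically in $s$.

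For part (2), the terms $I_1(s), I_2(s)$ and $I_{\infty}'(s)$ come from the geometric rearrangement of $K(x,y)$ according to the conjugacy classes of the summation index $\gamma \in \PGL_2(\F)$ that are \emph{not} $\F$-elliptic: namely the central class, the unipotent class, and the $\F$-split regular semisimple classes. In each case the resulting distribution can be written, after a change of variable, as a (possibly $s$-regularized) product of local orbital-type distributions of $f$ at such a representative $\gamma_0$. The factor at $\vp$ is then the local orbital integral of $f_{\vp}$ at a central, unipotent, or $\F_{\vp}$-split regular semisimple element. None of these is $\F_{\vp}$-elliptic, so the support hypothesis on $f_{\vp}$ forces every such local factor to vanish, and the three global distributions vanish identically in $s$.

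The main obstacle is to be precise about what $I_1(s), I_2(s), I_{\infty}'(s), I_{\infty}''(s), I_{\infty}'''(s)$ actually denote as analytic objects: in \cite{JZ87}, completed in \cite{W19}, they are introduced through integrals that require meromorphic continuation in $s$ (and, for the unipotent-type contributions, a regularization analogous to Arthur's). One must check that the local--global factorization used above commutes with these regularizations, i.e.\ that the $s$-dependence lives entirely in factors at places $v \neq \vp$ (plus the archimedean Godement-section factor), while the factor at $\vp$ is a fixed local distribution to which the relevant vanishing principle applies. Once this factorization is secured, both conclusions are immediate: Schur orthogonality in (1), and support-based vanishing of orbital integrals in (2).
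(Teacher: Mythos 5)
Your treatment of $I_{\infty}'''(s)$ in part (1) is essentially the paper's argument: the distribution pairs $f_{\vp}$ against principal series representations (in the paper, $\pi_{i\tau}(\chi_{\vp},\chi_{\vp}^{-1})(f_{\vp})=0$ by Schur orthogonality, quoted from Knightly--Li). Part (2) is also substantively the paper's argument, modulo a small slip in the enumeration: there is no separate ``central class'' to consider in $\PGL_2$ here; the three terms correspond to $\F$-split regular semisimple ($I_1$) and unipotent ($I_2$, $I_{\infty}'$) contributions, and all three vanish because $f_{\vp}$ is supported on $\F_{\vp}$-elliptic elements.

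The genuine gap is in your treatment of $I_{\infty}''(s)$. You claim it reduces to a distribution of the form $\int_{\PGL_2(\A)} f(g)\,\chi(\det g)\,dg$ (pairing against one-dimensional representations) and vanishes by Schur orthogonality. That is not what $I_{\infty}''(s)$ is: as the paper records, it is a sum over quadratic Hecke characters of integrals of the shape
$$ \int_{\gp{N}(\A)} \int_{\A^{\times}} \int_{\gp{K}} \int_{\gp{N}(\A)} f\!\left(\kappa^{-1} n_1 \begin{pmatrix} a^{-1} & 0 \\ 0 & 1 \end{pmatrix} w\, n_2\, \kappa\right) \chi(a)\,\norm[a]_{\A}^{3/4+s/2}\, f_{\Phi}(s,\kappa)\, dn_1\, d\kappa\, d^{\times}a\, dn_2, $$
which is built around the Bruhat big cell and includes an integral over a \emph{unipotent} subgroup. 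The vanishing mechanism is therefore not orthogonality of matrix coefficients with one-dimensional representations, but the cuspidality of supercuspidal representations: for a matrix coefficient $f_{\vp}$ of a supercuspidal $\pi_{\vp}$ one has
$$ \int_{\F_{\vp}} f_{\vp}\!\left(g \begin{pmatrix} 1 & u \\ 0 & 1 \end{pmatrix} h\right) du = 0 \qquad \text{for all } g,h, $$
a consequence of the Jacquet module of $\pi_{\vp}$ being zero (see \cite[Proposition 4.4.1]{Bu98}), which kills the innermost $dn_2$-integral. This is a different principle from Schur orthogonality; your argument does not supply the needed vanishing for $I_{\infty}''(s)$ and would need to be replaced by (or supplemented with) this cuspidality property.
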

\begin{proof}
    (1) $I_{\infty}'''(s)$ has a decomposition as
	$$ I_{\infty}'''(s) = -\sum_{\chi \in \widehat{\F^{\times} \R_+ \backslash \A^{\times}}} \frac{1}{2\pi} \int_0^{\infty} I_{\chi}(s,i\tau) d\tau, $$
	$$ I_{\chi}(s,i\tau) = \sum_{e \in \Bas_{i\tau}(\chi, \chi^{-1})} \int_{\gp{Z}(\A) \gp{N}(\A) \backslash \GL_2(\A)} \pi_{i\tau}(\chi, \omega \chi^{-1})(f)(W_e)(x) \overline{W_e(x)} f_{\Phi}(s,x) dx, $$
	where $\Bas_{i\tau}(\chi, \chi^{-1})$ is an orthonormal basis in the induced model of the principal series representation $\pi_{i\tau}(\chi, \chi^{-1})$ and $W_e$ is the Whittaker function of $e$. By \cite[Corollary 10.29]{KL72}, $\pi_{i\tau}(\chi_{\vp}, \chi_{\vp}^{-1})(f_{\vp}) = 0$, whence the vanishing of $I_{\infty}'''(s)$. $I_{\infty}''(s)$ is $-1/2$ times the sum over quadratic Hecke characters $\chi$ of
	$$ \int_{\gp{N}(\A)} \int_{\A^{\times}} \int_{\gp{K}} \int_{\gp{N}(\A)} f \left( \kappa^{-1} n_1 \begin{pmatrix} a^{-1} & 0 \\ 0 & 1 \end{pmatrix} wn_2 \kappa \right) \chi(a) \norm[a]_{\A}^{3/4+s/2} f_{\Phi}(s,\kappa) dn_1 d\kappa d^{\times}a dn_2. $$
	Under the assumption, we also have for any $g,h \in \GL_2(\F_{\vp})$,
	$$ \int_{\F_{\vp}} f_{\vp} \left( g \begin{pmatrix} 1 & u \\ 0 & 1 \end{pmatrix} h \right) du = 0, $$
	since the integral over any unipotent subgroup is vanishing on supercuspidal representations (see \cite[Proposition 4.4.1]{Bu98}), so is it for any linear combination of their matrix coefficients. Thus the inner most integral is identically vanishing, so is $I_{\infty}''(s)$.
	
\noindent (2) Observing the formulae
    $$ I_1(s) = \frac{1}{2} \int_{\gp{A}(\A) \backslash \GL_2(\A)} \left( \sum_{1 \neq \alpha \in \F^{\times}} f \left( x^{-1} \begin{pmatrix} \alpha & 0 \\ 0 & 1 \end{pmatrix} x \right) \right) \cdot \int_{\gp{A}(\A)} \Phi((1,1)ax) \norm[\det ax]_{\A}^{1/2+s} d^{\times}a dx, $$
    $$ I_2(s) = \int_{\gp{K}} \int_{\A^{\times}} f \left( \kappa^{-1} \begin{pmatrix} 1 & a \\ 0 & 1 \end{pmatrix} \kappa \right) \norm[a]_{\A}^{1/2+s} d^{\times}a \cdot \int_{\A^{\times}} \left( \int_{\A} \Phi((t,u)\kappa) du \right) \norm[t]_{\A}^{2s} d^{\times}t d\kappa, $$
    $$ I_{\infty}'(s) = \int_{\gp{K}} f_{\Phi}(s,\kappa) \cdot \int_{\A^{\times}} \left( \int_{\A} f \left( \kappa^{-1} \begin{pmatrix} 1 & u \\ 0 & 1 \end{pmatrix} \kappa \right) \psi(au) du \right) \norm[a]_{\A}^{1/2+s} d^{\times}a d\kappa, $$
    we see that they are distributions over $\GL_2(\F_{\vp})$ supported in the subset of split, resp. unipotent, resp. unipotent elements. Hence they are vanishing under the assumption on $f_{\vp}$.
\end{proof}

\begin{remark}
	If a test function $f \in \Sch(\PGL_2(\A))$ has (not necessarily different) two finite components $f_{\vp_1}$ and $f_{\vp_2}$ which satisfy the two conditions in Lemma \ref{SimpleRSF} respectively (this is the case for cubic level but not for square-free level in the bias problem), then we may also argue without referring to the concrete expressions of terms in Jacquet-Zagier's formula. Namely, by the condition in (1), we have $K_0(x,y) = K(x,y)$; while by the condition in (2), we have
	$$ K(x,x) = \sum_{\gamma \in \PGL_2(\F)^{\mathrm{ell}}} f(x^{-1} \gamma x) $$
	where $\PGL_2(\F)^{\mathrm{ell}}$ is the subset of $\F$-elliptic elements in $\PGL_2(\F)$. Thus we directly deduce the desired simplification
	$$ I(s) = \int_{[\PGL_2]} K(x,x) \eis_{\Phi}(s,x) dx = \sum_{\E} I_{\E}(s) $$
	by Rankin-Selberg unfolding.
\end{remark}

\section{Computation of Spectral Side}

	\subsection{Choice of Test Functions}
	
	Recall that we are considering a totally real number field $\F$ of degree $d$ with absolute different $\idl{D}_{\F}$ and absolute discriminant $D_{\F} = \Nr_{\Q}^{\F}(\idl{D}_{\F})$. For each place $v \mid \infty$, we have chosen an integer $k_v \geq 1$. We have taken a square-free integral ideal $\idl{N}$. Write $\vec{k} = (k_v)_{v \mid \infty} \in \N^d$. Let $\mathcal{A}(\vec{k},\idl{N}^3)$ be the set of cuspidal representations $\pi$ such that $\pi_v \simeq D_{2k_v}$ for any $v \mid \infty$, and that $\pi_{\vp}$ has conductor $\vp^3$ at $\vp \mid \idl{N}$ and is unramified at $\vp \nmid \idl{N}$. We now specify the test functions $f$ and $\Phi$ in (\ref{MainIntDef}) for the bias problem in this paper.
	
	We choose $\Phi$ to be the standard $\gp{K}$-invariant Schwartz function, that is
\begin{equation}
	\Phi(x,y) = \sideset{}{_v} \prod \Phi_v(x_v, y_v), \quad \Phi_v(x,y) := \left\{ \begin{matrix} e^{-\pi (x^2 + y^2)} & \F_v = \R \\ 1_{\vo_{\vp}}(x) \cdot 1_{\vo_{\vp}}(y) & v=\vp < \infty \end{matrix} \right. .
\label{SphPhi}
\end{equation}
\begin{lemma}
	The spherical Eisenstein series $\eis_{\Phi}(s,x)$ has a constant residue at $s=1/2$ given by
	$$ \Res_{s=1/2} \eis_{\Phi}(s,x) = \frac{\Lambda_{\F}^{(-1)}(1)}{2 D_{\F}}, $$
	where $\Lambda_{\F}(s)$ is the Dedekind zeta function of $\F$ and $\Lambda_{\F}^{(-1)}(x)$ is its residue at $s=x$.
\label{SphEisRes}
\end{lemma}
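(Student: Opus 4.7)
The plan is to analytically continue $\eis_{\Phi}(s,x)$ by Tate's global Poisson method and read off the residue at $s=1/2$.

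First I would unfold. Since the map $\gamma \mapsto (0,1)\gamma$ induces a bijection $\gp{B}(\F) \backslash \GL_2(\F) \cong \mathbb{P}^1(\F)$, and since the inner integral in $f_{\Phi}(s, \gamma g)$ is invariant under $t \mapsto \alpha t$ for $\alpha \in \F^{\times}$, one can fold the $\F^{\times}$-scaling into the $t$-integration to obtain
\begin{equation*}
  \eis_{\Phi}(s,g) = \norm[\det g]_{\A}^{\frac{1}{2}+s} \int_{\F^{\times} \backslash \A^{\times}} \norm[t]_{\A}^{1+2s} \theta_{\Phi}(t,g) \, d^{\times}t, \qquad \theta_{\Phi}(t,g) := \sum_{v \in \F^2 \setminus \{0\}} \Phi(tvg).
\end{equation*}

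Next I would carry out the Riemann-style splitting of the $t$-integral at $\norm[t]_{\A} = 1$ and apply Poisson summation on the self-dual lattice $\F^2 \hookrightarrow \A^2$ to the small-$\norm[t]_{\A}$ piece. After the substitution $t \mapsto t^{-1}$, the transformed piece becomes an integral over $\norm[t]_{\A} \geq 1$ against $\theta_{\hat{\Phi}}$, which is entire in $s$; the only polar contribution comes from the constant terms $v=0$ and $w=0$ in the Poisson formula, yielding
\begin{equation*}
  -\frac{V \Phi(0)}{2s+1} + \frac{V \hat{\Phi}(0) \norm[\det g]_{\A}^{-1}}{2s-1}, \qquad V := \Vol \bigl( \F^{\times} \backslash \A^1, d^{\times}t \bigr).
\end{equation*}
Only the second term is singular at $s = 1/2$, so the external factor $\norm[\det g]_{\A}^{1/2+s}$ cancels the $\norm[\det g]_{\A}^{-1}$ at $s=1/2$ and produces
\begin{equation*}
  \Res_{s=1/2} \eis_{\Phi}(s, g) = \frac{V \hat{\Phi}(0)}{2},
\end{equation*}
which is manifestly constant in $g$.

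Finally I would evaluate the two constants by routine Tate calculus. At each archimedean place the Gaussian $e^{-\pi(x^2+y^2)}$ is self-dual, giving $\hat{\Phi}_v(0) = 1$; at each finite $\vp$, the self-dual measure on $\F_{\vp}$ with respect to $\psi_{\vp}$ produces $\widehat{1_{\vo_{\vp}}} = D_{\vp}^{-1/2} \cdot 1_{\idl{D}_{\F,\vp}^{-1}}$, hence $\hat{\Phi}_{\vp}(0) = D_{\vp}^{-1}$, which multiplies across places to $\hat{\Phi}(0) = D_{\F}^{-1}$. For $V$, the same Tate splitting applied to the one-dimensional zeta integral $\zeta(s, \Phi_0) := \int_{\A^{\times}} \Phi_0(t) \norm[t]_{\A}^s d^{\times} t$ with the standard Schwartz function $\Phi_0$ on $\A$ produces a simple pole at $s=1$ of residue $V \hat{\Phi}_0(0) = V D_{\F}^{-1/2}$; comparing with the direct Euler-product evaluation $\zeta(s, \Phi_0) = D_{\F}^{-1/2} \Lambda_{\F}(s)$ yields $V = \Lambda_{\F}^{(-1)}(1)$. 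Combining these gives $\Lambda_{\F}^{(-1)}(1) D_{\F}^{-1}/2 = \Lambda_{\F}^{(-1)}(1)/(2 D_{\F})$, as claimed.

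The main obstacle is not conceptual---the Tate--Poisson mechanism is entirely standard---but purely conventional: keeping careful track of how the different ideal $\idl{D}_{\F}$ enters $\hat{\Phi}$ through the self-dual additive measures at finite places, and how the $D_{\vp}^{-1/2}$ factors from $\Vol(\vo_{\vp})$ combine with them to produce the clean final $D_{\F}^{-1}$ rather than a spurious half-integral power of $D_{\F}$. Verifying that the boundary integrals over $\norm[t]_{\A} \geq 1$ are entire in $s$ is routine from the rapid decay of $\theta_{\Phi}(t,g)$ as $\norm[t]_{\A} \to \infty$.
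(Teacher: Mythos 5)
Your proposal is correct, and it takes a genuinely different (though closely related) route from the paper's proof. The paper reads off the pole from the well-known constant-term identity for the Eisenstein series, namely that the constant term along $\gp{N}$ equals $f_{\Phi}(s,x) + f_{\widehat{\Phi}}(-s,x)$ (the intertwining operator sends the Godement section for $\Phi$ to the Godement section for $\widehat{\Phi}$ at $-s$); it then evaluates $f_{\widehat{\Phi}}(-s,1) = D_{\F}^{-1/2-2s}\Lambda_{\F}(1-2s)$ directly and applies the functional equation. You instead unfold $\eis_{\Phi}$ into a theta integral over $\F^{\times}\backslash\A^{\times}$ and run Riemann's splitting with Poisson summation, which rederives the pole structure from first principles. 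Both routes reduce to the same local computation, namely $\widehat{\Phi}_{\vp}(0) = \Vol(\vo_{\vp}^2) = D_{\vp}^{-1}$, which is where the discriminant enters. Your argument has two small advantages: it does not presuppose the constant-term formula, and it makes manifest why the residue is independent of $g$ (the $\norm[\det g]_{\A}^{1/2+s}\cdot\norm[\det g]_{\A}^{-1}$ cancellation at $s=1/2$), whereas the paper asserts constancy and then computes at $x=1$. The paper's argument is shorter given the machinery. Note that your claim $V = \Lambda_{\F}^{(-1)}(1)$ is indeed correct with the paper's measure conventions, because the $D_{\F}^{-1/2}$ factors from $\widehat{\Phi}_0(0)$ and from the Euler product $\zeta(s,\Phi_0) = D_{\F}^{-1/2}\Lambda_{\F}(s)$ cancel exactly as you indicate.
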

\begin{proof}
	The pole of $\eis_{\Phi}(s,x)$ is controlled by its constant term, which is easily identified as
	$$ f_{\Phi}(s,x) + f_{\widehat{\Phi}}(-s,x), $$
where $\widehat{\Phi}$ is the twisted Fourier transform defined by
	$$ \widehat{\Phi}(x,y) := \int_{\A^2} \Phi(u,v) \psi\left( - (u,v) w \begin{pmatrix} x \\ y \end{pmatrix} \right) dudv, \quad w := \begin{pmatrix} 0 & -1 \\ 1 & 0 \end{pmatrix}. $$
	For our choice of $\Phi$ (\ref{SphPhi}), it is easy to see
	$$ \widehat{\Phi}(x,y) = \sideset{}{_v} \prod \widehat{\Phi_v}(x_v, y_v), \quad \widehat{\Phi_v}(x,y) := \left\{ \begin{matrix} e^{-\pi (x^2 + y^2)} & \F_v = \R \\ D_{\vp}^{-1} \cdot 1_{\idl{D}_{\vp}^{-1}}(x) \cdot 1_{\idl{D}_{\vp}^{-1}}(y) & v=\vp < \infty \end{matrix} \right. , $$
	where $\idl{D}_{\vp}$ is the component of $\idl{D}_{\F}$ at $\vp$, and $D_{\vp} = \Nr_{\Q}^{\F}(\idl{D}_{\vp})$. We thus get
	$$ \Res_{s=\frac{1}{2}} \eis_{\Phi}(s,x) = \Res_{s=\frac{1}{2}} f_{\widehat{\Phi}}(-s,1) = \Res_{s=\frac{1}{2}} D_{\F}^{-\frac{1}{2}-2s} \Lambda_{\F}(1-2s) = \Res_{s=\frac{1}{2}} D_{\F}^{-1} \Lambda_{\F}(2s) = \frac{\Lambda_{\F}^{(-1)}(1)}{2 D_{\F}}. $$
\end{proof}
	
	We specify $f = \otimes_v' f_v$ place-by-place as follows:
	
\noindent At a real place $v \mid \infty$, we introduce  a non-zero fractional linear function on $\GL_2(\R)$
\begin{equation}
	J(g) := \frac{\det(g)}{(-b+c+(a+d)i)^{2}}, \quad g = \begin{pmatrix} a & b \\ c & d \end{pmatrix}.
\label{ArchInv}
\end{equation}
	Let $\delta_v > 0$. Take $h_v=h_{\delta_v} \in \Cont^{\infty}(\R_{\geq 0})$ to be $h_v(t) = \exp(-\delta_v t)$. We take the test function of the form
\begin{equation}
	f_v(g) := \left\{ \begin{matrix} \frac{(2k-1)2^{2k-2}}{\pi} \cdot J(g)^k \cdot h_v( \norm[J(g)]^{-1} ) & \text{if } \det(g) > 0 \\ 0 & \text{otherwise} \end{matrix} \right.. 
\label{TFArch}
\end{equation}
	We shall write $\vec{\delta}=(\delta_v)_{v \mid \infty}$. We have the pointwise limit
\begin{equation}
	\lim_{\delta_v \to 0^+} f_v(g) = \varepsilon(1/2,D_{2k},\psi_{\R}) \cdot \Pairing{D_{2k}(g)v_0}{v_0},
\label{TFArchLim}
\end{equation}  
	where $\varepsilon(1/2,D_{2k},\psi_{\R})=(-1)^k$ (see \cite[(3.7)]{Kn94}) and $v_0$ is a unitary lowest weight vector in $D_{2k}$, the discrete series of lowest weight $2k$ (see \cite[(6)]{KL06}). Note that the above limit also holds in $\intL^2(\PGL_2(\R))$ for $k \geq 1$ and in $\intL^1(\PGL_2(\R))$ for $k > 1$.
	
\noindent At a finite place $\vp$, we write $q=\Nr_{\Q}^{\F}(\vp)$ for simplicity. Let $\gp{Z}_{\vp}$ be the center of $\GL_2(\F_{\vp})$. Let $\RI_\Fp$ be the Iwahoric subgroup of $\GL_2(\vo_{\vp})$ consisting of upper triangular elements modulo $\vp$. If $\vp \nmid \idl{N}$, we choose $f_{\vp} = \mathbbm{1}_{\PGL_2(\vo_\vp)}$. If $\vp \mid \idl{N}$, we take $f_{\vp}$ to be supported in $\begin{pmatrix} & 1 \\ \varpi_{\vp} & \end{pmatrix} \cdot 
\gp{Z}_\Fp \cdot \RI_\Fp$, in which it is given by the formula of $\wt{f}^b_\Fp$ in Lemma \ref{lem:app:Iwahoriaveragefb}, i.e.,
\begin{equation}
	f_{\vp} \left( \begin{pmatrix} 0 & 1 \\ \vpi_\Fp & 0 \end{pmatrix} z \begin{pmatrix} x_1 & r_1 \\ \vpi_\Fp r_2 & x_2 \end{pmatrix} \right) = (q+1) \cdot \bigg\{ \begin{matrix} q-1 & r_1+r_2\in \Fp \\ -1 & \text{otherwise}
\end{matrix}.
\label{TFNArch}
\end{equation}
for any $x_1,x_2\in \Fo^\times_\Fp$, $r_1,r_2\in \Fo_\Fp$.
\begin{proposition}
	If $\pi$ is a unitary irreducible representation of $\PGL_2(\F_\vp)$, then $\pi(f_{\vp})$ is a non-zero operator only if the conductor of $\pi$ is $\vp^3$. In this case, we have 
	$$ \Tr(\pi(f_{\vp})) = \Vol(\PGL_2(\vo_\vp)) \cdot \varepsilon \left( \frac{1}{2},\pi,\psi_{\vp} \right). $$
\label{NArchC}
\end{proposition}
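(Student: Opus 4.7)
The plan is to compute $\Tr(\pi(f_\vp))$ directly using Casselman's theory of newvectors, the Atkin--Lehner theorem, and the description of $f_\vp = \wt f^b_\vp$ as an Iwahori-conjugation average provided by Lemma \ref{lem:app:Iwahoriaveragefb}. The key geometric observation is that $w_\vp := \begin{pmatrix} 0 & 1 \\ \varpi_\vp & 0 \end{pmatrix}$ normalizes $\RI_\vp$, so that the support $w_\vp \gp{Z}_\vp \RI_\vp = \RI_\vp w_\vp \gp{Z}_\vp$ is a single Iwahori coset modulo center, and I may write $\pi(f_\vp) = \pi(w_\vp) \cdot \pi(\phi)$, where $\phi$ is the restriction of the explicit formula to $\RI_\vp$. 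A discrete Fourier decomposition on $\vo_\vp/\vp \simeq \F_q$ reveals that the weight $(q-1) \mathbbm{1}_{r_1+r_2 \in \vp} - \mathbbm{1}_{r_1+r_2 \notin \vp}$ equals $\sum_{\chi \neq \mathbbm{1}} \chi(r_1 + r_2)$ summed over non-trivial additive characters, so that $\phi$ detects precisely the non-trivial Fourier modes along the unipotent radical of $\RI_\vp$.

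Via Lemma \ref{lem:app:Iwahoriaveragefb}, I reduce from $f_\vp = \wt f^b_\vp$ to a simpler function $f^b_\vp$ whose support lies in a $w_\vp \gp{Z}_\vp$-translate of $\gp{K}_0[\vp^3]$, so that $\pi(f^b_\vp)$ factors through the projection onto the $\gp{K}_0[\vp^3]$-fixed subspace $V_\pi^{\gp{K}_0[\vp^3]}$; since conjugation averaging preserves the trace, $\Tr(\pi(f_\vp))$ is determined by $\Tr(\pi(f^b_\vp))$ up to a volume factor. For $c(\pi) > 3$ this fixed subspace is zero and the trace vanishes automatically. For $c(\pi) < 3$, Casselman's theorem describes $V_\pi^{\gp{K}_0[\vp^3]}$ as spanned by iterated translates of the newvector at conductor $c(\pi)$, and the signed combination in $\wt f^b_\vp$ should be exactly the M\"obius weight annihilating each such old-form; I would verify this case-by-case against the classification of unitary $\pi$ with $c(\pi) \in \{0,1,2\}$, matching the Fourier expansion of $\phi$ against the known action of the Iwahori-Hecke generators on the oldform basis.

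For $c(\pi) = 3$, the space $V_\pi^{\gp{K}_0[\vp^3]}$ is one-dimensional, spanned by a newvector $v_0$, so $\pi(f^b_\vp)$ is rank one and its trace is its sole eigenvalue on $v_0$. I factor $w_\vp = \mathrm{diag}(1, \varpi_\vp^{-2}) \cdot w_3$ with $w_3 := \begin{pmatrix} 0 & 1 \\ \varpi_\vp^3 & 0 \end{pmatrix}$ the standard Atkin--Lehner element at level $\vp^3$; the Atkin--Lehner theorem then gives $\pi(w_3) v_0 = \varepsilon(\tfrac{1}{2}, \pi, \psi_\vp) v_0$. The residual diagonal factor is absorbed by the Iwahori conjugation average and the fact that its image in $\PGL_2$ acts in a controlled way on the newvector line. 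Tracking the Haar-measure normalizations from Remark \ref{MaxCpMeas} and the explicit volumes in the definition of $f_\vp^b$ then yields the advertised factor $\Vol(\PGL_2(\vo_\vp)) \cdot \varepsilon(\tfrac{1}{2}, \pi, \psi_\vp)$.

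The main obstacle will be the case-by-case verification of the M\"obius cancellation for $c(\pi) \leq 2$. Although the character-sum decomposition of $\phi$ strongly suggests that $\pi(\phi)$ isolates the conductor-$\vp^3$ isotypic part, making this rigorous requires matching Casselman's tower decomposition against the filtration of $\RI_\vp$ by the principal congruence subgroup $\RI_\vp^1$, whose quotient carries the Fourier analysis on $\F_q$ used above; the matching depends on the representation type (spherical, unramified Steinberg twist, ramified-Steinberg, or supercuspidal of lower conductor), and tracking the normalization constants through each case so that the final prefactor is exactly $\Vol(\PGL_2(\vo_\vp))$ without any extra scalar will require care.
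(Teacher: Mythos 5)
Your proposal contains a genuine gap at its core. You claim that the support of $f^b_\vp$ lies in a $w_\vp\gp{Z}_\vp$-translate of $\gp{K}_0[\vp^3]$ and that $\pi(f^b_\vp)$ therefore factors through the projection onto $V_\pi^{\gp{K}_0[\vp^3]}$. Neither statement is true. Lemma \ref{lem:app:supportoffb} shows that the support of $f^b_\vp$ is $w_\vp\gp{Z}_\vp\RI_\vp$, i.e.\ a translate of the \emph{full Iwahori} $\RI_\vp=\gp{K}_0[\vp]$, which is much larger than $\gp{K}_0[\vp^3]$. More fatally, even on that support $f^b_\vp$ is not constant: it equals $(q^2-1)\,\wt\psi_\vp(-(r_1+r_2)/x_1)$, a nontrivial affine generic character of the Iwahori. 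Hence $f^b_\vp$ is not right-invariant under $\gp{K}_0[\vp^3]$ (or under any compact subgroup), so $\pi(f^b_\vp)$ does not land in, let alone project onto, a $\gp{K}_0[\vp^c]$-fixed subspace; if anything its image consists of vectors transforming by affine generic characters of the pro-unipotent radical of $\RI_\vp$. The entire Casselman newvector and Atkin--Lehner machinery is therefore inapplicable here, and the subsequent factorization $w_\vp=\mathrm{diag}(1,\varpi_\vp^{-2})w_3$ cannot be leveraged. Your preliminary observations are not all wasted, though: that $w_\vp$ normalizes $\RI_\vp$, that $\Tr(\pi(f_\vp))=\Tr(\pi(f^b_\vp))$ because $\wt f^b_\vp$ is a conjugation average, and that $(q-1)\mathbbm{1}_{r_1+r_2\in\vp}-\mathbbm{1}_{r_1+r_2\notin\vp}=\sum_{\chi\neq 1}\chi(r_1+r_2)$, are all correct and in the right spirit; they just do not fit into the newvector framework.

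The paper's actual argument bypasses newvectors entirely and is immediate once you look at the \emph{definition} of $f^b_\vp$. By Definition \ref{defin:app:testfun}, $f^b_\vp=\sum_{\chi,\zeta}\zeta\,d_{\zeta,\chi}\,\ol{C^{\chi,\zeta}_0}$ is a linear combination of the normalized diagonal matrix coefficients $C^{\chi,\zeta}_0$ of the simple supercuspidal representations $\sigma^\zeta_\chi$, each of conductor $\vp^3$, weighted by the formal degree $d_{\zeta,\chi}=(q^2-1)/2$ and the sign $\zeta$. Schur orthogonality of matrix coefficients (equation (\ref{eq:app:testfun:projectiontoline}), citing \cite{KL06}) then says immediately that $\pi(f^b_\vp)=0$ unless $\pi\simeq\sigma^\zeta_\chi$ for some $(\chi,\zeta)$, and in that case $\pi(f^b_\vp)$ is $\zeta$ times the rank-one projection onto $\BC f_0^\zeta$, so $\Tr(\pi(f^b_\vp))=\zeta$. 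The identification $\zeta=\varepsilon(1/2,\sigma^\zeta_\chi,\psi_\vp)$ is a separate and self-contained computation (Lemmas \ref{lem:app:whittakerformula}--\ref{lem:app:localrootnumHeckeintegral}, Corollary \ref{cor:app:localrootnumlevel0}) via the explicit Whittaker functional on the compactly induced model and the $\GL_2$ local functional equation, not via Atkin--Lehner eigenvalues. If you want to pursue a newform-theoretic route you would first have to establish a direct link between the affine-generic-character eigenline $\BC f_0^\zeta$ and the $\gp{K}_0[\vp^3]$-fixed newvector line, which is a nontrivial compatibility and is exactly what the paper avoids by working in the induced model throughout.
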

\begin{proof}
	By construction, $f_{\vp} = \wt{f}^b_\Fp$ is a normalized average of $f^b_\Fp$ under the action of $\RI_\vp$ by conjugation. Hence it suffices to check the stated equality for $f^b_\Fp$ instead of $\wt{f}^b_\Fp$, for which we refer the reader to Appendix \ref{app:simpsupcusp}, in particular the equation (\ref{eq:app:wholetestfun:projectiontoline}).
\end{proof}

	\subsection{Computation and Estimation at Archimedean Places}
	
	In this subsection we work over a fixed (real) place $v \mid \infty$, hence we abbreviate $k=k_v$ for simplicity. The function $J(g)$ (see (\ref{ArchInv})) satisfies
	$$ J \left( \begin{pmatrix} \cos \theta & \sin \theta \\ -\sin \theta & \cos \theta \end{pmatrix} g \right) = J \left( g \begin{pmatrix} \cos \theta & \sin \theta \\ -\sin \theta & \cos \theta \end{pmatrix} \right) = e^{-2i\theta}J(g). $$
	Hence for any admissible irreducible representation $\pi$ of $\GL_2(\R)$, $\pi(f_v)$ (see (\ref{TFArch})) picks up the (unique if it exists) weight $2k$ vector in $\pi$ and multiplies it by $\Tr(\pi(f_v))$. The goal of this subsection is to give information about $\Tr(\pi(f_v))$ for all possible $\pi$ by essentially computing all of them.
	
\begin{lemma}
	If $k>1$, then $\lim_{\delta \to 0^+} \Tr(\pi(f_v)) = 0$ if $\pi \not\simeq D_{2k}$ and $=(-1)^k$ if $\pi \simeq D_{2k}$.
\end{lemma}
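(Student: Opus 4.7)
The plan is to use the $\gp{K}_v$-bi-covariance of $f_v$ to reduce $\Tr(\pi(f_v))$ to a single matrix-coefficient integral, pass to the $\intL^1$-limit $\delta\to 0^+$ afforded by (\ref{TFArchLim}), and invoke Schur orthogonality for the square-integrable representation $D_{2k}$.

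From the identity $J(k_\theta g k_{\theta'})=e^{-2i(\theta+\theta')}J(g)$, the test function $f_v$ is of pure left and right $\gp{K}_v$-type with character $k_\theta\mapsto e^{-2ik\theta}$. A standard change-of-variable computation then shows that, for any admissible irreducible unitary $\pi$, the operator $\pi(f_v)$ annihilates every $\gp{K}_v$-isotypic component of $\pi$ other than the weight-$2k$ one and has image therein. If $\pi$ does not contain weight $2k$, then $\Tr(\pi(f_v))=0$. Otherwise the weight-$2k$ subspace is one-dimensional by admissibility, spanned by a unit vector $v_{2k}$, giving
\[ \Tr(\pi(f_v))=\bigl\langle\pi(f_v)v_{2k},v_{2k}\bigr\rangle=\int_{\PGL_2(\R)}f_v(g)\,\bigl\langle\pi(g)v_{2k},v_{2k}\bigr\rangle\,dg. \]

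For $k>1$, the excerpt records that the pointwise limit (\ref{TFArchLim}) holds in $\intL^1(\PGL_2(\R))$; combined with the unitarity bound $\lvert\langle\pi(g)v_{2k},v_{2k}\rangle\rvert\leq 1$, dominated convergence yields
\[ \lim_{\delta\to 0^+}\Tr(\pi(f_v))=(-1)^k\int_{\PGL_2(\R)}\bigl\langle D_{2k}(g)v_0,v_0\bigr\rangle\,\bigl\langle\pi(g)v_{2k},v_{2k}\bigr\rangle\,dg. \]
Using the identity $\langle D_{2k}(g)v_0,v_0\rangle=\overline{\langle D_{2k}(g^{-1})v_0,v_0\rangle}$, valid for any diagonal matrix coefficient of a unitary representation, together with the substitution $g\mapsto g^{-1}$ (the hyperbolic measure being unimodular), one rewrites the integrand as the product of a matrix coefficient of $D_{2k}$ and the complex conjugate of a matrix coefficient of $\pi$. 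Since $D_{2k}$ is square-integrable and self-contragredient on $\PGL_2(\R)$, Schur orthogonality forces this pairing to vanish whenever $\pi\not\simeq D_{2k}$, which is the first half of the claim.

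When $\pi\simeq D_{2k}$ we may take $v_{2k}=v_0$; Schur orthogonality then evaluates the integral to the reciprocal of the formal degree of $D_{2k}$ with respect to the hyperbolic Haar measure on $\PGL_2(\R)$. The main obstacle --- and precisely the raison d'\^etre of the prefactor $(2k-1)2^{2k-2}/\pi$ in (\ref{TFArch}) --- is to verify that this reciprocal formal degree equals $1$, so that the limit collapses to $(-1)^k$. This is an elementary Iwasawa-coordinate computation: parametrising $g=n(x)a(y)k_\theta$, the explicit rational form of $J(g)$ reduces $\int\lvert\langle D_{2k}(g)v_0,v_0\rangle\rvert^2\,dg$ to a beta-type integral in $y$ evaluating to $\pi/((2k-1)2^{2k-2})$, confirming the normalization and completing the proof.
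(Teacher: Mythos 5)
Your argument is correct and is essentially the paper's proof with the details spelled out: both pass the limit $\delta\to 0^+$ through the trace using the $\intL^1(\PGL_2(\R))$ convergence stated after (\ref{TFArchLim}) (which is what requires $k>1$), then conclude by Schur orthogonality for the square-integrable $D_{2k}$. The normalization check you flag at the end---that the prefactor $(2k-1)2^{2k-2}/\pi$ makes the formal degree of $D_{2k}$ equal to $1$ with respect to the hyperbolic Haar measure---is already encoded in the statement of (\ref{TFArchLim}) via the reference \cite{KL06}, so it is not an independent obstacle, though your direct Iwasawa-coordinate verification is a legitimate way to confirm it.
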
	
\begin{proof}
	By (\ref{TFArchLim}) in the sense of $\intL^1(\PGL_2(\R))$, we have $\lim_{\delta \to 0^+} \Tr(\pi(f_v)) = \Tr \left( \lim_{\delta \to 0^+} f_v \right)$. We then conclude by Schur's lemma.
\end{proof}

\begin{definition}
	For integers $k \geq 1$, define
	$$ P_k(s) := \sum_{j=0}^k (-1)^j \binom{2k}{2j} \frac{\Gamma \left( s+k+j-\frac{1}{2} \right) \Gamma \left( k-j+\frac{1}{2} \right)}{\Gamma \left( s+2k \right)}. $$
\label{WtkPoly}
\end{definition}

\begin{lemma}
	Let $\pi \simeq \pi(\norm^{i\tau}, \norm^{-i\tau}) \otimes \sgn^l$ with $l \in \{ 0,1 \}$ and $\tau \in \R \cup (-\RamC,\RamC)i$ be a principal or complementary series representation. Then as $\delta_v > 0$ and $\tau$ vary we have for any $-1/2+\RamC < c < 0$ and any $C \gg 1$
	$$ \Tr(\pi(f_v)) \ll_{c,C} \delta_v^{-c} (1+\norm[\tau])^{-C}. $$
\end{lemma}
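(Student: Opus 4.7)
The plan is to write $\Tr(\pi(f_v))$ as a Mellin inversion integral whose kernel is built from the polynomial $P_k$ of Definition \ref{WtkPoly}, then estimate by shifting contours.

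First, I would exploit the biequivariance $J(k_{\theta_1} g k_{\theta_2}) = e^{-2i(\theta_1+\theta_2)} J(g)$: the operator $\pi(f_v)$ annihilates every $\SO(2)$-weight of $\pi$ except $\pm 2k$ and acts by the scalar $\Tr(\pi(f_v))$ on the weight-$2k$ line. Fixing a unit vector $v_{2k}$ of weight $2k$ and writing $\phi_\tau(g) = \langle \pi(g) v_{2k}, v_{2k}\rangle$, this gives $\Tr(\pi(f_v)) = \int_{\PGL_2(\R)^+} f_v(g)\, \phi_\tau(g)\, d\bar g$. Performing the remaining $\SO(2) \times \SO(2)$ integration (which isolates the weight-$2k$ component of $\phi_\tau$) and substituting $t = |J(g)|^{-1}$, the integral collapses to a one-variable form
\[
\Tr(\pi(f_v)) = C_k \int_0^\infty K_\tau(t)\, e^{-\delta_v t}\, dt,
\]
where $K_\tau(t)$ is extracted from the explicit ${}_2F_1$ expression of the weight-$2k$ matrix coefficient of the (complementary or unitary) principal series.

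Next, I would compute $\widetilde{K}_\tau(s) := \int_0^\infty K_\tau(t)\, t^{s-1}\, dt$ via the Mellin--Barnes integral for ${}_2F_1$, combined with the binomial expansion of $(1+y^2)^k$. The expected identification is that, up to an elementary factor in $s$ and $\tau$, $\widetilde{K}_\tau(s)\Gamma(s) = P_k(s+\tfrac12+i\tau)\, P_k(s+\tfrac12-i\tau)$; this is precisely what motivates Definition \ref{WtkPoly}. Since $\int_0^\infty e^{-\delta_v t} t^{s-1} dt = \Gamma(s)\, \delta_v^{-s}$, Mellin inversion then yields, for some sufficiently large $c_0 > 0$,
\[
\Tr(\pi(f_v)) = \frac{C_k}{2\pi i}\int_{(c_0)} \widetilde{K}_\tau(s)\, \Gamma(s)\, \delta_v^{-s}\, ds.
\]

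Finally, I would shift the contour to $\Re s = c$ for any $-\tfrac12 + \RamC < c < 0$. No poles are crossed: $\Gamma(s)$ is holomorphic in $-1 < \Re s < 0$, and the shifted Gammas $\Gamma(s+k+j-\tfrac12)$ inside $P_k(s+\tfrac12\pm i\tau)$ have leftmost poles at $\Re s = -\tfrac12 + |\Im(\pm i\tau)|$, which is at most $-\tfrac12 + \RamC$ by Kim--Sarnak/Blomer--Brumley in the complementary range. On $\Re s = c$ one has $|\delta_v^{-s}| = \delta_v^{-c}$, and Stirling applied to the Gamma ratios in $P_k$ yields fixed polynomial decay $O((1+|\tau|)^{-N})$ for some $N$. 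To upgrade to $(1+|\tau|)^{-C}$ for arbitrary $C$, I would iterate via the Casimir $\mathcal{C}_{\SL_2(\R)}$, which acts on $\pi$ as $\tfrac14(1-4\tau^2)$: integration by parts against $f_v$ replaces $\phi_\tau$ by $\bigl(\tfrac14(1-4\tau^2)\bigr)^{-m}\phi_\tau$ at the cost of applying $\mathcal{C}^m$ to $f_v$, which preserves the shape (derivatives of $J^k$ and $h_v$) with constants independent of $\delta_v$. The main difficulty will be the second step: carrying out the Mellin evaluation explicitly enough to recognize the $P_k$-structure, which rests on the classical Mellin--Barnes representation of the weight-$2k$ principal series matrix coefficient together with a Barnes-type evaluation.
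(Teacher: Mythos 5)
The paper's proof does not use matrix coefficients at all; it starts from the unipotent-integration (Jacquet--Langlands) formula for the trace of a principal series,
\[
\Tr(\pi(f_v)) = \int_{\R^{\times}}\int_{\R} f_v\begin{pmatrix}y&x\\0&1\end{pmatrix}\norm[y]^{i\tau-\frac12}\sgn^l(y)\,dx\,d^{\times}y,
\]
substitutes $f_v$, Mellin-inverts $h_{\delta_v}$, and splits the inner double integral into a Beta function times a single $P_k(s)$, arriving at
\[
\Tr(\pi(f_v))=\frac{(2k-1)2^{2k-2}}{\pi}\int_{(c)}\delta_v^{-s}\,\Gamma(s)\,B\!\left(s+k+i\tau-\tfrac12,\ s+k-i\tau-\tfrac12\right)P_k(s)\,\frac{ds}{2\pi i}.
\]
Your route via $\Tr(\pi(f_v))=\int f_v\,\overline{\phi_\tau}\,d\bar g$ with $\phi_\tau$ the weight-$2k$ matrix coefficient is in principle legitimate, but two concrete things go wrong. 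First, your conjectured factorization $\widetilde K_\tau(s)\Gamma(s)\approx P_k(s+\tfrac12+i\tau)P_k(s+\tfrac12-i\tau)$ is not the structure that actually appears: the correct integrand contains only one factor of $P_k$, evaluated at $s$, together with the Beta function $B(s+k+i\tau-\tfrac12,s+k-i\tau-\tfrac12)$. Second, and more seriously, you assert ``no poles are crossed'' when moving the contour from $\Re s=c_0>0$ to $\Re s=c<0$, yet $\Gamma(s)$ has a simple pole at $s=0$, which lies between the two contours. The paper does cross $s=0$ and handles it by noting that $P_k(0)=0$ (this is computed from the integral representation $\int_{\R}(x-i)^{-2k}dx=0$), so the residue of $\Gamma(s)$ at $s=0$ is killed. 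Your write-up never establishes an analogous vanishing of $\widetilde K_\tau$ at $s=0$; without it, the contour shift would produce a nonzero $\delta_v$-independent constant and the claimed bound $\ll\delta_v^{-c}$ with $c<0$ (which tends to $0$ as $\delta_v\to0^+$) would fail. Finally, the Casimir iteration you propose to upgrade the $\tau$-decay is unnecessary: the Stirling estimates the paper quotes, once integrated against the exponential decay of $\Gamma(s)$ on the vertical line, already give $(1+|\tau|)^{-C}$ for every $C$ directly (in fact essentially exponential decay in $|\tau|$), so there is no ``fixed $N$'' bottleneck to overcome.
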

\begin{proof}
	By the character formula of principal series \cite[Proposition 7.6]{JL70} and the formula for $f_v$ in (\ref{TFArch}), we have
\begin{align*}
	\Tr(\pi(f_v)) &= \int_{\R^{\times}} \int_{\R} f_v \begin{pmatrix} y & x \\ 0 & 1 \end{pmatrix} \norm[y]^{i\tau-\frac{1}{2}} \sgn^l(y) dx d^{\times}y \\
	&= \frac{(2k-1)2^{2k-2}}{\pi} \int_0^{\infty} \int_{-\infty}^{\infty} \frac{y^{k+i\tau-\frac{1}{2}}}{(-x+(y+1)i)^{2k}} h_{\delta} \left( \frac{x^2+(y+1)^2}{y} \right) dx d^{\times}y.
\end{align*}
	Inserting the Mellin inversion formula $h_{\delta}(t) = \int_{(c)} (\delta t)^{-s} \Gamma(s) \frac{ds}{2\pi i}$ and changing the order of integrations, we get for $c \gg 1$
\begin{align*}
	\Tr(\pi(f_v)) &= \frac{(2k-1)2^{2k-2}}{\pi} \int_0^{\infty} \int_{-\infty}^{\infty} \frac{y^{k+i\tau-\frac{1}{2}}}{(-x+(y+1)i)^{2k}} \int_{(c)} \delta_v^{-s} \extnorm{\frac{x^2+(y+1)^2}{y}}^{-s} \Gamma(s) \frac{ds}{2\pi i} dx d^{\times}y \\
	&= \frac{(2k-1)2^{2k-2}}{\pi} \int_{(c)} \delta_v^{-s} \Gamma(s) \int_0^{\infty} \int_{-\infty}^{\infty} \frac{y^{s+k+i\tau-\frac{1}{2}}}{(-x+(y+1)i)^{2k} (x^2+(y+1)^2)^s} dx d^{\times}y \frac{ds}{2\pi i}.
\end{align*}
	Changing the variable $x \to x(y+1)$ in the innermost integral, the inner double integral splits as
	$$ \int_0^{\infty} \frac{y^{s+k+i\tau-\frac{1}{2}}}{(y+1)^{2s+2k-1}} \frac{dy}{y} \cdot \int_{-\infty}^{\infty} \frac{1}{(x-i)^{2k}(x^2+1)^s} dx. $$
	The first integral in $y$ is equal to $B\left( s+k+i\tau-\frac{1}{2}, s+k-i\tau - \frac{1}{2} \right)$. By the change of variable $x=\tan \theta$, the second integral becomes
\begin{align}
	&\quad \int_{-\infty}^{\infty} \frac{1}{(x-i)^{2k}(x^2+1)^s} dx \label{PIntR} \\
	&= 2 \cdot (-1)^k \int_0^{\frac{\pi}{2}} (\cos \theta)^{2s+2k-2} \sum_{j=0}^k \binom{2k}{2j} (-1)^{k-j} (\cos \theta)^{2j} (\sin \theta)^{2k-2j} d\theta \nonumber \\
	&= \sum_{j=0}^k \binom{2k}{2j} (-1)^j B\left( s+k+j-\frac{1}{2}, k-j+\frac{1}{2} \right) = P_k(s). \nonumber
\end{align}
	Thus we obtain
\begin{equation}
	\Tr(\pi(f_v)) = \frac{(2k-1)2^{2k-2}}{\pi} \int_{(c)} \delta_v^{-s} \Gamma(s) B\left( s+k+i\tau-\frac{1}{2}, s+k-i\tau - \frac{1}{2} \right) P_k(s) \frac{ds}{2\pi i}.
\label{LocTrA}
\end{equation}
	$P_k(s)$ has polynomial growth in $s$ in any vertical domain of the form $a \leq \Re(s) \leq b$, and is holomorphic in $\Re(s) > 1/2-k$. Moreover, from its integral representation (\ref{PIntR}), we deduce
\begin{equation}
	P_k(0) = \int_{-\infty}^{\infty} \frac{1}{(x-i)^{2k}} dx = \left. \frac{1}{(1-2k)(x-i)^{2k-1}} \right|_{x=-\infty}^{x=+\infty} = 0.
\label{Pk0}
\end{equation}
	Hence we can shift the contour to any $-1/2+\RamC<c<0$. In this region, by Stirling's estimation (see for example \cite[(B.8)]{Iw02}) we have
	$$ B\left( s+k+i\tau-\frac{1}{2}, s+k-i\tau - \frac{1}{2} \right) \ll_c \left\{ \begin{matrix} (1+\norm[\Im s])^{-\frac{1}{2}} & \text{if } \norm[\Im s] \geq \norm[\tau] \\ \frac{(1+\norm[\tau])^{2c+2k-2}}{(1+\norm[\Im s])^{2c+2k-\frac{3}{2}}} e^{\pi (\norm[\Im s] - \norm[\tau])} & \text{if } \norm[\Im s] \leq \norm[\tau] \end{matrix} \right. . $$
	Writing $y = \Im s$, we deduce that
\begin{align*}
	\extnorm{\Tr(\pi(f_v))} &\ll \delta_v^{-c} \left( \int_{\norm[y] \geq \norm[\tau]} (1+\norm[y])^A e^{-\frac{\pi}{2}\norm[y]} dy + (1+\norm[\tau])^{2c+2k-2} e^{-\pi \norm[\tau]} \int_{\norm[y] \leq \norm[\tau]} (1+\norm[y])^B e^{\frac{\pi}{2}\norm[y]} dy \right)
\end{align*} 
	for some constants $A,B$ depending only on $c$ and $k$. The desired bound follows readily.
\end{proof}

\begin{lemma}
	If $k=1$, then $\lim_{\delta \to 0^+} \Tr(D_{2}(f_v)) = -1$.
\end{lemma}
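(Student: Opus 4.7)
The plan is to compute $\Tr(D_2(f_v))$ as a function of $\delta > 0$ via a Mellin--Barnes contour integral, in complete analogy with the derivation of (\ref{LocTrA}), and then extract the limit as $\delta \to 0^+$ directly from this formula. The reason the easy argument used for $k > 1$ fails at $k = 1$ is that the pointwise limit (\ref{TFArchLim}) is no longer in $\intL^1(\PGL_2(\R))$, so dominated convergence does not apply; however the trace at each fixed $\delta > 0$ is still perfectly meaningful and admits a clean closed form.

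As a preliminary step I would reduce $\Tr(D_{2k}(f_v)) = \Pairing{D_{2k}(f_v) v_0}{v_0}$ (with $v_0$ a unit lowest weight vector) to an integral over the Iwasawa cell $g = n(x) a(y)$, using that $f_v$ and the matrix coefficient $\Pairing{D_{2k}(\cdot) v_0}{v_0}$ transform by opposite $\gp{K}$-characters under right translation. In the holomorphic upper half plane model one computes by a residue calculation
\[
	\Pairing{D_{2k}(n(x)a(y)) v_0}{v_0} = (-1)^k 2^{2k} \frac{y^k}{\bigl(x + i(y+1)\bigr)^{2k}},
\]
and combining this with the definition (\ref{TFArch}) of $f_v$ shows that the integrand is a constant multiple of $T^{-2k} h_\delta(T)$, where $T := (x^2 + (y+1)^2)/y$. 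Inserting the Mellin representation $h_\delta(T) = \int_{(c)} (\delta T)^{-s} \Gamma(s) \, ds/(2\pi i)$ for $c > 0$ and evaluating the inner $(x,y)$-integral by two Beta integrals and the Legendre duplication formula — exactly as in the derivation of (\ref{LocTrA}) — one arrives at the key identity
\[
	\Tr(D_{2k}(f_v)) = (-1)^k (2k-1) \int_{(c)} \frac{(4\delta)^{-s}\, \Gamma(s)}{s + 2k - 1} \, \frac{ds}{2\pi i}.
\]

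The last step is to push the contour to the left past the poles $s=0$ and $s = 1-2k$. For $k > 1$ these are distinct simple poles and only the residue $1/(2k-1)$ at $s = 0$ survives in the limit $\delta \to 0^+$, recovering $(-1)^k$. The hard part, and the content of the present lemma, is that at $k=1$ one has $1 - 2k = -1$, which collides with a pole of $\Gamma(s)$ and produces a genuine double pole of the integrand at $s = -1$. Using the Laurent expansions $\Gamma(s) = -(s+1)^{-1} + (\gamma - 1) + O(s+1)$ and $(4\delta)^{-s} = 4\delta \bigl[1 - (s+1)\log(4\delta) + O((s+1)^2)\bigr]$ near $s = -1$, I would compute
\[
	\Res_{s=-1} \frac{(4\delta)^{-s}\, \Gamma(s)}{s+1} = 4\delta \bigl(\log(4\delta) + \gamma - 1\bigr),
\]
which vanishes as $\delta \to 0^+$ thanks to $\delta \log \delta \to 0$. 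On the shifted contour $\Re s = c'' \in (-2,-1)$, one has $\lvert (4\delta)^{-s} \rvert = (4\delta)^{-c''} \to 0$ while Stirling provides the vertical decay of $\Gamma(s)/(s+1)$ needed for absolute convergence. Collecting the surviving residue at $s=0$, multiplied by the prefactor $(-1)(2-1) = -1$, yields $\lim_{\delta \to 0^+} \Tr(D_2(f_v)) = -1$ as claimed.
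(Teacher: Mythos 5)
Your proposal is correct and reaches the same answer by the same Mellin--Barnes mechanism as the paper: write $\Tr(D_2(f_v))$ as a vertical contour integral in the Mellin dual variable $s$, shift the contour leftward across $s=0$, and let $\delta\to 0^+$. The two proofs differ in how the Barnes integral is obtained and in how far the contour is pushed. The paper uses the representation-theoretic shortcut that $D_2$ sits inside $\pi(\norm^{1/2},\norm^{-1/2})\otimes\sgn^l$ containing the unique weight-$2$ vector, so $\Tr(D_2(f_v))$ is read off from (\ref{LocTrA}) by setting $k=1$ and replacing $i\tau$ by $1/2$ --- no new integral computation is needed. You instead compute the holomorphic matrix coefficient $\Pairing{D_{2k}(n(x)a(y))v_0}{v_0}$ directly and redo the $(x,y)$-integration; your simplified closed form
$$ \Tr(D_{2k}(f_v)) = (-1)^k(2k-1)\int_{(c)}\frac{(4\delta)^{-s}\Gamma(s)}{s+2k-1}\,\frac{ds}{2\pi i} $$
agrees with the paper's integrand after Legendre duplication, and is in fact tidier. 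The one inefficiency is your final contour shift: you push to $\Re(s)\in(-2,-1)$ and carefully analyse the double pole at $s=-1$, whose residue $4\delta(\log(4\delta)+\gamma-1)$ you correctly show tends to $0$. The paper simply stops the contour at $\Re(s)=c\in(-1/2,0)$, having crossed only the simple pole at $s=0$ (residue $-1$ after the prefactor), and observes that the remaining integral is $O(\delta^{-c})\to 0$. Since all that is needed is some $c\in(-1,0)$, the collision of the $\Gamma$-pole with the $(s+1)^{-1}$ factor never has to be confronted; your explicit handling of it is correct but does extra work, and the $\delta\log\delta$ phenomenon it exposes, while real, plays no role in the proof.
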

\begin{proof}
	The discrete series $D_2$ has a model as the unique subspace of $\Ind_{\gp{B}(\R)}^{\GL_2(\R)}(\norm^{\frac{1}{2}}, \norm^{-\frac{1}{2}}) \otimes \sgn^l$ with $l \in \{ 0,1 \}$, which contains the unique weight $2$ vector. Thus $\Tr(D_2(f_v))$ is given by specializing $k=1$ and by replacing $i\tau$ with $1/2$ in (\ref{LocTrA}), namely
\begin{align*}
	\Tr(D_2(f_v)) &= \int_{\R^{\times}} \int_{\R} f_v \begin{pmatrix} y & x \\ 0 & 1 \end{pmatrix} \sgn^l(y) dx d^{\times}y \\
	&= \frac{1}{\pi} \int_{(c)} \delta_v^{-s} \Gamma(s) B\left( s+1, s \right) \frac{-s \Gamma \left( \frac{1}{2} \right) \Gamma \left( s+\frac{1}{2} \right)}{\Gamma \left( s+2 \right)} \frac{ds}{2\pi i}.
\end{align*}
	Shifting the contour to $-1/2 < c < 0$ we cross a pole at $s=0$ and get
	$$ \Tr(D_2(f_v)) = -1 + \frac{1}{\pi} \int_{(c)} \delta_v^{-s} \Gamma(s) B\left( s+1, s \right) \frac{-s \Gamma \left( \frac{1}{2} \right) \Gamma \left( s+\frac{1}{2} \right)}{\Gamma \left( s+2 \right)} \frac{ds}{2\pi i} = -1 + O(\delta_v^{-c}). $$
	The desired formula for the limit follows readily.
\end{proof}

\begin{proposition}
	For any $k \geq 1$, we have
	$$ \lim_{\delta \to 0^+} \Tr(\pi(f_v)) = \left\{ \begin{matrix} 0 & \text{if } \pi \not\simeq D_{2k} \\ \varepsilon(1/2,D_{2k},\psi_{\R})=(-1)^k & \text{if } \pi \simeq D_{2k} \end{matrix} \right. . $$ 
	Moreover, if $\pi \simeq \pi(\norm^{i\tau}, \norm^{-i\tau})$ or $\pi(\norm^{i\tau}, \norm^{-i\tau}) \otimes \sgn$ is a principal or complementary series representation with $\norm[i\tau] < \RamC$, then for any $-1/2+\RamC < c < 0$ and any $C \gg 1$ we have
	$$ \Tr(\pi(f_v)) \ll_{c,C} \delta_v^{-c} (1+\norm[\tau])^{-C}. $$
\label{ArchCE}
\end{proposition}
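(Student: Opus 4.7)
The proposition is a unification of the three lemmas immediately preceding it, so the plan is to simply combine them, with a small amount of bookkeeping in the case $k=1$ where the $\intL^1$ convergence breaks down.

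First, the bound stated for principal and complementary series is literally the content of the second preceding lemma, so nothing new is required there. For the limit statement, the plan is to split into cases according to whether $k>1$ or $k=1$. When $k>1$, the convergence (\ref{TFArchLim}) holds in $\intL^1(\PGL_2(\R))$, and the first preceding lemma gives the result directly via Schur's lemma applied to $\pi\bigl(\lim_{\delta_v\to 0^+} f_v\bigr)$, which is $(-1)^k$ times a matrix coefficient of $D_{2k}$ and thus kills every $\pi\not\simeq D_{2k}$ by Schur orthogonality and produces the scalar $(-1)^k$ on $D_{2k}$.

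The substantive case is $k=1$. Here the $L^1$ limit fails, so I would partition the unitary dual of $\PGL_2(\R)$ into three groups: (a) $\pi\simeq D_2$, (b) principal or complementary series containing a weight $2$ vector, (c) all other $\pi$ (the trivial representation and the discrete series $D_{2m}$ with $m\ge 2$). For (a) the third preceding lemma gives $\lim_{\delta\to 0^+}\Tr(D_2(f_v))=-1=(-1)^1$, matching $\varepsilon(1/2,D_2,\psi_{\R})$. For (b), the second lemma produces the bound $|\Tr(\pi(f_v))|\ll_c \delta_v^{-c}$ for any $-1/2+\RamC<c<0$; choosing such a $c<0$ gives $\delta_v^{-c}\to 0$ as $\delta_v\to 0^+$. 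For (c), the key observation is the bi-$\gp{K}$-equivariance of the test function: the computation $J(\kappa_1 g\kappa_2)=e^{-2i\theta_1}J(g)e^{-2i\theta_2}$ shows that $f_v$ transforms under $\gp{K}\times\gp{K}$ with weights $(2k,2k)$, hence $\pi(f_v)$ vanishes unless $\pi$ contains a $\gp{K}$-weight $2k=2$ vector. Since $D_{2m}$ with $m\ge 2$ has lowest weight $2m\ge 4$ and the trivial representation has only weight $0$, in all such cases $\pi(f_v)=0$ identically and the trace is $0$.

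The only place where a subtle step appears is (b) for $k=1$: one must check that the shifted contour in (\ref{LocTrA}), which relied on $P_k(0)=0$ and the hypothesis $\Re(s)>1/2-k$ for holomorphy of the Beta factor, is actually admissible when $k=1$. This is precisely what the proof of the second lemma delivers, using $P_1(0)=0$ (equation (\ref{Pk0})) and the range $-1/2+\RamC<c<0$, which is non-empty because $\RamC<1/2$. With this confirmed, (a)-(c) exhaust the unitary dual and assemble into the claimed limit; the bound is inherited unchanged, completing the proposition.
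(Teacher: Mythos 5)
Your proposal correctly unifies the three preceding lemmas for most of the unitary dual, and your bi-$\gp{K}$-equivariance observation for group (c) is precisely the mechanism the paper uses to dismiss $\pi$'s without a weight-$2k$ vector. However, there is a concrete gap in step (b).

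Your group (b) — principal or complementary series containing a weight-$2$ vector — contains \emph{all} complementary series $\pi(\norm^{r},\norm^{-r})$ with $0<r<\tfrac{1}{2}$, but the second preceding lemma only covers the range $|r|<\RamC=7/64$: its hypothesis is $\tau\in\R\cup(-\RamC,\RamC)i$, and its contour shift to $-\tfrac{1}{2}+\RamC<c<0$ is admissible precisely because the nearest pole of the Beta factor $\Gamma\bigl(s+k-\tfrac{1}{2}-i\tau\bigr)$ stays to the left of $\Re(s)=-\tfrac{1}{2}+\RamC$. For a complementary series with parameter $\RamC\le r<\tfrac{1}{2}$ (a legitimate unitary representation of $\PGL_2(\R)$, regardless of whether it arises as a local component of a cusp form — the Proposition is a purely local statement about all $\pi$), the Gamma factor $\Gamma\bigl(s+\tfrac{1}{2}-r\bigr)$ has a pole at $s=r-\tfrac{1}{2}\ge-\tfrac{1}{2}+\RamC$, so the fixed range $(-\tfrac12+\RamC,0)$ of the second lemma does not give room to shift past $s=0$. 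Thus the bound you invoke does not apply to those $\pi$, and the limit $\Tr(\pi(f_v))\to 0$ is not yet established for them.

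The paper's proof flags exactly this as the one remaining case: ``$k=1$ and $\pi\simeq\pi(\norm^r,\norm^{-r})$ with $0<r<\tfrac{1}{2}$ a complementary series.'' The fix is small but necessary: re-run the argument following (\ref{LocTrA}) with an $r$-dependent contour, shifting to any $c$ with $r-\tfrac{1}{2}<c<0$ (always nonempty since $r<\tfrac{1}{2}$), which gives $|\Tr(\pi(f_v))|\ll_{r,c}\delta_v^{-c}\to 0$. Your proof should carve out this extra case and supply this $r$-dependent shift; as written, it silently over-extends the second lemma's range of validity.
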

\begin{proof}
	All assertions are already established except for the case $k=1$ and $\pi \simeq \pi(\norm^r, \norm^{-r})$ with $0 < r < 1/2$ being a complementary series representation. In this exceptional case, $\Tr(\pi(f_v))$ is given by specializing $k=1$ and replacing $i\tau$ with $r$ in (\ref{LocTrA}). The same argument following (\ref{LocTrA}) gives $\extnorm{\Tr(\pi(f_v))} \ll_{r,c} \delta_v^{-c}$ for any $r-1/2<c<0$, concluding the proof.
\end{proof}

	\subsection{Global Computation and Estimation}	
	
	By Lemma \ref{SphEisRes} and inserting the expansion of $K_0(x,y)$ in Theorem \ref{AutoKernSpecDSch}, we have
	$$ \frac{2 D_{\F}}{\Lambda_{\F}^{(-1)}(1)}\Res_{s=\frac{1}{2}} I(s) = \int_{[\PGL_2]} K_0(x,x) dx = \sum_{\pi} \sum_{\phi \in \Bas(\pi)} \int_{[\PGL_2]} \frac{\rpR(f)\phi(x) \overline{\phi(x)}}{\Norm[\phi]^2} dx = \sum_{\pi} \Tr(\pi(f)). $$
	The non-vanishing contribution of $\pi$ are those with usual conductor $\Cond(\pi_\fin) = \idl{N}^3$ by Proposition \ref{NArchC}. We regroup such $\pi$ according to its archimedean components as follows:
\begin{itemize}
	\item[(1)] There is a subset $I \subset V_{\infty}$ such that $\pi_v \simeq D_{2l_v}$ or $D_{2l_v} \otimes \sgn$ for some $l_v \in \Z_{\geq 1}$ at each $v \in I$,
	\item[(2)] while at each $v \mid \infty$ \& $v \notin I$, $\pi_v$ is a non-discrete series representation.
\end{itemize} 
	The non-trivial contribution comes from those for which $l_v \leq k_v$ at each $v \in I$, since $\pi_v(f_v)$ picks up the weight $2k_v$ vector. We write $\vec{l} \leq \vec{k}$ in this case, and denote by $\mathcal{A}(I,\vec{l}; \idl{N}^3)$ the set of cuspidal representations $\pi$ satisfying (1), (2) and $\Cond(\pi_{\fin}) = \idl{N}^3$. Here $\Cond(\pi_{\fin})$ is the usual conductor/level ideal of $\pi$. It has an archimedean counterpart $\Cond(\pi_{\infty}) = \sideset{}{_{v \mid \infty}} \prod \Cond(\pi_v)$, where
	$$ \Cond(\pi_v) = \left\{ \begin{matrix} (1+\norm[\tau])^2 & \text{if } \pi_v \simeq \pi(\norm^{i\tau}, \norm^{-i\tau}) \text{ or } \pi(\norm^{i\tau}, \norm^{-i\tau}) \otimes \sgn \\ 1+k^2 & \text{if } \pi_v \simeq  D_{2k} \text{ or } D_{2k} \otimes \sgn \end{matrix} \right. . $$
	Note $\mathcal{A}(V_{\infty},\vec{k};\idl{N}^3)=\mathcal{A}(\vec{k};\idl{N}^3)$. By Proposition \ref{ArchCE}, we have for any $0 < c < 1/2-\vartheta$ and $C \gg 1$,
	$$ \sum_{\pi \in \mathcal{A}(I, \vec{l}; \idl{N}^3)} \Tr(\pi(f)) = \sum_{\pi \in \mathcal{A}(I, \vec{l}; \idl{N}^3)} \prod_{v \mid \infty, v \notin I} \delta_v^c \prod_{v \in I} \Tr(\pi_v(f_v)) \prod_{\vp < \infty} \varepsilon \left( \frac{1}{2},\pi_{\vp}, \psi_{\vp} \right) \cdot O(\Cond(\pi_{\infty})^{-C}). $$
	The quantity $\prod_{v \in I} \Tr(\pi_v(f_v)) = \prod_{v \in I} \Tr(D_{2l_v}(f_v))$ depends only on $\vec{l}$. By (weak) Weyl's law, the sum over $\pi$ of $\Cond(\pi_{\infty})^{-A}$ is absolutely convergent for some absolute constant $A > 0$. Hence we get
\begin{align}
	\lim_{\vec{\delta} \to \vec{0}^+} \Res_{s=\frac{1}{2}} I(s) &= \frac{\Lambda_{\F}^{(-1)}(1)}{2 D_{\F}} \lim_{\vec{\delta} \to \vec{0}^+} \sum_{I, \vec{l} \leq \vec{k}} \sum_{\pi \in \mathcal{A}(I, \vec{l}; \idl{N}^3)} \Tr(\pi(f)) \nonumber \\
	&= \frac{\Lambda_{\F}^{(-1)}(1)}{2 D_{\F}} \lim_{\vec{\delta} \to \vec{0}^+} \sum_{\vec{l} \leq \vec{k}} \sum_{\pi \in \mathcal{A}(V_{\infty}, \vec{l}; \idl{N}^3)} \Tr(\pi(f)) \nonumber \\
	&= \frac{\Lambda_{\F}^{(-1)}(1)}{2 D_{\F}^2} \sum_{\pi \in \mathcal{A}(\vec{k},\idl{N}^3)} \varepsilon \left( \frac{1}{2},\pi \right), \label{ArchFinal}
\end{align}
	where in the last equation we applied Proposition \ref{NArchC} \& \ref{ArchCE} and $\Vol(\sideset{}{_{\vp < \infty}} \prod\PGL(\vo_{\vp})) = D_{\F}^{-1}$.

\section{Computation of $\mathrm{GL}_1$ Side}

	\subsection{Reparametrization and First Reductions}
	
	We turn to the computation of $I(s)$ (see (\ref{JZGeom})) for our test function. First of all we notice that our test function $f$ satisfies both conditions in Lemma \ref{SimpleRSF}. In fact, by the explicit formula (\ref{TFNArch}), we see that $f_{\vp}$ for $\vp \mid \idl{N}$ satisfies Lemma \ref{SimpleRSF} (2); since $f_{\vp} = \widetilde{f_{\vp}^b}$ for $\vp \mid \idl{N}$ is an average of conjugation of $f_{\vp}^b$, which satisfies Lemma \ref{SimpleRSF} (1), $f_{\vp}$ satisfies it, too. As a consequence, we get
\begin{equation}
	I(s) = \sum_{\E} I_{\E}(s) = \sum_{\E} \sum_{1 \neq \lambda \in \E^{\times}/\F^{\times}} I(s; \lambda; f, \Phi),
\label{EllTermDef}
\end{equation}
	where for every quadratic field extension $\E/\F$ we have identified $\E^{\times}$ with its image under a fixed embedding $\E \to \Mat_2(\F)$ as a non-split torus, and the summand is defined to be
	$$ I(s; \lambda; f, \Phi) = \frac{1}{2} \cdot \int_{\E_{\A}^{\times} \backslash \GL_2(\A)} f(x^{-1} \lambda x) \cdot \int_{\E_{\A}^{\times}} \Phi((0,1)tx) \norm[\det tx]_{\A}^{1/2+s} d^{\times}t dx. $$
	
\begin{lemma}
	$I(s;\lambda;f,\Phi)$ is independent of the embedding $\E \hookrightarrow \Mat_2(\F)$.
\label{ConjInv}
\end{lemma}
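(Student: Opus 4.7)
The plan is to invoke Skolem--Noether: any two $\F$-algebra embeddings $\iota_1,\iota_2:\E\hookrightarrow \Mat_2(\F)$ differ by conjugation by some $\gamma\in\GL_2(\F)$, i.e.\ $\iota_2(\mu)=\gamma^{-1}\iota_1(\mu)\gamma$ for all $\mu\in\E$. Write $T_i:=\iota_i(\E^{\times})\subset\GL_2$, so that $T_{2,\A}=\gamma^{-1}T_{1,\A}\gamma$, and let $I_i(s;\lambda)$ denote the value of $I(s;\lambda;f,\Phi)$ computed using $\iota_i$. The goal is to transform $I_2(s;\lambda)$ into $I_1(s;\lambda)$ by a short chain of measure-preserving substitutions, each justified by left-invariance of Haar measure or by the product formula.

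First, I would perform the outer substitution $y=\gamma x$, which maps $T_{2,\A}\backslash\GL_2(\A)$ measure-preservingly onto $T_{1,\A}\backslash\GL_2(\A)$. The elliptic kernel summand transforms as $f(x^{-1}\iota_2(\lambda)x)=f(y^{-1}\iota_1(\lambda)y)$. In the inner torus integral I would then substitute $t=\gamma^{-1}s\gamma$ with $s\in T_{1,\A}$; this is measure-preserving because both $T_{i,\A}$ carry the Haar measure pushed forward from a single fixed Haar measure on the abstract group $\E_{\A}^{\times}$ via $\iota_i$. After these two substitutions, the inner integrand reads $\Phi((0,1)\gamma^{-1}sy)\,|\det(\gamma^{-1}sy)|_{\A}^{1/2+s}$.

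The decisive step---the only one that is not automatic---is to reabsorb the extraneous $\gamma^{-1}$. Here I would use that $\iota_1$ realizes $\F^{2}$ (as a right module) as a one-dimensional $\E$-vector space with basis vector $(0,1)$, whence there is a unique $\mu_0\in\E^{\times}$ with $(0,1)\gamma^{-1}=(0,1)\iota_1(\mu_0)$. A left translation $s\mapsto s':=\iota_1(\mu_0)s$ on $T_{1,\A}$ is measure-preserving and turns $(0,1)\gamma^{-1}sy$ into $(0,1)s'y$, while $\det(s)=\Nr_{\E/\F}(\mu_0)^{-1}\det(s')$. The resulting expression is exactly the one defining $\int_{T_{1,\A}}\Phi((0,1)s'y)|\det(s'y)|_{\A}^{1/2+s}\,d^{\times}s'$, multiplied by the global scalar $|\det\gamma|_{\A}^{-(1/2+s)}\,|\Nr_{\E/\F}(\mu_0)|_{\A}^{-(1/2+s)}$, which equals $1$ by the product formula since $\det\gamma\in\F^{\times}$ and $\mu_0\in\E^{\times}$. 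Collecting everything yields $I_2(s;\lambda)=I_1(s;\lambda)$.

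The only point that needs real care---more bookkeeping than obstacle---is that the measures line up: once a Haar measure on $\E_{\A}^{\times}$ is fixed and pushed forward along both $\iota_i$, and the left-invariant Haar measure on $\GL_2(\A)$ is used for the quotients $T_{i,\A}\backslash\GL_2(\A)$, every substitution above is manifestly measure-preserving. The conceptual content of the argument is then just the fact that $\F^{2}$ is a free module of rank one over $\iota_1(\E)$, together with two applications of the product formula.
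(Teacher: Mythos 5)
Your proof is correct and follows the same broad strategy as the paper: Skolem--Noether to relate the two embeddings by conjugation by some $\gamma\in\GL_2(\F)$, the outer change of variable $y=\gamma x$, and finally absorption of the stray $\gamma^{-1}$ from $\Phi((0,1)\gamma^{-1}\iota_1(t)y)$ into the torus integral using the product formula. The one place where you diverge is the absorption step. The paper first establishes the decomposition $\GL_2(\F)=\E^{\times}\gp{B}(\F)$ (by identifying $\E\simeq\F^2$ and observing the stabilizer of $e_1$ lies in $\gp{B}(\F)$), writes $\gamma=t_0b_0$, commutes $t_0$ through the abelian torus, and then uses that $(0,1)b_0^{-1}$ is a scalar multiple of $(0,1)$ because $b_0$ is triangular. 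You instead observe directly that $\F^2$ is a free rank-one right $\E$-module via $\iota_1$ (the same underlying transitivity fact, stated at the module level rather than as a group decomposition), so that $(0,1)\gamma^{-1}=(0,1)\iota_1(\mu_0)$ for a unique $\mu_0\in\E^{\times}$, which is then removed by a single left translation $s\mapsto\iota_1(\mu_0)s$ on the torus. Your route does the same job with one explicit substitution in place of the paper's decomposition plus two implicit ones, and it makes the two invocations of the product formula explicit; this is a minor but genuine streamlining of the same idea.
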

\begin{proof}
	Any embedding $\iota: \E \to \Mat_2(\F)$ is realized by choosing an $\F$-basis $e_1,e_2$ of $\E$ and taking
	$$ (xe_1,xe_2) = (e_1,e_2) \iota(x), \quad \forall x \in \E. $$
	In this way, both $\E^{\times}$ and $\GL_2(\F)$ acts on $\E^{\times}$ transitively. Since the stabilizer of $e_1$ in $\GL_2(\F)$ is contained in $\gp{B}(\F)$, we get the decomposition
	$$ \GL_2(\F) = \E^{\times} \gp{B}(\F). $$
	Writing $\GL_2(\F) \ni g_0 = t_0 b_0$ for $t_0 \in \E^{\times}, b_0 \in \gp{B}(\F)$, we get by the change of variables $x \mapsto g_0 x$
\begin{align*}
	I(s;\lambda;f,\Phi) &= \frac{1}{2} \int_{g_0^{-1}\E_{\A}^{\times}g_0 \backslash \GL_2(\A)} f(x^{-1} g_0^{-1} \lambda g_0 x) \int_{\E_{\A}^{\times}} \Phi((0,1)tg_0x) \norm[\det tx]_{\A}^{1/2+s} d^{\times}t dx \\
	&= \frac{1}{2} \int_{b_0^{-1}\E_{\A}^{\times}b_0 \backslash \GL_2(\A)} f(x^{-1} b_0^{-1} \lambda b_0 x) \int_{\E_{\A}^{\times}} \Phi((0,1)b_0^{-1}tb_0x) \norm[\det tx]_{\A}^{1/2+s} d^{\times}t dx \\
	&= I(s; b_0^{-1}\lambda b_0; f, \Phi) = I(s; g_0^{-1}\lambda g_0; f, \Phi).
\end{align*}
	This is the required independence of embeddings since any two such embeddings are conjugate by an element of $\GL_2(\F)$ by Skolem-Noether's theorem.
\end{proof}
\begin{corollary}
	If $\bar{\lambda}$ denotes the conjugate of $\lambda$ in $\E$, then we have
	$$ I(s;\lambda;f,\Phi) = I(s;\bar{\lambda};f,\Phi). $$
\label{ConjID}
\end{corollary}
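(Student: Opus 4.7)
The plan is to realize the Galois involution of $\E/\F$ as inner conjugation in $\GL_2(\F)$, and then to invoke the conjugation invariance already established in Lemma \ref{ConjInv}.

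First I would produce an element $w \in \GL_2(\F)$ implementing conjugation. If $\iota : \E \hookrightarrow \Mat_2(\F)$ is the fixed $\F$-algebra embedding and $\sigma$ denotes the non-trivial element of $\Gal(\E/\F)$, then $\iota \circ \sigma$ is a second $\F$-algebra embedding of $\E$ into $\Mat_2(\F)$. By Skolem--Noether (the same ingredient invoked at the end of the proof of Lemma \ref{ConjInv}), there exists $w \in \GL_2(\F)$ with $\iota(\bar\lambda) = w^{-1}\iota(\lambda)w$ for every $\lambda \in \E$. Concretely, writing $\E = \F[\sqrt{d}]$ and using the embedding $a+b\sqrt{d} \mapsto \begin{pmatrix} a & bd \\ b & a \end{pmatrix}$, one may take $w = \begin{pmatrix} 1 & 0 \\ 0 & -1 \end{pmatrix}$; the verification is a one-line matrix computation. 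Note also that $w$ normalizes $\E^\times$, as it must, so the ``new'' torus attached to $\bar\lambda$ coincides with the original one.

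Having $w$ in hand, I would then apply the conjugation invariance contained inside the proof of Lemma \ref{ConjInv}. That proof shows, via the change of variable $x \mapsto g_0 x$ together with the decomposition $\GL_2(\F) = \E^\times \gp{B}(\F)$, that
$$ I(s;\lambda;f,\Phi) = I(s;g_0^{-1}\lambda g_0;f,\Phi) $$
for every $g_0 \in \GL_2(\F)$. Specializing to $g_0 = w$ and substituting $w^{-1}\lambda w = \bar\lambda$ yields the claimed identity.

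I do not expect any serious obstacle: all of the analytic content is absorbed into Lemma \ref{ConjInv}, and the corollary is simply the algebraic observation that the non-trivial Galois automorphism of a quadratic \'etale extension $\E/\F$ becomes inner once $\E^\times$ is viewed as a maximal torus in $\GL_{2,\F}$.
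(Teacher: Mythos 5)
Your proof is correct and takes essentially the same route as the paper: invoke Skolem--Noether to realize the Galois conjugation on $\E$ as inner conjugation by some $w \in \GL_2(\F)$, then apply the conjugation invariance of Lemma \ref{ConjInv}. The extra explicit choice of $w$ in a particular model is fine but not needed.
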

\begin{proof}
	By Skolem-Noether's theorem again, the conjugation on $\E$ is induced by conjugation by an element in $\GL_2(\F)$. Thus the above equality follows from Lemma \ref{ConjInv}.
\end{proof}
\begin{remark}
	The elliptic orbital integrals are independent of the embeddings. Lemma \ref{ConjInv} is an analogue of this result.
\end{remark}
	
	The above parametrization in terms of elements in quadratic field extensions is not convenient for compuation or estimation. We need to give a more convenient one in (\ref{EllTermDef}). Let
	$$ E_2(\F) = \left\{ (a,b) \in \F \times \F^{\times} \ \middle| \ a^2-4b \text{ is not a square in } \F^{\times} \right\}, $$
	and consider an action of $\F^{\times}$ on $E_2(\F)$ given by $t \circ (a,b) := (ta,t^2b)$. We have a surjective map
	$$ \sideset{}{_{\E}} \bigcup \left( \E^{\times}/\F^{\times} - \{ 1 \} \right) \to \F^{\times} \backslash E_2(\F) =: [E_2(\F)], \quad \lambda \mapsto [(\Tr(\lambda), \Nr(\lambda))]. $$
	The pre-image of every element $[a,b]:=\F^{\times} \circ (a,b)$ consists of
\begin{itemize}
    \item precisely two elements $\F^{\times}\lambda$ and $\F^{\times}\bar{\lambda}$ with $\Tr(\lambda)=a$ and $\Nr(\lambda)=b$, if $a \neq 0$;
    \item only one element $\F^{\times}\lambda = \F^{\times}\bar{\lambda}$ if $a=0$.
\end{itemize}
\begin{remark}
	For a nontrivial $\GL_2(\F)$ conjugacy class $C$ inside $\PGL_2(\F) = \gp{Z}(\F)\bs \GL_2(\F)$ with representative $\lambda\in \GL_2(\F)$, set for $x \in \PGL_2(\A)$
$$
K_{C}(x,x) = \sum_{\lambda\in C,\lambda\notin \gp{B}(\F)/\gp{Z}(\F)}
f(x^{-1}\lambda x)
$$
which corresponds to the $C$-part of of the kernel function $K(x,x)$. In \cite[\S 2.2]{JZ87}, the authors claim that 
\begin{align}\label{eq:kernel:correction:1}
K_{C}(x,x) =
\sum_{
\substack{
\gamma\in \gp{G}_\lambda(\F)\bs \GL_2(\F),
\\
\gamma^{-1}\lambda \gamma\notin \gp{B}(\F)\backslash \gp{Z}(\F)}
}f(x^{-1}\gamma^{-1}\lambda \gamma x)
\end{align}
where $\gp{G}_\lambda(\F)$ is the centralizer of $\lambda$ inside $\GL_2(\F)$. This is actually NOT true in general. In fact, by definition we have
$$
K_C(x,x) = 
\sum_{
\substack{
\gamma\in \mathrm{Cent}_\lambda(\F)\backslash \PGL_2(\F),
\\
\gamma^{-1}\lambda \gamma\notin \gp{B}(\F)\backslash \gp{Z}(\F)}
}f(x^{-1}\gamma^{-1}\lambda \gamma x)
$$
where $\mathrm{Cent}_\lambda(\F)$ is the centralizer of $\lambda$ in $\PGL_2(\F)$ defined by 
\begin{align*}
\mathrm{Cent}_\lambda(\F) 
=& 
\{
g\in \PGL_2(\F)
|\quad 
g^{-1}\lambda g\equiv \lambda \mod \gp{Z}(\F)
\}
\\
=&
\{
g\in \PGL_2(\F)
|\quad 
g^{-1}\lambda g= z\lambda \text{ for some $z\in \gp{Z}(\F)$}
\}.
\end{align*}
The condition in the last line holds if and only if 
\begin{align}\label{eq:kernel:correction:2}
\tr(\lambda) = z\tr(\lambda),\quad \det(\lambda) = z^2\det(\lambda).
\end{align}
When $\tr(\lambda) \neq 0$, \eqref{eq:kernel:correction:2} holds only when $z=1$. Hence
$$
\mathrm{Cent}_\lambda(\F) = \gp{Z}(\F)\backslash \gp{G}_\lambda(\F)\quad \text{and}
\quad 
\mathrm{Cent}_\lambda(\F)\backslash \PGL_2(\F) \simeq 
\gp{G}_\lambda(\F)\backslash \GL_2(\F).
$$
In this case the equality \eqref{eq:kernel:correction:1} holds and the discussion of \cite[\S 2.2]{JZ87} applies to the conjugacy class $C$. When $\tr(\lambda) = 0$, \eqref{eq:kernel:correction:2} holds when $z^2 = 1 \Leftrightarrow z=\pm 1$. In this case, by Skolem-Noether's theorem, there exists $g_0\in \GL_2(\F)$ such that 
$$
g_0^{-1}\lambda g_0 = -\lambda.
$$
In other words, within $\PGL_2(\F)$, $\lambda$ and $-\lambda$ are conjugate to each other. Therefore 
$$
\mathrm{Cent}_\lambda(\F) = 
\gp{Z}(\F)\backslash 
\bigg\{
\gp{G}_\lambda(\F)
\bigsqcup
g_0^{-1}\gp{G}_\lambda(\F)g_0
\bigg\} \quad \Rightarrow \quad [\mathrm{Cent}_\lambda(\F): \gp{Z}(\F)\backslash \gp{G}_\lambda(\F)] = 2.
$$
In this case the discussion in \cite[\S 2.2]{JZ87} still applies, except that we need to divide the corresponding elliptic and hyperbolic contribution by $2$.
\end{remark}
\begin{definition}
	For $[a,b] \in [E_2(\F)] $, we shall write
	$$ I(s;[a,b];f,\Phi) := \sideset{}{_{\substack{\lambda \in \F^{\times} \backslash \E^{\times} \\ [\Tr(\lambda), \Nr(\lambda)] = [a,b]}}} \sum I(s;\lambda;f,\Phi). $$
	By Corollary \ref{ConjID} and the above discussion, we have
	$$ I(s;[a,b];f,\Phi) = 2^{-\mathbbm{1}_{a=0}} \int_{\E_{\A}^{\times} \backslash \GL_2(\A)} f(x^{-1} \lambda x) \cdot \int_{\E_{\A}^{\times}} \Phi((0,1)tx) \norm[\det tx]_{\A}^{1/2+s} d^{\times}t dx $$
	for any $\lambda$ with $(\Tr(\lambda),\Nr(\lambda))=(a,b)$.
\end{definition}

\noindent We deduce the following re-parametrized version of (\ref{EllTermDef})
\begin{equation}
	I(s) = \sum_{[a,b] \in [E_2(\F)]} I(s; [a,b]; f, \Phi).
\label{EllTermRP}
\end{equation}

\begin{lemma}
	Assume $f(x^{-1}\lambda x) \neq 0$ for some $\F$-elliptic $\lambda$ and $x \in \PGL_2(\A)$. Write $\E=\F[\lambda]$ for the quadratic field generated by $\lambda$ and let $(\Tr(\lambda),\Nr(\lambda))=(a,b)$. Then we have
\begin{itemize}
	\item[(1)] $\idl{N}$ is a square in the narrow class group $\Cl_+(\F)$ and every prime ideal $\vp \mid \idl{N}$ is ramified in $\E$;
	\item[(2)] For any fractional ideal $\idl{J}$ such that $\idl{J}^{-2}\idl{N}$ is a principal ideal, we can find $z \in \F^{\times}$ such that $z^2 b$ is a totally positive generator of $\idl{J}^{-2}\idl{N}$ and $a/(zb) \in \idl{J}$.
\end{itemize} 
\label{GL1NV}
\end{lemma}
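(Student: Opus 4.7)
My plan is to analyze the non-vanishing assumption $f(x^{-1}\lambda x)\neq 0$ place by place. Writing $f=\otimes_v f_v$, the assumption forces local support conditions on $x_v^{-1}\lambda x_v$ at every place. At each real $v\mid\infty$, the explicit formula (\ref{TFArch}) shows $f_v$ vanishes on matrices of non-positive determinant, so $\det(x_v^{-1}\lambda x_v)=b$ being in the support forces $b$ to be totally positive. At each $\vp\nmid\idl{N}$, $f_\vp=\mathbbm{1}_{\PGL_2(\vo_\vp)}$, so I may write $x_\vp^{-1}\lambda x_\vp=z_\vp\kappa_\vp$ with $z_\vp\in\gp{Z}_\vp$ and $\kappa_\vp\in\GL_2(\vo_\vp)$; elementary valuation computations yield $\ord_\vp(b)=2\ord_\vp(z_\vp)$ (even) and $\ord_\vp(a)\geq\ord_\vp(z_\vp)=\ord_\vp(b)/2$. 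At each $\vp\mid\idl{N}$, the support description (\ref{TFNArch}) lets me write $x_\vp^{-1}\lambda x_\vp=\begin{pmatrix}0&1\\ \varpi_\vp&0\end{pmatrix}z\begin{pmatrix}x_1&r_1\\ \varpi_\vp r_2&x_2\end{pmatrix}$ with $x_1,x_2\in\vo_\vp^\times$, $r_1,r_2\in\vo_\vp$; computing trace and determinant directly gives $\ord_\vp(b)=2\ord_\vp(z)+1$ (odd) and $\ord_\vp(a)\geq(\ord_\vp(b)+1)/2$.

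For the ramification statement in (1), I would argue via the Bruhat--Tits tree: the Weyl element $\begin{pmatrix}0&1\\ \varpi_\vp&0\end{pmatrix}$ lies in the non-trivial coset of the normalizer of $\RI_\vp$ modulo $\RI_\vp\gp{Z}_\vp$, so $\lambda$ acts on the tree by swapping the two endpoints of a unique edge; a non-central torus element coming from an unramified or split quadratic \'etale algebra must fix a vertex and cannot swap adjacent ones, forcing $\E_\vp/\F_\vp$ to be ramified. (For $\vp\nmid 2$, alternatively, one checks directly that $\ord_\vp(a^2-4b)=\ord_\vp(b)$ is odd, so $\F_\vp[\sqrt{a^2-4b}]$ is ramified.) Combined with the parity calculation showing $\ord_\vp((b)\idl{N}^{-1})\in 2\Z$ at every finite place, this yields a unique factorization $(b)=\idl{N}\idl{L}^2$ for a fractional ideal $\idl{L}$; since $b$ is totally positive, $[\idl{N}]=[\idl{L}^{-1}]^2$ in $\Cl_+(\F)$, which together with the ramification completes (1).

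For (2), given $\idl{J}$ with $\idl{J}^{-2}\idl{N}=(N)$ ($N$ totally positive), the equation $(z^2b)=\idl{J}^{-2}\idl{N}$ is equivalent to $(z)^2=(\idl{J}\idl{L})^{-2}$, hence by uniqueness of ideal square roots to $(z)=(\idl{J}\idl{L})^{-1}$. Total positivity of $z^2b$ is then automatic, and the lower bounds on $\ord_\vp(a)$ from the previous paragraph give exactly $\ord_\vp(a)-\ord_\vp(z)-\ord_\vp(b)\geq\ord_\vp(\idl{J})$ at every finite place, i.e., $a/(zb)\in\idl{J}$. What remains is to produce $z\in\F^\times$ with $(z)=(\idl{J}\idl{L})^{-1}$, equivalently to show $\idl{J}\idl{L}$ is principal; here I would exploit the $\F^\times$-action $(a,b)\mapsto(ta,t^2b)$ on the orbit $[a,b]$, which replaces $\idl{L}$ by $(t)\idl{L}$ and $b/N$ by $t^2(b/N)$. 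Since $(\idl{J}\idl{L})^2=(b/N)$ with $b/N$ totally positive, choosing $t$ appropriately aims to arrange $t^2(b/N)$ as the square of an element in $\F^\times$ (times a totally positive unit), giving the required principal generator.

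The main obstacle is in this last step: $(\idl{J}\idl{L})^2$ principal does not by itself imply $\idl{J}\idl{L}$ is principal when $\Cl(\F)[2]$ is non-trivial. In the concrete fields of the corollaries ($\Q$, $\Q(\sqrt{2})$, $\Q(\sqrt{5})$, all PIDs), this difficulty disappears, so the construction is unobstructed. In the general setting, one either appeals to the freedom to choose $\idl{J}$ within its $2$-torsion coset in $\Cl_+(\F)$, or verifies that the orbits actually contributing to the integral $I(s)$ force the class $[\idl{J}\idl{L}]$ to be trivial; either approach requires a more delicate class-group argument beyond the direct local computation above.
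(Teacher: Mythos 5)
Your proposal follows the same local-analysis strategy as the paper, and your valuation computations at finite places are correct. The main differences are in two spots. For ramification at $\vp\mid\idl{N}$, the paper observes directly that $z_\vp\lambda$ satisfies an Eisenstein polynomial over $\vo_\vp$ (trace in $\vp\vo_\vp$, norm in $\varpi_\vp\vo_\vp^\times$), which gives ramification uniformly at all residue characteristics in one line; your discriminant-parity argument needs $\vp\nmid 2$, and your Bruhat--Tits fallback, while workable, contains a small inaccuracy (split non-central torus elements need \emph{not} fix a vertex --- they can be hyperbolic translations --- though it remains true that they cannot invert an edge, so the conclusion survives). For part (2), the paper does not first build $\idl{L}$ and then try to trivialize $[\idl{J}\idl{L}]$; instead it works idelically: the local scalars $(z_\vp)_\vp$ form a finite idele, and the paper chooses $\idl{J}$ to represent \emph{its} class in $\Cl(\F)=\F^\times\backslash\A_{\fin}^\times/\widehat\vo^\times$, so that a global $z$ with $z^{-1}z_\vp\in\delta_\vp\vo_\vp^\times$ exists by construction. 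This produces $z$ and $\idl{J}$ simultaneously and sidesteps the square-root-of-an-ideal issue entirely.

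That said, your diagnosis of the obstruction is sharp and worth making explicit: the idelic argument only establishes the existence of \emph{some} $\idl{J}$ (namely the one in the class determined by $(z_\vp)$), not the ``for any $\idl{J}$'' wording of the lemma. If $\Cl(\F)[2]\neq 1$ there can be several classes $[\idl{J}]$ with $[\idl{J}]^2=[\idl{N}]$, and neither your route nor the paper's literally covers an arbitrary choice among them; your observation that $(\idl{J}\idl{L})^2$ principal does not force $\idl{J}\idl{L}$ principal is exactly this point. For the fields treated in the corollaries (all of narrow class number one) the issue vanishes, so your proposed resolution for those cases is correct. If you intend to claim the ``for any $\idl{J}$'' version in general you would indeed need the extra class-group input you sketch at the end; as written, both your proof and the paper's only deliver the ``for a suitable $\idl{J}$'' version, which suffices for the parametrization when $\Cl(\F)[2]=1$.
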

\begin{proof}
	By our choice of local test functions (\ref{TFNArch}), at any $\vp \mid \idl{N}$ we can find $z_{\vp} \in \F_{\vp}^{\times}$ so that $z_{\vp} \lambda$ is conjugate to an element in $\begin{pmatrix} \vp \vo_{\vp} & \vo_{\vp}^{\times} \\ \varpi_{\vp} \vo_{\vp}^{\times} & \vp \vo_{\vp} \end{pmatrix}$. Hence we have
	$$ \Tr(z_{\vp} \lambda) \in \vp \vo_{\vp}, \quad \Nr(z_{\vp} \lambda) \in \varpi_{\vp} \vo_{\vp}^{\times}, \quad \forall \vp \mid \idl{N}. $$
	In particular, $z_{\vp}\lambda$ is a root of an Eisenstein polynomial over $\F_{\vp}$. Hence $\vp \mid \idl{N}$ is (totally) ramified in $\E$.
	
\noindent Similarly, at $\vp \nmid \idl{N}$, we can find $z_{\vp} \in \F_{\vp}^{\times}$ so that
	$$ \Tr(z_{\vp} \lambda) \in \vo_{\vp}, \quad \Nr(z_{\vp} \lambda) \in \vo_{\vp}^{\times}. $$
	Since $\F^{\times} \backslash \A_{\fin}^{\times} / \widehat{\vo}^{\times} \simeq \Cl(\F)$, there is a fractional ideal $\idl{J}$ represented by a finite idele $(\delta_{\vp})_{\vp}$, an $z \in \F^{\times}$ such that $z^{-1}z_{\vp} \in \delta_{\vp} \vo_{\vp}^{\times}$ for all $\vp < \infty$. It follows that $\Nr(z\lambda) \in \delta_{\vp}^{-2} \vp \vo_{\vp}^{\times}$ for $\vp \mid \idl{N}$, and $\Nr(z\lambda) \in \delta_{\vp}^{-2} \vo_{\vp}^{\times}$ for $\vp \nmid \idl{N}$. Hence $\idl{J}^{-2} \idl{N} = (\Nr(z\lambda))$ is a principal ideal, i.e., $\idl{N}$ is a square in $\Cl(\F)$, and $z^2 b = \Nr(z\lambda)$ is a generator of $\idl{J}^{-2}\idl{N}$. Since the support of $f_v$ at $v \mid \infty$ is contained in the subgroup of positive determinant elements, we deduce that $b=\det(x^{-1}\lambda x)$, hence $z^2b$ is totally positive. Moreover, we have
	$$ \Tr(z\lambda) \in \Tr(\delta_{\vp}^{-1}z_{\vp}\lambda \vo_{\vp}^{\times}) \in \left\{ \begin{matrix} \delta_{\vp}^{-1} \vp \vo_{\vp} & \text{if } \vp \mid \idl{N} \\ \delta_{\vp}^{-1} \vo_{\vp} & \text{if } \vp \nmid \idl{N} \end{matrix} \right. . $$
	Thus $za = \Tr(z\lambda) \in \idl{J}^{-1} \idl{N} = z^2b \idl{J}$, i.e., $a/(zb) \in \idl{J}$.
\end{proof}

\begin{corollary}[Theorem \ref{MainGenF} (1)]
	If $\idl{N}$ is not a square in the narrow class group $\Cl_+(\F)$, then we have $I(s)=0$ and there is no bias.
\label{NSNB}
\end{corollary}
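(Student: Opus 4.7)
The plan is to reduce everything to Lemma \ref{GL1NV}, which essentially already does all the work. Starting from the reparametrized expansion (\ref{EllTermRP}),
\[
I(s) = \sum_{[a,b] \in [E_2(\F)]} I(s;[a,b];f,\Phi),
\]
each summand is (up to the factor $2^{-\mathbbm{1}_{a=0}}$) an integral over $\E_{\A}^{\times}\backslash\GL_2(\A)$ of the product $f(x^{-1}\lambda x)\cdot(\text{Godement-type integral in }\Phi)$, where $\lambda$ is any element of $\E^\times$ with $(\Tr(\lambda),\Nr(\lambda)) = (a,b)$. Since $a^2-4b$ is by hypothesis not a square in $\F^\times$, any such $\lambda$ is $\F$-elliptic, so the integrand $f(x^{-1}\lambda x)$ is exactly of the shape controlled by Lemma \ref{GL1NV}.

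Next, I would invoke Lemma \ref{GL1NV} (1) directly: if $f(x^{-1}\lambda x) \neq 0$ for some $\F$-elliptic $\lambda$ and some $x \in \PGL_2(\A)$, then $\idl{N}$ must be a square in the narrow class group $\Cl_+(\F)$. Taking the contrapositive, under the hypothesis of the corollary the function $x \mapsto f(x^{-1}\lambda x)$ is identically zero on $\PGL_2(\A)$ for every $\F$-elliptic $\lambda$. Since every $\lambda$ contributing to $I(s;[a,b];f,\Phi)$ is $\F$-elliptic, each summand in (\ref{EllTermRP}) vanishes identically, and therefore $I(s) \equiv 0$ as a function of $s$.

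Finally, to translate this into a statement about the bias, I would appeal to the identity (\ref{ArchFinal}), which expresses
\[
\sum_{\pi \in \mathcal{A}(\vec{k},\idl{N}^3)} \varepsilon\!\left(\tfrac{1}{2},\pi\right) = \frac{2D_\F^2}{\Lambda_\F^{(-1)}(1)} \lim_{\vec{\delta}\to\vec{0}^+} \Res_{s=1/2} I(s).
\]
Since $I(s) \equiv 0$, the right-hand side is zero, and hence $B(\vec{k},\idl{N}^3) = 0$.

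There is really no hard step here: the entire content sits inside Lemma \ref{GL1NV}, where the narrow-class-group obstruction was already extracted from the local Iwahori-type support of $f_{\vp}$ at $\vp \mid \idl{N}$ together with the totally-positive-determinant support of $f_v$ at the archimedean places. The only thing to be careful about is ensuring that the vanishing is uniform in $x$ so that the adelic integral (and not just the integrand at a single point) vanishes, but this is immediate from Lemma \ref{GL1NV} since the obstruction depends only on $\lambda$, not on $x$.
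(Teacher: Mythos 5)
Your proof is correct and follows the same route as the paper: take the contrapositive of Lemma \ref{GL1NV}(1) to kill every integrand $f(x^{-1}\lambda x)$ for $\F$-elliptic $\lambda$, hence every $I(s;[a,b];f,\Phi)$ in (\ref{EllTermRP}), hence $I(s)\equiv 0$, and then read off $B(\vec{k},\idl{N}^3)=0$ from (\ref{ArchFinal}). The paper states this in two sentences; you spell out the contrapositive and the passage to the bias explicitly, but the content is identical.
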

\begin{proof}
	By Lemma \ref{GL1NV} (1), we have $I(s;[a,b];f,\Phi) = 0$ identically. Hence $I(s)=0$.
\end{proof}

	From now on, we assume $\idl{N}$ is a square in $\Cl_+(\F)$ and fix a fractional ideal $\idl{J}$ such that $\idl{J}^{-2}\idl{N}$ is principal with a chosen totally positive generator $N$. Let $U_+$ be the group of totally positive units of $\F$. Then all other totally positive generators of $\idl{J}^{-2}\idl{N}$ are of the form $u N$ for some $u \in U_+$. If $[a_1 N,u_1 N] = [a_2 N, u_2 N]$ for some $a_1,a_2 \in \idl{J}$ and $u_1,u_2 \in U_+$, then there exists $z \in \vo^{\times}$ such that $u_2 = u_1 z^2$ and $a_2 = a_1 z$. We choose a system of representatives $U_2$ for the quotient $U_+/(\vo^{\times})^2$. Then the non-zero terms in (\ref{EllTermRP}) are for $[a,b] = [nN, u N]$ with $u \in U_2$ and $n \in \idl{J}$. Moreover, $[n_1N, u_1 N] = [n_2 N, u_2 N]$ if and only if $u_1 = u_2$ and $n_1 = \pm n_2$. We can rewrite
\begin{equation}
	I(s) = \sum_{u \in U_2} \sum_{\substack{n \in \idl{J}/\{ \pm 1 \} \\ n^2u^2N^2-4uN \neq \square}} I(s; [nuN,uN]; f, \Phi).
\label{EllTermRRP}
\end{equation}

\begin{remark}
	For notational simplicity, we shall omit $u$ and $N$ in the local computation and write 
	$$ I(s;n;f,\Phi) = I(s;[nuN,uN];f,\Phi)$$
in the rest of this section. Obviously, we have a factorization of
	$$ I(s;n;f,\Phi) = \prod_v I_v(s;n;f_v,\Phi_v) $$
as product of local components.
\end{remark}

\begin{remark}
	For definiteness, we choose $\lambda$ re-parametrized by $[nuN,uN]$ to be
	$$ \lambda = \begin{pmatrix} nuN & -uN \\ 1 & 0 \end{pmatrix}. $$
	We will write $N$ instead of $uN$ in the local computation.
\end{remark}

\begin{remark}
	We can choose $\idl{J}$ to be an integral ideal since $\Cl(\F)$ is finite.
\end{remark}

	\subsection{Archimedean Places}
	
	Recall that a family of functions $h=h_{\delta} \in \Cont^{\infty}(\R_{\geq 0})$ are chosen to define the test function $f=f_v$ (we omit the subscript $v$ in this subsection for simplicity). The local component in question is
\begin{align*}
	\frac{I(s;n;f;\Phi)}{\Gamma_{\R}(2s+1)} &= \int_{\R} \int_{\R^{\times}} f \left( \begin{pmatrix} 1 & -x \\ 0 & 1 \end{pmatrix} \begin{pmatrix} nN & -y^{-1}N \\ y & 0 \end{pmatrix} \begin{pmatrix} 1 & x \\ 0 & 1 \end{pmatrix} \right) \norm[y]^{s+\frac{1}{2}} d^{\times}y dx \\
	&= \frac{(2k-1)(4N)^k}{4\pi} \left\{ I\left( h_{\frac{\delta}{N}}; n^2N^2-4N, nN; s+\frac{1}{2} \right) + I \left( h_{\frac{\delta}{N}}; n^2N^2-4N, -nN; s+\frac{1}{2} \right) \right\},
\end{align*}
	where $I(h; \Delta, t; s)$ for $\Delta < t^2$ is defined by
	$$ I(h; \Delta, t; s) := \int_0^{\infty} \int_{-\infty}^{\infty} \frac{y^{s+2k-2}}{(x^2+y^2+ity-\frac{\Delta}{4})^{2k}} h \left( \extnorm{\frac{x^2+y^2+ity-\frac{\Delta}{4}}{y}}^2 \right) dxdy. $$
	The computation is thus reduced to $I(h;\Delta,t;1)+I(h;\Delta,-t;1)$. We first consider the case $\Delta > 0$.
	
\begin{lemma}
	Assume $0 < \Delta < t^2$. Recall $P_k(s)$ given in Definition \ref{WtkPoly}. We have for $c=\Re(s) > 0$ and for any $h \in \Cont^{\infty}(\R_{\geq 0})$ of rapid decay at $\infty$
	$$ I(h; \Delta, t; 1) + I(h; \Delta, -t; 1) = \int_{(c)} \norm[t]^{1-2k-2s} P_k(s) \Mellin{h}(s) \frac{ds}{2\pi i}. $$
\label{GenAC}
\end{lemma}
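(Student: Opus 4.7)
The plan is to transform $I(h;\Delta,t;1)$ into a one-dimensional integral via two successive substitutions, exploit a parity cancellation to eliminate the $\Delta$-dependence, and then identify $P_k(s)$ through Mellin inversion, reusing the computation (\ref{PIntR}) already performed in the earlier principal-series trace calculation.

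First, I would substitute $x = uy$. The key observation is that
\[ x^{2}+y^{2}+ity-\tfrac{\Delta}{4} \;=\; y\bigl(v+it\bigr), \qquad v \;:=\; y(u^{2}+1) - \tfrac{\Delta}{4y}, \]
so that $|(x^{2}+y^{2}+ity-\Delta/4)/y|^{2} = v^{2}+t^{2}$, and the bookkeeping of $y$-powers collapses to give
\[ I(h;\Delta,t;1) \;=\; \int_{0}^{\infty}\!\int_{-\infty}^{\infty} \frac{h(v^{2}+t^{2})}{(v+it)^{2k}}\,du\,dy. \]
Next I would Fubini-swap and change $u\mapsto v$. Since $v$ is even in $u$, write $\int_{-\infty}^{\infty}du = 2\int_{0}^{\infty}du$, compute $du = dv/\bigl(2\sqrt{y(v-v_{0}(y))}\bigr)$ with $v_{0}(y) := y-\Delta/(4y)$, and observe that the region $\{v>v_{0}(y),\, y>0\}$ rewrites as $\{v\in\R,\, 0<y<y_{+}(v)\}$ where $y_{+}(v) := (v+\sqrt{v^{2}+\Delta})/2$. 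Completing the square yields $y(v-v_{0}(y)) = (v^{2}+\Delta)/4-(y-v/2)^{2}$, so the inner $y$-integral is a textbook $\arcsin$-integral evaluating to $\pi/2 + \arcsin\bigl(v/\sqrt{v^{2}+\Delta}\bigr)$.

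The crucial step is then to sum with $t$ replaced by $-t$. The combination $(v+it)^{-2k}+(v-it)^{-2k}$ and $h(v^{2}+t^{2})$ are both even in $v$, whereas $\arcsin(v/\sqrt{v^{2}+\Delta})$ is odd; consequently the $\arcsin$-contribution vanishes, and with it all dependence on $\Delta$. A further $v\mapsto -v$ symmetry collapses the two bracketed terms, and the substitution $v=|t|x$ (together with the parity $x\mapsto -x$ handling $\sgn(t)$) reduces the sum to a constant multiple of $|t|^{1-2k}\int_{-\infty}^{\infty} h(t^{2}(x^{2}+1))(x-i)^{-2k}\,dx$. Inserting the Mellin inversion $h(u) = \int_{(c)}u^{-s}\Mellin{h}(s)\,ds/(2\pi i)$ and recognising the remaining $x$-integral as $P_{k}(s)$ via (\ref{PIntR}) then produces the claimed formula, the overall constant being tracked through the $\pi/2$ from the $\arcsin$-evaluation combined with the doubling from the two bracketed terms.

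I expect the principal technical obstacle to be the rigorous justification of the Fubini swap: as $y\to 0^{+}$ one has $v_{0}(y)\to -\infty$, so the region of integration is unbounded in $v$, and the singular factor $1/\sqrt{y(v-v_{0}(y))}$ near the boundary must be controlled by the rapid decay of $h$ at infinity together with the uniform lower bound $|v+it|\geq|t|>0$. The substantive conceptual content, by contrast, lies in the observation that $\Delta$ enters only through the odd $\arcsin$-term, whose cancellation by the parity of the outer integrand is what renders the final answer independent of $\Delta$.
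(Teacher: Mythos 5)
Your proof is correct and is essentially the paper's own argument in a slightly unpacked form: your composite change of variables $x = uy$ followed by $u \mapsto v = y(u^2+1) - \Delta/(4y)$ gives $v = (x^2 - \Delta/4)/y + y$, which is exactly the paper's single substitution, and your explicit $\arcsin$-evaluation of the inner $y$-integral followed by the odd/even cancellation is equivalent to the paper's direct evaluation of the combined $y$-integral to $\pi$ in (\ref{InnInt}) after folding the $u$-integral to $u>0$. One small point worth flagging: tracking the constant carefully, both your argument and the paper's own proof produce an overall factor of $\pi$ (the $\pi/2$ from the half-period of the $\arcsin$-integral times the doubling), so the right-hand side in the lemma statement should read $\pi\int_{(c)}\norm[t]^{1-2k-2s}P_k(s)\Mellin{h}(s)\frac{ds}{2\pi i}$; the missing $\pi$ is a typo in the statement rather than an error in your proof.
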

\begin{proof}
	In the defining integral of $I(\cdot)$, we change the variable $x$ to
	$$ u := \frac{x^2-\frac{\Delta}{4}}{y} + y \quad \Leftrightarrow \quad x^2 = \frac{\Delta}{4} + uy - y^2. $$
	Note that $x^2 \geq 0$ implies $y \leq (u+\sqrt{u^2+\Delta})/2$. Thus we get
	$$ I(h; \Delta, t; s) = \int_{-\infty}^{\infty} \int_0^{\frac{u+\sqrt{u^2+\Delta}}{2}} \frac{h(u^2+t^2)}{(u+it)^{2k}} \frac{y^{s-1}}{\sqrt{\frac{\Delta}{4}+uy-y^2}} dy du. $$
	It follows that
\begin{align*}
	I(h; \Delta, t; s) &+ I(h; \Delta, -t; s) = \int_0^{\infty} \left\{ \frac{1}{(u+it)^{2k}} + \frac{1}{(u-it)^{2k}} \right\} h(u^2+t^2) \cdot \\
	&\qquad \left\{ \int_0^{\frac{u+\sqrt{u^2+\Delta}}{2}} \frac{y^{s-1}}{\sqrt{\frac{\Delta}{4}+uy-y^2}} dy + \int_0^{\frac{-u+\sqrt{u^2+\Delta}}{2}} \frac{y^{s-1}}{\sqrt{\frac{\Delta}{4}-uy-y^2}} dy\right\} du.
\end{align*}
	At $s=1$, we have
\begin{align} 
	&\quad \int_0^{\frac{u+\sqrt{u^2+\Delta}}{2}} \frac{dy}{\sqrt{\frac{\Delta}{4} + uy - y^2}} + \int_0^{\frac{-u+\sqrt{u^2+\Delta}}{2}} \frac{dy}{\sqrt{\frac{\Delta}{4} - uy - y^2}} \nonumber \\
	&= \int_{-\frac{\sqrt{u^2+\Delta}}{2}}^{\frac{\sqrt{u^2+\Delta}}{2}} \frac{dy}{\sqrt{\frac{u^2+\Delta}{4}-y^2}} = \int_{-1}^1 \frac{dy}{\sqrt{1-y^2}} = \pi. \label{InnInt}
\end{align}
	Applying the Mellin inversion formula for $h$, we obtain
\begin{align*}
	I(h; \Delta, t; 1) &+ I(h; \Delta, -t; 1) = \frac{\pi}{\norm[t]^{2k-1}} \int_0^{\infty} \left\{ \frac{1}{(u+i)^{2k}} + \frac{1}{(u-i)^{2k}} \right\} h(t^2(u^2+1)) du \\
	&= \frac{\pi}{\norm[t]^{2k-1}} \int_0^{\infty} \left\{ \frac{1}{(u+i)^{2k}} + \frac{1}{(u-i)^{2k}} \right\} \int_{(c)} \Mellin{h}(s) (t^2(u^2+1))^{-s} \frac{ds}{2\pi i} du.
\end{align*}
	For $c > 0$, the above integral is absolutely convergent. Hence we can change the order of integrations and conclude by the following equation (see (\ref{PIntR}))
	$$ \int_0^{\infty} \left\{ \frac{1}{(u+i)^{2k}} + \frac{1}{(u-i)^{2k}} \right\} (u^2+1)^{-s} du = P_k(s). $$
\end{proof}

\begin{lemma}
	Assume $\Delta \leq 0$. We have
	$$\lim_{\delta \to 0^+} \left( I(h_{\delta}; \Delta, t; 1) + I(h_{\delta}; \Delta, -t; 1) \right) = \frac{2\pi}{2k-1} \Re \left( \frac{1}{(\sqrt{\norm[\Delta]}+it)^{2k-1}} \right). $$
	Consequently, we have in the case $n^2N^2-4N \leq 0$
	$$ \lim_{\delta \to 0^+} I \left( \frac{1}{2};n;f,\Phi \right) = \frac{(4N)^k}{2 \pi} \Re \left( \frac{1}{(\sqrt{\norm[n^2N^2-4N]}+inN)^{2k-1}} \right). $$
\label{SpAC}
\end{lemma}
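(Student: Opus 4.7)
The plan is to carry out the same change of variable $u := (x^2-\Delta/4)/y+y$ used in Lemma \ref{GenAC}, but to re-analyze the image region, which changes shape completely when $\Delta \leq 0$. As in that lemma, the key identity $x^2+y^2+ity-\Delta/4 = y(u+it)$ collapses the argument of $h$ to $u^2+t^2$ and converts the integrand into
\[
h(u^2+t^2)\,(u+it)^{-2k}\,y^{s-1}\,(\Delta/4+uy-y^2)^{-1/2},
\]
two-to-one in the $x$ variable for each admissible $y$.

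The first real step is to identify the range of $(u,y)$. The condition $x^2\geq 0$ is $y^2-uy-\Delta/4\leq 0$, and its discriminant $u^2+\Delta$ is non-negative only for $|u|\geq \sqrt{|\Delta|}$ when $\Delta\leq 0$. When $u\leq -\sqrt{|\Delta|}$ both roots are non-positive and the slice contributes nothing to the $y>0$ integration, so only $u\geq \sqrt{|\Delta|}$ survives, with $y$ running over $[(u-\sqrt{u^2+\Delta})/2,\,(u+\sqrt{u^2+\Delta})/2]$. This is the decisive difference from Lemma \ref{GenAC}, where the $u$-integration extended over all of $\R$ and the $y$-interval started at $0$.

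Next I would evaluate the inner $y$-integral at $s=1$ via the shift $y=u/2+z$, which turns $\Delta/4+uy-y^2$ into $(u^2+\Delta)/4-z^2$; the resulting arcsine integral over $|z|\leq \sqrt{u^2+\Delta}/2$ equals $\pi$ exactly as in (\ref{InnInt}). Adding the contributions from $\pm t$ yields
\begin{equation*}
I(h_\delta;\Delta,t;1)+I(h_\delta;\Delta,-t;1) = 2\pi\,\Re \int_{\sqrt{|\Delta|}}^{\infty}\frac{h_\delta(u^2+t^2)}{(u+it)^{2k}}\,du.
\end{equation*}
I would then pass to the limit by dominated convergence, using $|h_\delta(u^2+t^2)|\leq 1$ and the fact that $|u+it|^{-2k}=(u^2+t^2)^{-k}$ is integrable on $[\sqrt{|\Delta|},\infty)$ for $k\geq 1$; the standing hypothesis $\Delta<t^2$ precludes $(\Delta,t)=(0,0)$ and hence any singularity at the lower endpoint. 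The antiderivative $(u+it)^{1-2k}/(1-2k)$, together with $(u+it)^{1-2k}\to 0$ as $u\to\infty$, then produces the stated identity. The consequence for $I(1/2;n;f,\Phi)$ follows by specializing $\Delta=n^2N^2-4N$, $t=nN$ in the opening decomposition of the subsection and multiplying by $\Gamma_{\R}(2)\cdot(2k-1)(4N)^k/(4\pi)=(2k-1)(4N)^k/(4\pi^2)$, after which the factors $2\pi/(2k-1)$ and $(2k-1)/(4\pi^2)$ telescope to $1/(2\pi)$.

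The only point requiring real care is the parametrization of the image region under the substitution when $\Delta\leq 0$; once the bounds $u\geq\sqrt{|\Delta|}$ and the correct $y$-interval are established, the inner integral collapses by the same arcsine computation as in Lemma \ref{GenAC}, and the outer integral is elementary.
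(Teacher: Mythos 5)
Your argument is correct and follows essentially the same route as the paper's proof: the same change of variables $u = (x^2 - \Delta/4)/y + y$, the observation that the image region becomes $u \geq \sqrt{|\Delta|}$ with $y$ ranging over $[(u-\sqrt{u^2+\Delta})/2,(u+\sqrt{u^2+\Delta})/2]$, the arcsine evaluation of the inner integral to $\pi$, dominated convergence, and the elementary antiderivative $(u+it)^{1-2k}/(1-2k)$. The only surface difference is that you package the $\pm t$ contributions as $2\Re$ at the outset while the paper adds the two terms at the end; the content is identical, and your bookkeeping with $\Gamma_{\R}(2) = 1/\pi$ in the final step is also correct.
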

\begin{proof}
	The same change of variables as in the proof of Lemma \ref{GenAC} leads to
	$$ I(h; \Delta, t; s) = \int_{\sqrt{\norm[\Delta]}}^{\infty} \int_{\frac{u-\sqrt{u^2+\Delta}}{2}}^{\frac{u+\sqrt{u^2+\Delta}}{2}} \frac{h(u^2+t^2)}{(u+it)^{2k}} \frac{y^{s-1}}{\sqrt{\frac{\Delta}{4}+uy-y^2}} dy du. $$
	Hence we get by (\ref{InnInt}) and the dominated convergence theorem
	$$ I(h_{\delta}; \Delta, t; 1) = \pi \int_{\sqrt{\norm[\Delta]}}^{\infty} \frac{h_{\delta}(u^2+t^2)}{(u+it)^{2k}} du \to \pi \int_{\sqrt{\norm[\Delta]}}^{\infty} \frac{1}{(u+it)^{2k}} du = \frac{\pi}{2k-1} \frac{1}{(\sqrt{\norm[\Delta]}+it)^{2k-1}}, \quad \delta \to 0^+. $$
	It follows that
	$$ \lim_{\delta \to 0^+} \left( I(h_{\delta}; \Delta, t; 1) + I(h_{\delta}; \Delta, -t; 1) \right) = \frac{\pi}{2k-1} \left( \frac{1}{(\sqrt{\norm[\Delta]}+it)^{2k-1}} + \frac{1}{(\sqrt{\norm[\Delta]}-it)^{2k-1}} \right). $$
\end{proof}	


	\subsection{Non-Archimedean Places}
	
		\subsubsection{Method of Computation}
		
	We turn to the computation of $I_{\vp}(s;n;f_{\vp},\Phi_{\vp})$ at a place $\vp < \infty$. We continue to omit the subscript $\vp$ for simplicity of notation. The method was previously developed in \cite[\S 4.1-4.3]{CWZ21}. We shall recall it with an extension suitable for the current situation.
	
	First of all, at the basis of the method there are two families of embeddings of quadratic algebra $\E$ over $\F$ into $\Mat_2(\F)$, which we call \emph{standard}, resp. \emph{well-positioned}.
	
\begin{definition}
	Let $\E$ be a quadratic (algebra) extension of $\F$ with an embedding $\iota: \E \hookrightarrow \Mat_2(\F)$ as $\F$-algebras. It is standard with respect to an element $\F \not\ni \theta \in \vo_{\E}$ in the ring of integers of $\E$, so that
	$$ \theta^2 - \fb \theta + \fa = 0, \quad \iota(\theta) = \begin{pmatrix} \fb & -\fa \\ 1 & 0 \end{pmatrix}, \quad \text{and} \quad \fa \notin \varpi^2 \vo_{\F}, $$
	where $\varpi$ is a uniformizer of the valuation ring $\vo_{\F}$. A standard embedding as above is called well-positioned if in addition we have
	$$ \vo_{\E} = \vo_{\F}[\theta]. $$
	In particular, well-positioned embeddings are optimal with respect to $\vo_{\E}$ and $\Mat_2(\vo_{\F})$. We also say that $\E$ is a standard or well-positioned if we identify $\E$ with its image $\iota(\E)$.
\label{WPEmb}
\end{definition}

\begin{proposition}
	Let $\E$ be a well-positioned quadratic extension of $\F$ with respect to $\theta$ contained in $\Mat_2(\F)$. If it is non-split, then we have
	$$ \E^{\times} \subset \F^{\times} \GL_2(\vo_{\F}), \quad \GL_2(\F) = \sideset{}{_{r=0}^{\infty}} \bigsqcup \E^{\times} a(\varpi^{-r}) \GL_2(\vo_{\F}), \quad a(t) := \begin{pmatrix} t & 0 \\ 0 & 1 \end{pmatrix}. $$
	If it is split, we identify $\theta$ with an element in $\vo_{\F}$, as well as for $\bar{\theta} := \fb - \theta$. Then we have
	$$ \GL_2(\F) = \sideset{}{_{r=0}^{\infty}} \bigsqcup \begin{pmatrix} \theta & \bar{\theta} \\ 1 & 1 \end{pmatrix} \gp{A} n(\varpi^{-r}) \GL_2(\vo_{\F}). $$
\label{IwaQE}
\end{proposition}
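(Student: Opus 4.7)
The plan is to handle the non-split and split cases separately, reducing each to a more elementary decomposition of $\GL_2(\F)$.

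For the non-split case, I would first establish $\E^\times \subset \F^\times \GL_2(\vo_\F)$: any $\alpha \in \vo_\E$ may be written $\alpha = x + y\theta$ with $x,y \in \vo_\F$ thanks to $\vo_\E = \vo_\F[\theta]$, so $\iota(\alpha) = xI + y\iota(\theta) \in \Mat_2(\vo_\F)$, and for $\alpha \in \vo_\E^\times$ the determinant $\det\iota(\alpha) = \Nr_{\E/\F}(\alpha)$ is a unit in $\vo_\F$. This gives $\iota(\vo_\E^\times) \subset \GL_2(\vo_\F)$; combined with the identity $\E^\times = \F^\times \vo_\E^\times$ (automatic when $\E/\F$ is unramified, and arrangeable in the ramified subcase by replacing $\theta$ by $\theta + c$ for a suitable $c \in \vo_\F$ so that $\theta \in \vo_\E^\times$), this yields the desired inclusion.

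For the Cartan-type decomposition $\GL_2(\F) = \bigsqcup_{r \geq 0} \E^\times a(\varpi^{-r}) \GL_2(\vo_\F)$, I would identify the quotient $\GL_2(\F)/(\F^\times \GL_2(\vo_\F))$ with the vertex set of the Bruhat--Tits tree of $\PGL_2(\F)$, on which $\E^\times$ acts by tree isometries fixing the vertex $v_0$ corresponding to the lattice $\vo_\E \subset \E$ (via the basis $\{1,\theta\}$). The orbits of $\E^\times$ on vertices are then the concentric spheres around $v_0$ indexed by the distance $r \in \Z_{\geq 0}$; a direct check that $a(\varpi^{-r}) \cdot v_0$ is a vertex at distance exactly $r$ gives the system of representatives, and disjointness is automatic from the distance invariant.

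For the split case $\E \cong \F \oplus \F$, a direct matrix computation shows that the columns of $P := \begin{pmatrix} \theta & \bar\theta \\ 1 & 1 \end{pmatrix}$ are eigenvectors of $\iota(\theta)$ with eigenvalues $\theta, \bar\theta$, so $P^{-1} \iota(\E^\times) P$ equals the standard diagonal torus $\gp{A}(\F)$ (since the map $(x,y) \mapsto (x+y\theta, x+y\bar\theta)$ is an $\F$-linear isomorphism). It therefore suffices to prove $\GL_2(\F) = \bigsqcup_{r \geq 0} \gp{A}(\F) n(\varpi^{-r}) \GL_2(\vo_\F)$, which follows from the Iwasawa decomposition $\GL_2(\F) = \gp{A}(\F) \gp{N}(\F) \GL_2(\vo_\F)$ by parametrizing the orbits of $\gp{A}(\vo_\F)$-conjugation and right $\gp{N}(\vo_\F)$-translation on $\gp{N}(\F) \cong \F$: the origin gives $r = 0$, while for each $r \geq 1$ the set $\{x \in \F : v(x) = -r\}$ forms a single orbit with canonical representative $\varpi^{-r}$. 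The main technical point throughout will be the non-split ramified subcase, where the well-positioned choice of $\theta$ must be refined and the Bruhat--Tits picture adapted to handle an edge rather than a vertex stabilizer of $\iota(\E^\times)$.
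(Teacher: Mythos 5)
Your reduction to the Bruhat--Tits tree is, in substance, a repackaging of the paper's argument: vertices of the tree are homothety classes of $\vo_{\F}$-lattices in $\E \cong \F^2$, and the paper works directly with lattices and their rings of multipliers (the orders $\vO_r$, via Cox's lemmas) rather than in tree language. So the two routes are not genuinely different. The more serious issue is that your first step contains a real error. The identity $\E^\times = \F^\times\vo_{\E}^\times$ is \emph{false} when $\E/\F$ is ramified: a uniformizer $\varpi_{\E}$ has $v_{\E}(\varpi_{\E}) = 1$, while $v_{\E}$ takes only even values on $\F^\times\vo_{\E}^\times$, so the index is $2$. Replacing $\theta$ by $\theta + c$ cannot repair this, because the obstruction is invariant: $\det\iota(\varpi_{\E}) = \Nr_{\E/\F}(\varpi_{\E})$ has odd $\vo_{\F}$-valuation, and multiplying by elements of $\F^\times$ changes the determinant only by squares. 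Consequently $\iota(\varpi_{\E}) \notin \F^\times\GL_2(\vo_{\F})$, so the stated inclusion $\E^\times \subset \F^\times\GL_2(\vo_{\F})$ holds only in the unramified subcase. (It is worth noting that the paper's own proof never addresses this inclusion; the proof, and the downstream Rankin--Selberg computation, only use the double-coset decomposition.)

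Beyond this, the claim that the $\E^\times$-orbits on vertices are precisely the spheres around $v_0$ is the heart of the matter and is not automatic from the tree picture; it is equivalent to the fact, which the paper extracts from Cox, that over a DVR every $\vo_{\F}$-lattice in $\E$ is a principal fractional ideal of exactly one order $\vO_r = \vo_{\F} + \varpi^r\vo_{\E}$, and that these orders are totally ordered by $r\geq 0$. Asserting the sphere picture without this input leaves the disjointness and exhaustion unproved. And, as you acknowledge at the end, in the ramified case $\E^\times$ stabilizes an edge rather than a vertex, so the sphere picture around a single vertex breaks; the lattice-order argument in the paper is uniform across the unramified and ramified cases and avoids this case split entirely. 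Your split-case argument via the Iwasawa decomposition is correct in spirit and matches the paper's one-line remark, though you should use the full diagonal torus $\gp{T}(\F) = \gp{Z}(\F)\gp{A}(\F)$ rather than $\gp{A}(\F)$ alone when matching $\iota(\E^\times)$.
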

\begin{proof}
	We only need to consider the non-split case, since the other one is just the usual Iwasawa decomposition. We regard elements of $\E$ as row vectors in $\F \oplus \F$ via
	$$ \E \to \F \oplus \F, \quad x \theta + y \mapsto (x,y). $$
	Then multiplication by $x \in \E$ is identified with multiplication by $x \in \Mat_2(\F)$ on $\F \oplus \F$. $\GL_2(\F)$ acts on $\F \oplus \F$, hence it also acts on the set $\mathcal{L}$ of $\vo_{\F}$-lattices in $\F \oplus \F$. Since $\vo_{\F}$ is DVR, every $L \in \mathcal{L}$ is principal for some order $\vO \subset \vo_{\E}$ (see \cite[Proposition 7.4]{Cox89} plus \cite[Proposition \Rmnum{1}.12.4]{Ne99}). By \cite[Lemma 7.2]{Cox89}, all the orders are parametrized by $r \in \Z_{\geq 0}$ as
	$$ \vO_r = \vo_{\F} \varpi^r \theta + \vo_{\F} = \vO_0 \cdot a(\varpi^r). $$
	Taking into account that $\GL_2(\F)$ acts transitively on $\mathcal{L}$ with stabilizer group $\GL_2(\vo_{\F})$, we get
	$$ \GL_2(\F) = \sideset{}{_{r=0}^{\infty}} \bigsqcup \GL_2(\vo_{\F}) a(\varpi^r) \E^{\times}. $$
	The desired decomposition follows by taking inverse.
\end{proof}

	Next, let $\iota_1$ (resp. $\iota_2$) be a standard (resp. well-positioned embedding) with respect to $\theta_1$ (resp. $\theta_2$) for the same quadratic algebra $\E$. In particular, $\theta_1 \in \vo_{\E} = \vo_{\F}[\theta_2]$. Hence there exist $u,v \in \vo_{\F}$ with $v \neq 0$ such that $\theta_1 = v \theta_2 + u$, or equivalently
	$$ \begin{pmatrix} \theta_1 \\ 1 \end{pmatrix} = \begin{pmatrix} v & u \\ 0 & 1 \end{pmatrix} \begin{pmatrix} \theta_2 \\ 1 \end{pmatrix} \quad \Rightarrow \quad \begin{pmatrix} x\theta_1 \\ x \end{pmatrix} = \begin{pmatrix} v & u \\ 0 & 1 \end{pmatrix} \begin{pmatrix} x\theta_2 \\ x \end{pmatrix} \quad \forall x \in \E. $$
	Moreover, we have by definition
	$$ \begin{pmatrix} x \theta_j \\ x \end{pmatrix} = \iota_j(x) \begin{pmatrix} \theta_j \\ 1 \end{pmatrix}, \quad \forall x \in \E \quad \Rightarrow \quad \iota_2(x) = \begin{pmatrix} v & u \\ 0 & 1 \end{pmatrix}^{-1} \iota_1(x) \begin{pmatrix} v & u \\ 0 & 1 \end{pmatrix}, \quad \forall x \in \E. $$
	In particular, we get
	$$ \iota_2(\theta_1) = \begin{pmatrix} v & u \\ 0 & 1 \end{pmatrix}^{-1} \iota_1(\theta_1) \begin{pmatrix} v & u \\ 0 & 1 \end{pmatrix} = \begin{pmatrix} * & * \\ v & * \end{pmatrix}. $$
	Let $f \in \Cont_c^{\infty}(\GL_2(\F))$. Fix a meromorphic section $e_{0,s} \in \pi(\norm_{\F}^s, \norm_{\F}^{-s})$ invariant by $\GL_2(\vo_{\F})$. Define
	$$ I_j := \int_{\F^{\times} \backslash \GL_2(\F)} f(x^{-1}\iota_j(\theta_1)x) e_{0,s}(x) dx. $$
	We deduce easily the following relation
\begin{equation}
	I_1 = \int_{\F^{\times} \backslash \GL_2(\F)} f \left( x^{-1}\begin{pmatrix} v & u \\ 0 & 1 \end{pmatrix}\iota_2(\theta_1)\begin{pmatrix} v & u \\ 0 & 1 \end{pmatrix}^{-1}x \right) e_{0,s}(x) dx =  \norm[v]_{\F}^{\frac{1}{2}+s} I_2.
\label{WellP}
\end{equation} 
	
\begin{lemma}
	Let $a$ be the conductor exponent of the order $\vo_{\F}[\theta_1]$. Then we have $\norm[v]_{\F} = \norm[\varpi^a]_{\F}$.
\label{WellPExp}
\end{lemma}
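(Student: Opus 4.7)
The plan is to reduce the lemma to a direct computation of the conductor of the order $\vo_{\F}[\theta_1]$ inside $\vo_{\E} = \vo_{\F}[\theta_2]$, using only the algebraic relation $\theta_1 = v\theta_2 + u$ with $u, v \in \vo_{\F}$.

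First I would observe that since $u \in \vo_{\F}$, translating by $u$ does not change the ring generated, so
\[ \vo_{\F}[\theta_1] = \vo_{\F}[v\theta_2 + u] = \vo_{\F}[v\theta_2] = \vo_{\F} + v\vo_{\F}\theta_2. \]
Writing $r := \ord_{\varpi}(v)$ and $v = \varpi^r w$ with $w \in \vo_{\F}^{\times}$, this becomes
\[ \vo_{\F}[\theta_1] = \vo_{\F} + \varpi^r \vo_{\F}\theta_2 = \vO_r, \]
in the notation used in the proof of Proposition \ref{IwaQE}. Thus the lemma is equivalent to showing that the conductor exponent of $\vO_r$ in $\vo_{\E} = \vO_0$ is exactly $r$.

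Next I would verify directly that the conductor of $\vO_r$ in $\vo_{\E}$ equals $\varpi^r \vo_{\E}$. For the inclusion $\varpi^r \vo_{\E} \subset \vO_r$: since $\vo_{\E} = \vo_{\F} + \vo_{\F}\theta_2$, we have $\varpi^r \vo_{\E} = \varpi^r \vo_{\F} + \varpi^r \vo_{\F}\theta_2 \subset \vO_r$, and $\varpi^r \vo_{\E}$ is clearly an ideal of $\vo_{\E}$. Conversely, suppose $x = a + b\theta_2 \in \vo_{\E}$ with $a, b \in \vo_{\F}$ satisfies $x\vo_{\E} \subset \vO_r$. The condition $x \in \vO_r$ forces $b \in \varpi^r\vo_{\F}$; the condition $x\theta_2 \in \vO_r$, using the minimal polynomial $\theta_2^2 = \fb_2\theta_2 - \fa_2$ with $\fa_2, \fb_2 \in \vo_{\F}$, gives
\[ x\theta_2 = -b\fa_2 + (a + b\fb_2)\theta_2 \in \vO_r, \]
hence $a + b\fb_2 \in \varpi^r\vo_{\F}$, which combined with $b \in \varpi^r\vo_{\F}$ yields $a \in \varpi^r\vo_{\F}$. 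Thus $x \in \varpi^r\vo_{\E}$.

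Finally, by definition of the conductor exponent, $a = r$, so
\[ \norm[v]_{\F} = \norm[\varpi^r]_{\F} = \norm[\varpi^a]_{\F}, \]
which is the claimed identity. The only step that requires any care is the verification of the conductor of $\vO_r$, but this is a direct one-line computation once one uses the minimal polynomial of $\theta_2$; no delicate argument or appeal to an outside reference beyond what is already invoked in Proposition \ref{IwaQE} is needed. The argument works uniformly whether $\E$ is a field or the split algebra $\F \oplus \F$, as long as $\theta_2$ generates $\vo_{\E}$ over $\vo_{\F}$.
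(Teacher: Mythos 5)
Your proof is correct and follows essentially the same route as the paper: both hinge on the key observation that $\vo_{\F}[\theta_1] = \vo_{\F}[v\theta_2]$ (since $u \in \vo_{\F}$) and then identify this order with $\vO_r$ for $r = \mathrm{ord}_{\varpi}(v)$. The only cosmetic difference is that the paper takes the characterization $\vO_a = \vo_{\F} + \varpi^a\vo_{\E}$ as the defining property of the conductor exponent and compares $\vo_{\F}$-module indices, whereas you compute the conductor ideal $\{x \in \vo_{\E} : x\vo_{\E} \subset \vO_r\} = \varpi^r\vo_{\E}$ directly; both are correct and of comparable length.
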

\begin{proof}
	Recall the conductor exponent is defined via
	$$ \vo_{\F}[\theta_1] = \vo_{\F} + \varpi^a \vo_{\E} = \vo_{\F} + \varpi^a \vo_{\F}[\theta_2] = \vo_{\F}[\varpi^a \theta_2]. $$
	But $\vo_{\F}[\theta_1] = \vo_{\F}[v \theta_2]$ by $\theta_1 = v \theta_2 + u$. It follows that
	$$ [\vo_{\E}:\vo_{\F}[\theta_1]] = [\vo_{\F} : \varpi^a \vo_{\F}] = [\vo_{\F} : v\vo_{\F}] \quad \Rightarrow \quad v \in \varpi^a \vo_{\F}^{\times}. $$
\end{proof}

\begin{definition}
	We call $I_2$ a \emph{well-positionization} of $I_1$ associated with $(\iota_2, \theta_2)$.
\end{definition}

	Finally, we specialize the spherical section $e_{0,s}$ to the case of Godement sections
	$$ e_{0,s}(x) = \norm[\det x]_{\F}^{\frac{1}{2}+s} \int_{\F^{\times}} \Phi((0,t)x) \norm[t]_{\F}^{1+2s} d^{\times}t, \quad \Phi = \mathbbm{1}_{\vo_{\F} \times \vo_{\F}}. $$
	Proposition \ref{IwaQE} implies the following decomposition
	$$ I_2 = \zeta_{\E} \left( s+\frac{1}{2} \right) \cdot \sum_{r=0}^{\infty} \mathrm{RS}-\vO_{\theta_1}(r,f) \cdot \mathrm{wt}(s,r,\E/\F), $$
	where the \emph{normalized Rankin-Selberg orbital integral} $\mathrm{RS}-\vO_{\theta_1}(r,f)$ is defined in terms of the notation of Proposition \ref{IwaQE} by (according to $\E/\F$ non-split or split)
	$$ \mathrm{RS}-\vO_{\theta_1}(r,f) = \frac{1}{\Vol(\GL_2(\vo_{\F}))} \left\{ \begin{matrix} \int_{\GL_2(\vo_{\F})} f(\kappa^{-1} a(\varpi^r) \iota_2(\theta_1) a(\varpi^{-r}) \kappa) d\kappa \\ \int_{\GL_2(\vo_{\F})} f \left( \kappa^{-1} n(-\varpi^{-r}) \begin{pmatrix} \theta_1 & \bar{\theta}_1 \\ 1 & 1 \end{pmatrix}^{-1} \iota_2(\theta_1) \begin{pmatrix} \theta_1 & \bar{\theta}_1 \\ 1 & 1 \end{pmatrix} n(\varpi^{-r}) \kappa \right) d\kappa \end{matrix} \right. ; $$
	and the \emph{Rankin-Selberg weights} $\mathrm{wt}(s,r,\E/\F)$ are computed in \cite[Proposition 4.5 \& \S 4.3]{CWZ21} (for a slightly different normalization), which we adapt (and correct) as follows.
	
\begin{proposition}
	We write $q$ for the cardinality of $\vo_{\F}/\varpi\vo_{\F}$ and $Z := q^s$, and define
	$$ L_{\vp}(1, \eta_{\E / \F}) = \left\{ \begin{matrix} (1+q^{-1})^{-1} & \text{if } \E / \F \text{ is unramified} \\ 1 & \text{if } \E / \F \text{ is ramified} \\ (1-q^{-1})^{-1} & \text{if } \E / \F \text{ is split} \end{matrix} \right. , $$
	where $\eta_{\E / \F}$ is the quadratic character associated with the quadratic extension $\E / \F$. Then we have
	$$ \frac{\mathrm{wt}(s; 0, \E / \F)}{\Vol(\GL_2(\vo_{\F}))} = 1. $$
	While for $r \geq 1$, we have:
\begin{itemize}
	\item[(1)] If $\E / \F$ is unramified, then
	$$ L_{\vp}(1, \eta_{\E / \F}) \frac{\mathrm{wt}(s; r, \E / \F)}{\Vol(\GL_2(\vo_{\F}))} = \frac{ (Z-q^{-1}Z^{-1}) (q^{\frac{1}{2}}Z)^r - (Z^{-1}-q^{-1}Z) (q^{\frac{1}{2}}Z^{-1})^r }{Z-Z^{-1}}. $$
	\item[(2)] If $\E / \F$ is ramified, then
	$$ L_{\vp}(1, \eta_{\E / \F}) \frac{\mathrm{wt}(s; r, \E / \F)}{\Vol(\GL_2(\vo_{\F}))} = \frac{ (Z-q^{-\frac{1}{2}}) (q^{\frac{1}{2}}Z)^r - (Z^{-1}-q^{-\frac{1}{2}}) (q^{\frac{1}{2}}Z^{-1})^r }{Z-Z^{-1}}. $$
	\item[(3)] If $\E / \F$ is split, then
	$$ L_{\vp}(1, \eta_{\E / \F}) \frac{\mathrm{wt}(s; r, \E / \F)}{\Vol(\GL_2(\vo_{\F}))} = \frac{ (Z+q^{-1}Z^{-1}-2q^{-\frac{1}{2}}) (q^{\frac{1}{2}}Z)^r - (Z^{-1}+q^{-1}Z-2q^{-\frac{1}{2}}) (q^{\frac{1}{2}}Z^{-1})^r }{Z-Z^{-1}}. $$
\end{itemize}
\label{ExpWts}
\end{proposition}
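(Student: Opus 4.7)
The plan is to unfold the local integral
$$I_2 = \int_{\F^\times \backslash \GL_2(\F)} f(x^{-1}\iota_2(\theta_1)x)\,e_{0,s}(x)\,dx$$
along the coset decomposition of Proposition \ref{IwaQE}, and then identify $\mathrm{wt}(s,r,\E/\F)$ as the contribution of the Godement section on the $r$-th coset after the orbital integral has been factored out. Since $\iota_2(\theta_1)$ is central in $\iota_2(\E^\times)$, writing $x = t\,a(\varpi^{-r})\kappa$ in the non-split case (resp.\ $x = \begin{pmatrix}\theta&\bar\theta\\1&1\end{pmatrix}a\,n(\varpi^{-r})\kappa$ in the split case) the argument of $f$ collapses to $\kappa^{-1}\bigl(a(\varpi^r)\iota_2(\theta_1)a(\varpi^{-r})\bigr)\kappa$ (resp.\ its split analogue), so that the orbital integral $\mathrm{RS}\text{-}\vO_{\theta_1}(r,f)$ separates cleanly. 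What remains is an integral of $e_{0,s}$ against a $\F^\times\backslash \E^\times$-summand, which produces $\zeta_\E(s+\tfrac12)$ times the desired weight.

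The case $r=0$ is immediate since the $r=0$ coset is just $\E^\times \GL_2(\vo_\F)$ and the normalization cancels to give $\mathrm{wt}(s;0,\E/\F)/\Vol(\GL_2(\vo_\F))=1$ in all three cases. For $r\geq 1$, I would compute $e_{0,s}$ explicitly on the coset representative by carrying out the Iwasawa decomposition of $a(\varpi^{-r})\kappa$ (respectively of $\begin{pmatrix}\theta&\bar\theta\\1&1\end{pmatrix}an(\varpi^{-r})\kappa$): $e_{0,s}$ gives a factor $q^{r(1/2+s)}\zeta_\F(1+2s)$ times a correction depending on $\kappa$ through its class modulo the Iwahori. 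The stabilizer of the coset intersected with $\E^\times$ is the unit group $\vo_{\E,r}^\times$ of the conductor-$r$ order $\vo_\F+\varpi^r\vo_\E$, and the index $[\vo_\E^\times:\vo_{\E,r}^\times]$ — different in each of the three cases — produces the relevant $q^{-1}$, $q^{-1/2}$, or $1$ factors. Summing the contributions from the three cosets modulo the Iwahori gives the closed-form expressions, and assembling these yields the linear combinations of $(q^{1/2}Z)^r$ and $(q^{1/2}Z^{-1})^r$, divided by $Z-Z^{-1}$ which comes from the geometric-series sum over embedded torus cosets.

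The main technical obstacle is the split case. There the coset representative involves the conjugating matrix $\begin{pmatrix}\theta&\bar\theta\\1&1\end{pmatrix}$, and both $\theta$ and $\bar\theta=\fb-\theta$ lie in $\vo_\F$, which forces one to track denominators carefully when putting $n(\varpi^{-r})$ into Iwasawa form. This is also where the authors indicate a correction to \cite{CWZ21} is needed: the numerator combination $Z+q^{-1}Z^{-1}-2q^{-1/2}$ (as opposed to the naive $Z-q^{-1}Z^{-1}$ from the unramified pattern) reflects the extra cross-term coming from the interaction of the two distinct roots of the characteristic polynomial with the unipotent part. The plan here is to do this Iwasawa decomposition directly, verify that for $r\geq 1$ exactly the $\kappa$-classes of conductor exactly $r$ contribute the \emph{leading} $(q^{1/2}Z)^r$ term while the complementary $(q^{1/2}Z^{-1})^r$ term arises from the dual Bruhat cell, and finally confirm the common factor $L_\vp(1,\eta_{\E/\F})$ by comparing the $r\to\infty$ asymptotics with the known Satake transform of the spherical function on the split torus.
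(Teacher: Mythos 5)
The paper does not actually prove Proposition~\ref{ExpWts}: the weights are imported from \cite[Proposition~4.5 \& \S4.3]{CWZ21} and only the normalization is adapted, so your attempt at an independent derivation is filling a gap rather than reproducing the authors' argument. The overall frame you choose --- unfold $I_2$ along the coset decomposition of Proposition~\ref{IwaQE}, use the commutativity of $\iota_2(\E)$ to pull out the orbital integral $\mathrm{RS}\text{-}\vO_{\theta_1}(r,f)$, and read off $\zeta_\E(s+\frac12)\,\mathrm{wt}(s,r,\E/\F)$ from what remains --- is the right one.

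The difficulty is in the execution of the key step. You propose to compute $e_{0,s}$ on the $r$-th coset by ``carrying out the Iwasawa decomposition of $a(\varpi^{-r})\kappa$'' and then to find a ``correction depending on $\kappa$ through its class modulo the Iwahori.'' But $e_{0,s}$ is right $\GL_2(\vo_\F)$-invariant (the Godement section with $\Phi=\mathbbm{1}_{\vo_\F\times\vo_\F}$), so $e_{0,s}(a(\varpi^{-r})\kappa)=e_{0,s}(a(\varpi^{-r}))$ is literally constant in $\kappa$; the Iwahori subgroup appears only inside the orbital integral $\mathrm{RS}\text{-}\vO_{\theta_1}(r,f)$, which has already been separated out, and contributes nothing to $\mathrm{wt}$. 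What actually varies is the \emph{left} torus part: after the change of variable one is left with a Tate integral over $t\in\E^\times$ (resp.\ the diagonal torus in the split case), and one must Iwasawa-decompose $\iota_2(t)\,a(\varpi^{-r})$ as $t$ runs over $\E^\times/\F^\times\vO_r^\times$. The quantity that governs the height in this decomposition is the valuation of the lower-left matrix entry of $\iota_2(t)$, which records precisely the conductor $j$ of $t$ relative to the order filtration $\vO_0\supset\vO_1\supset\cdots\supset\vO_r$; summing over the strata $j=0,\dots,r$ is what produces the $\bigl((q^{1/2}Z)^r A-(q^{1/2}Z^{-1})^r B\bigr)/(Z-Z^{-1})$ shape, and the strata counts $(q+1)q^{j-1}$, $q^{j}$, $(q-1)q^{j-1}$ (unramified, ramified, split) are what supply the constants $A,B$ and hence the $L_\vp(1,\eta_{\E/\F})$ normalization. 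Your attribution of the $(q^{1/2}Z^{-1})^r$ term to ``the dual Bruhat cell'' does not correspond to any step in this computation, and the proposed concluding check via the Satake transform of the split-torus spherical function is a consistency heuristic rather than a derivation. As written, the sketch would not reproduce the correction to \cite{CWZ21} that the paper flags, nor would it distinguish the three numerator patterns, because the crucial conductor-of-$t$ stratification is absent from the plan.
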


		\subsubsection{Computation at $\vp \mid \idl{N}$}
		
	By definition, we need to compute
	$$ I(s;n;f,\Phi) = \int_{\E^{\times} \backslash \GL_2(\F)} f(x^{-1}\iota(\lambda)x) \int_{\E^{\times}} \Phi((0,1)\iota(t)x) \norm[\det \iota(t)x]_{\F}^{\frac{1}{2}+s} d^{\times}t dx, $$
	where $\E=\F[\lambda]$ is a ramified extension over $\F$ and the embedding $\iota$ is given by
	$$ \iota(\lambda) = \begin{pmatrix} nN & -N \\ 1 & 0 \end{pmatrix}. $$
	Recall that $N\vo_{\F} = \idl{J}^{-2}\idl{N}, n \in \idl{J}$ and $\idl{J}=\delta \vo_{\F}$. Hence $\iota$ is not necessarily standard. Note that
	$$ \iota_1(x) := \begin{pmatrix} \delta & \\ & 1 \end{pmatrix} \iota(x) \begin{pmatrix} \delta & \\ & 1 \end{pmatrix}^{-1} \quad \Rightarrow \quad \iota_1(\delta \lambda) = \begin{pmatrix} n\delta N & -\delta^2 N \\ 1 & 0 \end{pmatrix}, $$
	which is the root of an Eisenstein polynomial in $\vo_{\F}[X]$. Hence $\vo_{\E} = \vo_{\F}[\delta \lambda]$ and $\iota_1$ is well-positioned with respect to $\delta \lambda$. Since $f$ is invariant by $\F^{\times}$ by definition (\ref{TFNArch}), we get
\begin{equation}
	I(s;n;f,\Phi) = \norm[\delta]_{\F}^{\frac{1}{2}+s} I_1,
\label{Stand}
\end{equation} 
	where $I_1$ is defined and has a decomposition as
\begin{align*}
	I_1 &:= \int_{\iota_1(\E^{\times}) \backslash \GL_2(\F)} f(x^{-1} \iota_1(\delta \lambda) x) \int_{\E^{\times}} \Phi((0,1) \iota_1(t)x) \norm[\det \iota_1(t)x]_{\F}^{\frac{1}{2}+s} d^{\times}t dx \\
	&= \zeta_{\E} \left( s+\frac{1}{2} \right) \cdot \sum_{r=0}^{\infty} \mathrm{RS}-\vO_{\delta \lambda}(r,f) \cdot \mathrm{wt}(s,r,\E/\F).
\end{align*} 

\begin{lemma}
	Write $q$ for the cardinality of $\vo_{\F}/\varpi\vo_{\F}$. The above Rankin-Selberg orbital integral $\mathrm{RS}-\vO_{\delta \lambda}(r,f)$ is non-vanishing only if $r=0$, and we have
	$$ \mathrm{RS}-\vO_{\delta \lambda}(0,f) = \left\{ \begin{matrix} q-1 & \text{if } n \in \varpi \idl{J} \\ -1 & \text{otherwise} \end{matrix} \right. . $$
\end{lemma}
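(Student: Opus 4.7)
The plan relies on Bruhat--Tits tree geometry. Setting $\eta := \begin{pmatrix} 0 & 1 \\ \varpi & 0 \end{pmatrix}$, the support $\eta\,\gp{Z}_\vp \RI_\vp$ of $f_\vp$ in $\PGL_2(\F_\vp)$ is exactly the set of elements that stabilize (and flip) the standard Iwahori edge $(L_0, L_0'')$, where $L_0 := \vo_\vp^2$ and $L_0'' := \vo_\vp e_1 + \varpi\vo_\vp e_2$. Writing $A := \iota_1(\delta\lambda) = \begin{pmatrix} n\delta N & -\delta^2 N \\ 1 & 0 \end{pmatrix}$, the fact that $v_\vp(\det A) = v_\vp(\delta^2 N) = 1$ implies that $\bar A \in \PGL_2(\F_\vp)$ is an edge-flipper, and a direct computation gives $AL_0 = L_0' := \varpi \vo_\vp e_1 + \vo_\vp e_2$. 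So $\bar A$ stabilizes and flips the neighboring edge $(L_0, L_0')$, which is distinct from the Iwahori edge $(L_0, L_0'')$.

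For $r \geq 1$, the conjugate $A_r := a(\varpi^r) A a(\varpi^{-r})$ fixes the edge $a(\varpi^r) \cdot (L_0, L_0')$, whose endpoints lie at graph distances $r$ and $r+1$ from $L_0$. Any $\kappa \in \GL_2(\vo_\vp) = \Stab(L_0)$ preserves distances to $L_0$, so the edge fixed by $\kappa^{-1} A_r \kappa$ still lies at distance $r$ from $L_0$ and cannot coincide with the Iwahori edge (which is incident to $L_0$). Hence $\kappa^{-1} A_r \kappa \notin \mathrm{supp}(f_\vp)$ for every $\kappa \in \GL_2(\vo_\vp)$, and the orbital integral vanishes identically.

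For $r = 0$, decompose $\GL_2(\vo_\vp) = \bigsqcup_{[\kappa] \in \mathbb{P}^1(k)} \kappa \RI_\vp$. The condition $\kappa^{-1} A \kappa \in \eta\,\gp{Z}_\vp \RI_\vp$ is equivalent to $\kappa L_0'' = L_0'$, and since $L_0', L_0''$ correspond to the distinct lines $\langle \bar e_2\rangle, \langle \bar e_1\rangle$ in $\mathbb{P}^1(k)$, this selects the unique coset $w\RI_\vp$ with $w := \begin{pmatrix} 0 & 1 \\ 1 & 0 \end{pmatrix}$. A direct calculation gives
\[wAw = \eta M_0, \qquad M_0 := \begin{pmatrix} -u & un' \\ 0 & 1 \end{pmatrix} \in \RI_\vp,\]
where $u := \delta^2 N/\varpi \in \vo_\vp^\times$ and $n' := n/\delta \in \vo_\vp$ (after choosing $\delta$ to be a local uniformizer of $\idl{J}$ at $\vp$). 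It remains to check that $f_\vp(\kappa^{-1} A \kappa)$ is constant on $w\RI_\vp$: for $\kappa = wj$ with $j = \begin{pmatrix} a & b \\ \varpi c & d \end{pmatrix} \in \RI_\vp$, writing $\kappa^{-1} A \kappa = \eta (\eta^{-1} j^{-1} \eta) M_0 j$ and expressing the second factor as $\begin{pmatrix} X_1 & R_1 \\ \varpi R_2 & X_2 \end{pmatrix} \in \RI_\vp$, a cancellation in which the $uab$ and $cd$ terms cancel yields $R_1 + R_2 = u n' (ad - \varpi bc)/\det(j) = u n'$, independent of $j$. Since $u$ is a unit, $R_1 + R_2 \in \vp$ iff $n \in \varpi \idl{J}$, and integrating gives
\[\mathrm{RS}\text{-}\vO_{\delta\lambda}(0, f_\vp) = \tfrac{\Vol(w\RI_\vp)}{\Vol(\GL_2(\vo_\vp))} \cdot f_\vp(wAw) = \tfrac{1}{q+1} \cdot (q+1) \cdot \begin{cases} q-1, & n \in \varpi \idl{J}, \\ -1, & \text{otherwise},\end{cases}\]
matching the claim. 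The main technical obstacle is verifying the $j$-independence of $R_1 + R_2$, a short but careful matrix computation; the tree-geometric reductions for both $r \geq 1$ and $r = 0$ are conceptually immediate.
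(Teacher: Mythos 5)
Your proof is correct, and it takes a genuinely different route from the paper's. The paper's argument is a direct matrix computation: it writes down the explicit coset decomposition $\gp{K}/\gp{K}_0[\vp] = \gp{K}_0[\vp] \sqcup \bigsqcup_{\alpha} w\begin{pmatrix}1&0\\\alpha&1\end{pmatrix}\gp{K}_0[\vp]$, conjugates $a(\varpi^r)\iota_1(\delta\lambda)a(\varpi^{-r})$ by each representative, and inspects the matrix entries to see which land in the explicit support $\begin{pmatrix}\vp&\vo^\times\\\varpi\vo^\times&\vp\end{pmatrix}$; the vanishing for $r\geq 1$ and for all $\alpha\neq 0$ is read off from the presence of $\varpi^{-r}$ or $\alpha$ in the wrong place. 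You instead interpret everything via the Bruhat--Tits tree: the support is the set of flippers of the Iwahori edge, $A = \iota_1(\delta\lambda)$ is elliptic (ramified torus) and flips the distinct adjacent edge $(L_0,L_0')$, and the vanishing for $r\geq 1$ follows because $\kappa$ preserves distances to $L_0$ so the flipped edge cannot be incident to $L_0$; for $r=0$ the tree geometry pins down a unique coset of $\RI_\vp$ a priori. The geometric argument is conceptually cleaner and explains \emph{why} only one coset survives, whereas the paper's is more elementary and self-contained. Two small remarks: your verification that $R_1+R_2 = un'$ is independent of $j\in\RI_\vp$ re-derives the content of the paper's Lemma B.6 — in fact $f_\vp = \wt f^b_\vp$ is constructed as an $\RI_\vp$-conjugation average and so is automatically constant on the coset $w\RI_\vp$, which you could have cited instead; and the phrase ``local uniformizer of $\idl J$'' should read ``local generator'' since $\ord_\vp(\delta)=\ord_\vp(\idl J)$ need not equal $1$, though this does not affect the argument.
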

\begin{proof}
	The coset decomposition
	$$ \gp{K} / \gp{K}_0[\vp] = \gp{K}_0[\vp] \sqcup \sideset{}{_{\alpha \in \vo_{\F} / \varpi \vo_{\F}}} \bigsqcup w \begin{pmatrix} 1 & 0 \\ \alpha & 1 \end{pmatrix} \gp{K}_0[\vp] $$
	implies the equation
\begin{align}
	\mathrm{RS}-\vO_{\delta \lambda}(r,f) &= \frac{1}{q+1} f(a(\varpi^r) \iota_1(\delta \lambda) a(\varpi^{-r})) + \label{RSORNeq} \\
	&\quad \frac{1}{q+1} \sideset{}{_{\alpha \in \vo_{\F} / \varpi \vo_{\F}}} \sum f \left( \begin{pmatrix} 1 & 0 \\ -\alpha & 1 \end{pmatrix} w^{-1} a(\varpi^r) \iota_1(\delta \lambda) a(\varpi^{-r}) w \begin{pmatrix} 1 & 0 \\ \alpha & 1 \end{pmatrix} \right). \nonumber
\end{align}
	For any $r \geq 0$, we have
	$$ a(\varpi^r) \iota_1(\delta \lambda) a(\varpi^{-r}) = \begin{pmatrix} n \delta N & -\varpi^r \delta^2 N \\ \varpi^{-r} & 0 \end{pmatrix} \left( \notin \begin{pmatrix} \varpi \vo_{\F} & \vo_{\F}^{\times} \\ \varpi\vo_{\F}^{\times} & \varpi\vo_{\F} \end{pmatrix} \right), $$
which never lies in the support of $f$. Hence the first term in (\ref{RSORNeq}) vanishes identically. Similarly,
\begin{multline*} 
	\begin{pmatrix} 1 & 0 \\ -\alpha & 1 \end{pmatrix} w^{-1} a(\varpi^r) \iota_1(\delta \lambda) a(\varpi^{-r}) w \begin{pmatrix} 1 & 0 \\ \alpha & 1 \end{pmatrix} = \\
	\begin{pmatrix} -\alpha \varpi^{-r} & -\varpi^{-r} \\ \varpi^r \delta^2 N + n \delta N \alpha + \varpi^{-r} \alpha^2 & n \delta N + \alpha \varpi^{-r} \end{pmatrix} \in \begin{pmatrix} \varpi\vo_{\F} & \vo_{\F}^{\times} \\ \varpi\vo_{\F}^{\times} & \varpi\vo_{\F} \end{pmatrix} 
\end{multline*}
	if and only if $r=0$ and $\alpha=0$, in which case we get by (\ref{TFNArch})
	$$ f \begin{pmatrix} 0 & -1 \\ \delta^2 N & n \delta N \end{pmatrix} = (q+1) \cdot \left\{ \begin{matrix} q-1 & \text{if } n \in \varpi \idl{J} \\ -1 & \text{otherwise} \end{matrix} \right. $$
	and conclude the desired formula.
\end{proof}

\begin{corollary}
	At a place $\vp \mid \idl{N}$ with cardinality of residue field equal to $q$, we have
	$$ I_{\vp}(s;n;f_{\vp}, \Phi_{\vp}) = \Vol(\GL_2(\vo_{\vp})) q^{-\left( \frac{1}{2}+s \right)\mathrm{ord}_{\vp}(\idl{J})} \zeta_{\E_{\vp}} \left( \frac{1}{2}+s \right) \cdot \left\{ \begin{matrix} q-1 & \text{if } \mathrm{ord}_{\vp}(n) \geq \mathrm{ord}_{\vp}(\idl{J})+1 \\ -1 & \text{otherwise} \end{matrix} \right. . $$
\label{NANPart}
\end{corollary}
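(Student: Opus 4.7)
The plan is essentially to assemble the three preceding results into a single formula. I would start from the reduction (\ref{Stand}), which identifies $I_{\vp}(s;n;f_{\vp},\Phi_{\vp})$ with $\norm[\delta]_{\F}^{1/2+s} \cdot I_1$, where $\iota_1$ is the well-positioned embedding and $I_1$ is the associated orbital-type integral. Since $\Phi_{\vp} = \mathbbm{1}_{\vo_{\vp} \times \vo_{\vp}}$ is the standard spherical Schwartz function, the Godement section it produces is exactly the $\GL_2(\vo_{\vp})$-invariant section $e_{0,s}$ used in the method of \S4.3.1, so the decomposition
\[
I_1 = \zeta_{\E_{\vp}}\!\left(s+\tfrac{1}{2}\right) \sum_{r=0}^{\infty} \mathrm{RS}\text{-}\vO_{\delta \lambda}(r,f) \cdot \mathrm{wt}(s,r,\E/\F)
\]
is available.

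Next, I would invoke the immediately preceding lemma: it shows that $\mathrm{RS}\text{-}\vO_{\delta\lambda}(r,f) = 0$ for every $r \geq 1$, so only the $r=0$ term in the sum survives, and at $r=0$ the orbital integral equals $q-1$ or $-1$ according to whether $n \in \varpi\idl{J}$ or not. By Proposition \ref{ExpWts}, the corresponding weight is $\mathrm{wt}(s;0,\E/\F) = \Vol(\GL_2(\vo_{\vp}))$ in all three cases (unramified, ramified, split). Combining these gives
\[
I_1 = \Vol(\GL_2(\vo_{\vp})) \cdot \zeta_{\E_{\vp}}\!\left(s+\tfrac{1}{2}\right) \cdot \mathrm{RS}\text{-}\vO_{\delta\lambda}(0,f).
\]

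Finally, I would translate the parameters back. Since $\idl{J} = \delta\vo_{\vp}$ locally, we have $\norm[\delta]_{\F} = q^{-\mathrm{ord}_{\vp}(\idl{J})}$, so the prefactor $\norm[\delta]_{\F}^{1/2+s}$ becomes $q^{-(1/2+s)\mathrm{ord}_{\vp}(\idl{J})}$. The condition ``$n \in \varpi\idl{J}$'' is literally the statement $\mathrm{ord}_{\vp}(n) \geq \mathrm{ord}_{\vp}(\idl{J})+1$, matching the dichotomy in the statement. There is no real obstacle: all the hard work — the well-positionization of the embedding, the vanishing of $r \geq 1$ contributions from the support condition on $f_{\vp}$, and the computation of the spherical weights — has been carried out in the preceding lemmas and propositions, and the corollary is a direct bookkeeping step.
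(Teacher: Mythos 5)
Your proposal is correct and follows exactly the route the paper intends: (\ref{Stand}) reduces to the well-positioned orbital integral, the preceding lemma kills all $r\ge 1$ terms and evaluates the $r=0$ term, Proposition \ref{ExpWts} gives $\mathrm{wt}(s;0,\E/\F)=\Vol(\GL_2(\vo_\vp))$, and the dictionary $\norm[\delta]_{\F}=q^{-\mathrm{ord}_\vp(\idl{J})}$ together with $n\in\varpi\idl{J}\iff\mathrm{ord}_\vp(n)\ge\mathrm{ord}_\vp(\idl{J})+1$ completes the bookkeeping. The paper omits this assembly step since it is immediate; your write-up supplies it faithfully.
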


		\subsubsection{Computation at $\vp \nmid \idl{N}$}
		
	As in the previous case, we associate to the embedding $\iota$ another embedding $\iota_1$ with the same formula. This time, $\iota_1$ is standard but not necessarily well-positioned. The relation (\ref{Stand}) still holds. Let $I_2$ be a well-positionization of $I_1$ associated with $(\iota_2, \theta_2)$. Then we have by (\ref{WellP}) and Lemma \ref{WellPExp}
	$$ I_1 = \norm[\varpi]_{\F}^{\left( \frac{1}{2}+s \right) a} I_2, $$
	where $a$ is the conductor exponent of the order $\vo_{\F}[\delta \lambda]$. By computing the discriminant of $\vo_{\F}[\delta \lambda]$ (see \cite[Lemma 4.1]{CWZ21} for detail), we get
	$$ \delta^2 \left( (nN)^2-4N \right) \vo_{\F} = \varpi^{2a} D_{\E/\F} $$
	where $D_{\E/\F}$ is the (local) discriminant of $\vo_{\E}$. The computation of $I_2$ is given in \cite[\S 4.4]{CWZ21}, which should be considered as a standard result for the unramified places. We simply recall the formula as
	$$ I_2 = \Vol(\GL_2(\vo_{\F})) \zeta_{\E} \left( \frac{1}{2}+s \right) q^{\frac{a}{2}} \frac{(q^{(a+1)s}-q^{-(a+1)s}) - \varepsilon_{\E} q^{-\frac{1}{2}} (q^{as}-q^{-as})}{q^s-q^{-s}}, $$
	where $\varepsilon_{\E}$ is defined by
	$$ \varepsilon_{\E} = \left\{ \begin{matrix} -1 & \text{if } \E/\F \text{ is unramified} \\ 0 & \text{if } \E/\F \text{ is ramified} \\ 1 & \text{if } \E/\F \text{ is split} \end{matrix} \right.. $$
	We summarize the above discussion in the following lemma.

\begin{lemma}
	Let $\E$ be the quadratic extension determined by $\lambda = \begin{pmatrix} nN & -N \\ 1 & 0 \end{pmatrix}$ with discriminant $D_{\E/\F}$. Let $D_{\vp}$ be the local component of $D_{\E/\F}$ at a place $\vp \nmid \idl{N}$, of which the cardinality of residue field is equal to $q$. Let $a \in \Z_{\geq 0}$ be defined by
	$$ \delta_{\vp}^2 \left( (nN)^2 - 4N \right) \vo_{\vp} = \varpi_{\vp}^{2a} D_{\vp}. $$
	Then we have
\begin{align*}
	I_{\vp}(s;n;f_{\vp},\Phi_{\vp}) &= \Vol(\GL_2(\vo_{\vp})) q^{-\left( \frac{1}{2}+s \right)\mathrm{ord}_{\vp}(\idl{J})} \zeta_{\E} \left( \frac{1}{2}+s \right) \cdot \\
	&\quad q^{-as} \frac{(q^{(a+1)s}-q^{-(a+1)s}) - \varepsilon_{\E,\vp} q^{-\frac{1}{2}} (q^{as}-q^{-as})}{q^s-q^{-s}}.
\end{align*} 
\label{NAUnr}
\end{lemma}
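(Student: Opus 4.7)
The plan is to assemble the two reductions prepared in the preceding paragraphs and then cite the closed-form computation of unramified Rankin--Selberg weights. First, I would standardize the embedding: conjugation by $\mathrm{diag}(\delta_\vp,1)$ converts $\iota$ into an embedding $\iota_1$ with $\iota_1(\delta_\vp \lambda) = \begin{pmatrix} n\delta_\vp N & -\delta_\vp^2 N \\ 1 & 0 \end{pmatrix}$, which is standard in the sense of Definition \ref{WPEmb}. Since $f_\vp$ is invariant under the center $\F_\vp^\times$ and the conjugation acts nontrivially only on the Godement section, this step contributes exactly the factor $\norm[\delta_\vp]_\F^{1/2+s}$, as recorded in (\ref{Stand}). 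Next I would well-positionize: pick any well-positioned embedding $\iota_2$ of $\E$ and apply (\ref{WellP}) together with Lemma \ref{WellPExp} to obtain $I_1 = \norm[\varpi_\vp]_\F^{(1/2+s)a} I_2$, where $a$ is the conductor exponent of the order $\vo_\F[\delta_\vp \lambda]$.

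To identify $a$ concretely, I would compute $\mathrm{disc}(\vo_\F[\delta_\vp \lambda])$ directly from the minimal polynomial $X^2 - n\delta_\vp N\, X + \delta_\vp^2 N$, obtaining $\delta_\vp^2((nN)^2 - 4N)$. Combined with $\mathrm{disc}(\vo_\F[\delta_\vp \lambda]) = [\vo_\E : \vo_\F[\delta_\vp \lambda]]^2 \cdot D_{\E/\F}$, this gives exactly the relation $\delta_\vp^2((nN)^2 - 4N)\vo_\F = \varpi_\vp^{2a} D_{\E/\F}$ asserted in the statement.

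The last step is to compute $I_2$. Since $\iota_2$ is well-positioned, $\iota_2(\delta_\vp \lambda)$ lies in $\GL_2(\vo_\vp)$, and because $\vp \nmid \idl{N}$ the local test function is $f_\vp = \mathbbm{1}_{\PGL_2(\vo_\vp)}$; thus every normalized orbital integral $\mathrm{RS}-\vO_{\delta_\vp\lambda}(r,f_\vp)$ equals $1$. The weights $\mathrm{wt}(s; r, \E/\F)$ are supplied by Proposition \ref{ExpWts}, separately in the unramified, ramified, and split cases, and summing the geometric series in $r$ (telescoping the powers $(q^{1/2}Z)^r$ and $(q^{1/2}Z^{-1})^r$) collapses all three cases into the single closed form
\[
I_2 = \Vol(\GL_2(\vo_\vp)) \, \zeta_\E\!\left(\tfrac{1}{2}+s\right) q^{a/2} \cdot \frac{(q^{(a+1)s} - q^{-(a+1)s}) - \varepsilon_{\E,\vp} q^{-1/2}(q^{as} - q^{-as})}{q^s - q^{-s}},
\]
with the sign $\varepsilon_{\E,\vp}$ packaging the case distinction, as carried out in \cite{CWZ21}, \S4.4. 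Combining the three factors via $\norm[\delta_\vp]_\F^{1/2+s} = q^{-(1/2+s)\mathrm{ord}_\vp(\idl{J})}$ and $q^{a/2} \cdot q^{-(1/2+s)a} = q^{-as}$ yields the claimed formula. The only genuine point of care — rather than a true obstacle — is the split case of $\E/\F$ at $\vp$, where Proposition \ref{IwaQE} takes a different form and $\E$ is not a field; one has to verify that the Rankin--Selberg unfolding against the Godement section still produces the same geometric-series structure, which is precisely what the unified $\varepsilon_{\E,\vp}$-formulation of Proposition \ref{ExpWts} encodes.
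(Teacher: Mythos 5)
Your overall reduction -- conjugate by $\mathrm{diag}(\delta_\vp,1)$ to standardize, then well-positionize via (\ref{WellP}) and Lemma \ref{WellPExp}, then compute $a$ from the discriminant of $\vo_\F[\delta_\vp\lambda]$, then invoke the weights of Proposition \ref{ExpWts} and cite \cite[\S 4.4]{CWZ21} for the closed form of $I_2$ -- is exactly the route the paper takes, so the skeleton is fine.

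However, the assertion that ``every normalized orbital integral $\mathrm{RS}\text{-}\vO_{\delta_\vp\lambda}(r,f_\vp)$ equals $1$'' is false, and it is not a harmless slip. The well-positioned $\iota_2(\delta_\vp\lambda)$ has lower-left entry $v$ with $\mathrm{ord}_\vp(v)=a$, so $a(\varpi^r)\iota_2(\delta_\vp\lambda)a(\varpi^{-r})$ has lower-left entry $\varpi^{-r}v$; since $f_\vp=\mathbbm{1}_{\PGL_2(\vo_\vp)}$, the orbital integral is $1$ precisely when $0\le r\le a$ and $0$ when $r>a$ (and the same truncation by $a$ occurs in the split case via $n(\varpi^{-r})$). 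This cutoff is the entire content of the computation: the sum over all $r\ge 0$ of the weights in Proposition \ref{ExpWts} is a genuine (not telescoping) geometric series in $(q^{1/2}Z)^r$ and $(q^{1/2}Z^{-1})^r$, which diverges for every $s$ because $q^{1/2}\,|Z|$ and $q^{1/2}\,|Z|^{-1}$ cannot both be less than $1$. Moreover, it is only through this truncation at $r=a$ that the conductor exponent $a$ (and hence $D_{\E/\F}$) enters $I_2$ at all; if all orbital integrals were $1$, the answer could not depend on $a$. So while your final displayed formula is correct because you ultimately cite \cite[\S 4.4]{CWZ21}, the intermediate justification offered for it would not actually produce it.
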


	\subsection{A (Multiple) Dirichlet Series}

	Comparing Corollary \ref{NANPart} \& Lemma \ref{NAUnr} \& (\ref{ZagLFVar}) and taking Remark \ref{MaxCpMeas} into account, we get
	$$ I(s;n;f,\Phi) = D_{\F}^{-\frac{3}{2}} I_{\infty}(s;n;f_{\infty},\Phi_{\infty}) \cdot \zeta_{\F} \left( s+\frac{1}{2} \right) L^{(\idl{N})} \left( s+\frac{1}{2},(nuN)^2-4uN; \idl{J} \right) \cdot A(n,\idl{N}) $$ 
	where we have written $I_{\infty}(\cdots) = \prod_{v \mid \infty} I_v(\cdots)$ for simplicity, and recall
	$$ A(n,\idl{N}) := \prod_{\vp \mid \idl{N}} \left\{ \begin{matrix} \Nr(\vp)-1 & \text{if } n \in \vp \idl{J} \\ -1 & \text{otherwise} \end{matrix} \right. . $$
	Consequently, we get
\begin{equation}
	\frac{\Res_{s=\frac{1}{2}} I(s)}{D_{\F}^{-\frac{3}{2}} \zeta_{\F}^{(-1)}(1)} = \sum_{u \in U_2} \sum_{n \in \idl{J}/\{ \pm 1 \}} 2^{-\mathbbm{1}_{n=0}} I_{\infty} \left(
\frac{1}{2};n;f_{\infty},\Phi_{\infty} \right) L^{(\idl{N})}(1, (nuN)^2-4uN; \idl{J}) A(n,\idl{N}).
\label{IndBasis}
\end{equation} 

	It turns out that $L^{(\idl{N})}(1, (nuN)^2-4uN; \idl{J}) A(n,\idl{N})$ for $n \in \idl{J}/\{ \pm 1 \}$ is the finite part of the individual terms on the geometric side of the trace formula for our chosen test functions $f_{\vp}$ given in (\ref{TFNArch}). We will need the analytic continuation of several Dirichlet series associated to these $L$-values, especially in the case that some component of the weight vector $k_v=1$. For this purpose, we will apply the trace formula in the opposite direction, and will need to extend the validity of the trace formula to a sufficiently large class of text functions $f_{\infty}$ beyond $\Sch(\PGL_2(\R)^d // \PO_2(\R)^d)$, the space of bi-$\PO_2(\R)^d$-invariant Schwartz functions (see Theorem \ref{AutoKernSpecDSch}). 
	
	In this subsection, we present the simplest case $\F=\Q$ to illustrate the essential idea. To this end, we review and adapt a method given by Hejhal in \cite[\S 1.7]{He76}. At the infinite place, the Selberg transforms can be summarized by ($\Phi_{\infty}$ is different from other parts of the paper)
\begin{equation} 
	\left\{ \begin{matrix} f_{\infty} \begin{pmatrix} \lambda & \\ & 1 \end{pmatrix} = \Phi_{\infty}(\lambda + \lambda^{-1} - 2), & Q_{\infty}(x) = 2 \int_0^{\infty} \Phi_{\infty}(x+\nu^2) d\nu, \\
	g_{\infty}(u) = Q_{\infty}(e^u+e^{-u}-2), & h_{\infty}(r) = \int_{\R} g_{\infty}(u) e^{iru} du; \end{matrix} \right.
\label{SelbergTrans}
\end{equation}
	while the orbital integral is given by
\begin{equation} 
	\int_{\R} f_{\infty} \left( \begin{pmatrix} 1 & -x \\ & 1 \end{pmatrix} \begin{pmatrix} y & \\ & 1 \end{pmatrix} \begin{pmatrix} 1 & x \\ & 1 \end{pmatrix} \right) dx = \frac{g_{\infty}(\log y)}{\sqrt{y}-\sqrt{y}^{-1}}, \quad y > 1. 
\label{OrbIntR}
\end{equation}
	Hejhal \cite{He76_Survey, He76} considered even $\Phi_{\infty} \in \Cont_c^{\infty}(\R)$. This is not the correct class of functions\footnote{This is the source of ``a slight problem'' in the proof of \cite[Theorem 1.7.5]{He76}, which however does not seem to be ``easily handled''.} obtainable from the first equation in (\ref{SelbergTrans}), where the smoothness of $f_{\infty} \in \Cont_c^{\infty}(\PGL_2(\R) // \PO_2(\R))$ is defined by the differential operator $\lambda \partial_{\lambda}$. Equivalently, the corresponding class of $\Phi_{\infty}$ is
	$$ \mathcal{D}(\R_+) := \left\{ \Phi \in \Cont^{\infty}((0,\infty)) \ \middle| \ \exists \varphi \in \Cont_c^{\infty}(\R), \varphi(u)=\varphi(-u) \text{ such that } \Phi(e^u+e^{-u}-2) = \varphi(u), u > 0 \right\}. $$
	Observing another form of the orbital integral (\ref{OrbIntR})
\begin{align*}
	g_{\infty}(\log y) &= \extnorm{\sqrt{y}-\sqrt{y}^{-1}} \int_{\R} f_{\infty} \left( \begin{pmatrix} y & \\ & 1 \end{pmatrix} \begin{pmatrix} 1 & (1-y^{-1})x \\ & 1 \end{pmatrix} \right) dx \\
	&= \sqrt{y} \int_{\R} f_{\infty} \left( \begin{pmatrix} y & \\ & 1 \end{pmatrix} \begin{pmatrix} 1 & x \\ & 1 \end{pmatrix} \right) dx, \qquad y \neq 1,
\end{align*}
	we see that $g_{\infty}(u) \in \Cont_c^{\infty}(\R)$ if $f_{\infty} \in \Cont_c^{\infty}(\PGL_2(\R) // \PO_2(\R))$. The converse is also true, and follows directly from the following result due to Helgason and Gangolli, called the Paley-Wiener theorem for the spherical Fourier (or Harish-Chandra) transform.
	
\begin{theorem}
	The transform $f_{\infty} \to h_{\infty}$ is an isomorphism of $\Cont_c^{\infty}(\PGL_2(\R) // \PO_2(\R))$ and the Paley-Wiener space of even functions, i.e., the space of even entire functions $h$ such that there is $r > 0$ and $\gamma_N$ for any $N \in \Z_{\geq 0}$ such that $\norm[h(z)] \leq \gamma_N (1+\norm[z])^{-N} e^{r \norm[\Im z]}$.
\end{theorem}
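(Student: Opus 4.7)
The plan is to factor the transform $f_\infty \mapsto h_\infty$ as a composition of bijections between progressively more concrete function spaces, reducing at the end to the classical Fourier Paley--Wiener theorem. First, the passage $f_\infty \leftrightarrow \Phi_\infty$ is the $KAK$ parametrization of bi-$\PO_2(\R)$-invariant functions on $\PGL_2(\R)$, which identifies $\Cont_c^\infty(\PGL_2(\R) // \PO_2(\R))$ with $\mathcal{D}(\R_+)$ intersected with the space of compactly supported functions. Next, the substitution $x = 2\cosh u - 2$ is a smooth diffeomorphism $[0,\infty) \to [0,\infty)$, and since $\cosh$ is even with vanishing odd derivatives at $0$, it identifies compactly supported even smooth functions $g_\infty \in \Cont_c^\infty(\R)$ with compactly supported smooth functions $Q_\infty$ on $[0,\infty)$; moreover, $\mathrm{supp}(g_\infty) \subset [-r,r]$ corresponds to $\mathrm{supp}(Q_\infty) \subset [0, 2\cosh r - 2]$. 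The classical Paley--Wiener theorem then identifies the Fourier image $h_\infty$ of such $g_\infty$ with precisely the even entire functions satisfying the stated exponential-type bound of type $r$.

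The heart of the proof is therefore the bijectivity of the Abel transform
\[ Q(x) = 2\int_0^\infty \Phi(x+\nu^2)\,d\nu = \int_x^\infty \Phi(t)(t-x)^{-1/2}\,dt \]
on smooth compactly supported functions on $[0,\infty)$, together with preservation of supports. The explicit inversion comes from applying a second half-line fractional integral of order $\tfrac{1}{2}$ and swapping the order of integration via the Beta identity $\int_x^t (y-x)^{-1/2}(t-y)^{-1/2}\,dy = \pi$:
\[ \int_x^\infty Q(y)(y-x)^{-1/2}\,dy \;=\; \pi \int_x^\infty \Phi(t)\,dt, \]
so that $\Phi(x) = -\tfrac{1}{\pi}\tfrac{d}{dx}\int_x^\infty Q(y)(y-x)^{-1/2}\,dy$. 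Both the transform and its inverse are integrations over $[x,\infty)$, so the supports of $\Phi$ and $Q$ coincide as subsets of $[0,\infty)$, and in particular compactness is preserved in both directions.

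Assembling these steps yields the theorem in both directions. Given $f_\infty \in \Cont_c^\infty(\PGL_2(\R) // \PO_2(\R))$ with $\mathrm{supp}(\Phi_\infty) \subset [0,2\cosh r -2]$, one obtains $g_\infty \in \Cont_c^\infty(\R)$ even with $\mathrm{supp}(g_\infty) \subset [-r,r]$, and Fourier Paley--Wiener supplies the bound on $h_\infty$. Conversely, any even entire $h$ satisfying the bound is the Fourier transform of some even $g \in \Cont_c^\infty(\R)$ supported in $[-r,r]$, from which the inverse change of variable produces $Q$, the inverse Abel transform produces $\Phi$, and finally $f$. The main obstacle is the delicate verification that smoothness is preserved by the Abel transform and its inverse at the endpoint $x=0$, where the kernel $(t-x)^{-1/2}$ is singular; this is handled most cleanly by using the form $Q(x) = 2\int_0^\infty \Phi(x+\nu^2)d\nu$, which involves no kernel singularity, and by justifying differentiation under the integral sign together with repeated use of integration by parts to transfer derivatives between $\Phi$ and $Q$.
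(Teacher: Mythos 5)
The paper itself does not prove this theorem; the body of its proof is a bare citation to Helgason and to Gangolli--Varadarajan. Your argument supplies an actual proof, and it is the standard one in rank one, which (stripped of the higher-rank apparatus) is also the route taken in the cited sources: factor the spherical transform $f_\infty \mapsto h_\infty$ through the Abel transform and then invoke the classical Euclidean Paley--Wiener theorem. The outline is correct, and you rightly flag where the delicacy sits (regularity of the Abel transform and its inverse at the boundary, and the passage through Whitney's even-function theorem). Two points of formulation are worth tightening. First, $u\mapsto 2\cosh u - 2$ is a smooth bijection of $[0,\infty)$ but \emph{not} a diffeomorphism at $u=0$, since its derivative vanishes there; what you in fact use, correctly, is that $2\cosh u - 2$ is a smooth function of $u^2$ with nonvanishing derivative at the origin, so that Whitney's theorem identifies even $g_\infty \in \Cont_c^\infty(\R)$ with $Q_\infty \in \Cont_c^\infty([0,\infty))$. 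The same Whitney argument along the split torus, together with the smoothness on all of $\PGL_2(\R)$ of the bi-$\PO_2(\R)$-invariant $g\mapsto \tr(g\,{}^t g)/\norm[\det g] - 2$, is what justifies the very first identification $\Cont_c^\infty(\PGL_2(\R)//\PO_2(\R)) \cong \mathcal{D}(\R_+)$; since the $KAK$ parametrization is singular along $K$, this step is not purely formal and should be argued rather than merely asserted. Second, the supports of $\Phi$ and $Q$ under the Abel transform do \emph{not} literally coincide: the transform $Q(x)=2\int_0^\infty\Phi(x+\nu^2)\,d\nu$ spreads mass leftward toward $0$, so in general only the suprema of the supports agree. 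That is exactly what the Paley--Wiener estimate requires, so the conclusion is unaffected, but the wording should say ``same upper endpoint'' rather than ``supports coincide.''
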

\begin{proof}
	See \cite[Introduction, Theorem 4.2]{Hel84} or \cite[Theorem 6.6.8]{GV88}.
\end{proof}

	Let $(g_{\infty},h_{\infty})$ be a Fourier pair of functions related by the fourth equation in (\ref{SelbergTrans}). First assume $g_{\infty} \in \Cont_c^{\infty}(\R)$ and even. Then it is given by a $f_{\infty} \in \Cont_c^{\infty}(\PGL_2(\R) // \PO_2(\R))$ via (\ref{SelbergTrans}). Inserting this $f_{\infty}$ and $f_p$ given in (\ref{TFNArch}) below in the trace formula, we get
\begin{align}
	&\quad \sum_{r} h_{\infty}(r) \varepsilon_{r} \nonumber \\
	&= \sum_n  g_{\infty} \left( \log \left( \frac{nN+\sqrt{(nN)^2-4N}}{2} \right) \right) \frac{ L^{(N)}(1, (nN)^2-4N) A(n,N) }{\sqrt{\frac{(nN)^2-4N}{4N} }},
\label{MaassBiasF}
\end{align}
	where in the notation
\begin{itemize}
	\item[(1)] $r$ runs over those elements in $\C$ for which $\pi(\norm^{ir}, \norm^{-ir})$ appears as the infinite component of a cuspidal representation $\pi_r$ (with multiplicity) of usual conductor $\idl{N}^3$, 
	\item[(2)] $\varepsilon_r \in \{ \pm 1 \}$ is the epsilon factor of $\pi_r$,
	\item[(3)] $n \in \Z/\{ \pm 1 \}$ such that $(nN)^2-4N$ is not a square and is positive.
\end{itemize}
	We want to extend the validity of (\ref{MaassBiasF}) to a larger class of pairs of functions $(g_{\infty}, h_{\infty})$.
	
\begin{proposition}
	Let $\delta, \xi > 0$ be fixed and arbitrary. Let $(g_{\infty},h_{\infty})$ be a Fourier pair of even functions related by the fourth equation in (\ref{SelbergTrans}). Assume that $h(r)$ is holomorphic on $\norm[\Im r] \leq \frac{1}{2}+\delta$ and satisfy $ \norm[h_{\infty}(r)] \ll (1+\norm[r])^{-2-\xi}$. Then the trace formula (\ref{MaassBiasF}) holds for such pair $(g_{\infty}, h_{\infty})$.
\label{TFExtQ}
\end{proposition}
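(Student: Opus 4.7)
The plan is to establish (\ref{MaassBiasF}) by approximating the given pair $(g_\infty, h_\infty)$ by a sequence $(g_\infty^{(m)}, h_\infty^{(m)})$ with $g_\infty^{(m)} \in \Cont_c^{\infty}(\R)$ even, for which the formula is already known via the Paley--Wiener theorem, and then passing to the limit on both sides by dominated convergence. A preliminary bound I shall use repeatedly is $|g_\infty(u)| \ll_{\delta} e^{-(1/2+\delta)|u|}$, obtained by shifting the contour in $g_\infty(u) = \tfrac{1}{2\pi}\int_{\R} h_\infty(r) e^{-iru}\,dr$ to $\Im r = \mp(1/2+\delta)$ according to the sign of $u$ (legitimate by the holomorphy and decay hypotheses on $h_\infty$).

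For the construction, I fix even $\phi, \chi \in \Cont_c^{\infty}(\R)$ with $\int \phi = 1$, $\hat{\chi}(0) = 2\pi$, and $\chi \equiv 1$ on $[-1,1]$, set $\phi_m(u) := m\phi(mu)$, and define
\[
g_\infty^{(m)}(u) := \chi(u/m)\,(g_\infty \ast \phi_m)(u),
\]
which is smooth, even, and compactly supported. A direct Fourier computation gives
\[
h_\infty^{(m)}(r) = \bigl[\hat{\phi}(r/m)\, h_\infty(r)\bigr] \ast \rho_m(r), \qquad \rho_m(r) := \tfrac{m}{2\pi}\hat{\chi}(mr),
\]
with $\int \rho_m = 1$, so that $h_\infty^{(m)}(r) \to h_\infty(r)$ pointwise on $\R$ as $m \to \infty$. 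By the Paley--Wiener theorem, the formula (\ref{MaassBiasF}) holds for each $(g_\infty^{(m)}, h_\infty^{(m)})$.

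For the geometric side, the exponential bound on $g_\infty$ combined with Lemma \ref{GenZagLBd} shows the $n$th summand is $\ll_{\epsilon} n^{-3/2-\delta+\epsilon}$, so the series converges absolutely. The estimate $|g_\infty^{(m)}(u)| \leq C e^{-(1/2+\delta)|u|}$ uniformly in $m \geq 1$, which follows from the exponential decay of $g_\infty$ and the compact support of $\phi$ and $\chi$, then lets dominated convergence furnish the limit. For the spectral side, Weyl's law for level $\idl{N}^3$ gives $\#\{r : |r|\leq T\} = O(T^2)$, so the hypothesis $|h_\infty(r)| \ll (1+|r|)^{-2-\xi}$ suffices for absolute convergence. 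To apply dominated convergence I need the uniform bound $|h_\infty^{(m)}(r)| \leq C(1+|r|)^{-2-\xi}$, which I shall prove by splitting the convolution by $\rho_m$ into the regions $|s| \leq |r|/2$ (where the decay of $h_\infty$ gives the bound directly) and $|s| > |r|/2$ (where the rapid decay of $\hat{\chi}$, guaranteed by $\chi \in \Cont_c^{\infty}(\R)$, yields a super-polynomial tail bound).

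The main obstacle is this final uniform spectral bound: convolution with $\rho_m$ does not a priori preserve polynomial decay, and the argument depends essentially on the rapid decay of $\hat{\chi}$. A secondary routine point is that the supercuspidal nature of $f_{\vp}$ at each $\vp \mid \idl{N}$ kills the Eisenstein part of the trace formula, so the spectral side really reduces to cuspidal Maass forms of conductor $\idl{N}^3$ as in (\ref{MaassBiasF}). Everything else is routine bookkeeping with Fourier inversion and the Paley--Wiener theorem.
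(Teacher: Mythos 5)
Your proposal is correct, but it takes a genuinely different route from the paper. The paper proceeds in two stages: first, it proves the formula when $h_\infty$ decays faster than any polynomial, by pure truncation $g_m = g_\infty\phi_m$ and integration by parts (which is only available because in that special case $g_\infty$ has all derivatives exponentially decaying); second, it reduces the general case to that one via the Gaussian pair $g_\epsilon = g_\infty * (\text{Gaussian})$, $h_\epsilon(r) = h(r)e^{-\epsilon r^2}$, where the uniform majorant on the spectral side comes for free since $|h_\epsilon(r)| \le |h(r)|$ on $\R$. You instead fold the smoothing and the truncation into a single family $g_\infty^{(m)} = \chi(\cdot/m)\,(g_\infty * \phi_m)$, which bypasses the need for a separate regularity reduction, at the price of having to prove the uniform spectral majorant $|h_\infty^{(m)}(r)| \ll (1+|r|)^{-2-\xi}$ directly from the convolution formula and the rapid decay of $\hat\chi$. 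This is structurally cleaner (one limit instead of two nested ones), and the convolution estimate you sketch — using $|h_\infty|$-decay on $|r-s|\ge|r|/2$ and $\hat\chi$-decay on $|s|>|r|/2$, with $\|\rho_m\|_{L^1}$ bounded uniformly in $m$ because $\hat\chi\in L^1$ — does go through. The paper's two-step method is arguably more robust (the Gaussian step makes the spectral majorant trivial), whereas yours is more direct but demands a careful convolution split.

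Two small points should be addressed before the argument is airtight. First, the spectral sum in \eqref{MaassBiasF} runs over $r\in\C$, and there may be finitely many purely imaginary $r$ from complementary-series forms; you only assert pointwise convergence $h_\infty^{(m)}(r)\to h_\infty(r)$ on $\R$. The uniform bound $|g_\infty^{(m)}(u)|\ll e^{-(1/2+\delta)|u|}$ plus dominated convergence in $\int g_\infty^{(m)}(u)e^{iru}\,du$ in fact gives convergence on the whole strip $|\Im r|\le 1/2$, which covers those finitely many terms, but that should be said. Second, the normalization $\hat\chi(0)=2\pi$ you impose is both unnecessary and potentially inconsistent with your other constraints; what makes $\rho_m$ an approximate identity is $\chi(0)=1$ (which already follows from $\chi\equiv 1$ on $[-1,1]$) via Fourier inversion, not a condition on $\int\chi$. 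Neither of these is a conceptual gap.
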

\begin{remark}
	Note that by a routine contour shift argument, we have $\norm[g_{\infty}(u)] \ll e^{-\left( \frac{1}{2}+\delta \right)\norm[u]}$. By Weyl's law, Lemma \ref{GenZagLBd} and $\norm[A(n,N)] \leq \varphi(N)$, both sides of (\ref{MaassBiasF}) are absolutely convergent under the assumption. 
\end{remark}
\begin{proof}
	First consider the special case that $h_{\infty}(r)$ satisfies the required bound for \emph{any} $\xi > 0$. Hence $g_{\infty}^{(n)}(u)$ satisfies the bound mentioned in the above remark for any integer $n \geq 0$. Let $\phi(x) \in \Cont^{\infty}(\R_{\geq 0})$ such that: (a) $0 \leq \phi(x) \leq 1$; (b) $\phi(x)=1$ for $0 \leq x \leq 1$; (c) $\phi(x) = 0$ for $x \geq 2$; (d) $\phi(x)$ is monotonically decreasing. Define $g_m(u) := g_{\infty}(u)\phi_m(u)$ for
	$$ \phi_m(x) = \left\{ \begin{matrix} 1, & \norm[x] \leq m \\ \phi(\norm[x]-m), & \norm[x] > m \end{matrix} \right. . $$
	Then $g_m \in \Cont_c^{\infty}(\R)$ and is even. Hence the trace formula (\ref{MaassBiasF}) holds for the Fourier pairs $(g_m,h_m)$. We of course have $\norm[g_m(u)] \leq \norm[g_{\infty}(u)]$. By integration by parts and the bounds $\norm[\phi_m^{(n)}] \ll_n 1$ uniformly in $m$, we also have: (1) $h_m(r) = \int_{\R} g_{\infty}(u) \phi_m(u) e^{iru} du \ll (1+\norm[r])^{-3}$; (2) $h_m(r) \to h(r)$ as $m \to \infty$ uniformly for $\norm[\Im(r)] \leq 1/2$. Hence the dominated convergence theorem applies to both sides of (\ref{MaassBiasF}) for $(g_m,h_m)$, and gives the required trace formula for $(g_{\infty}, h_{\infty})$. For the general case, we consider the family of Fourier pairs $(g_{\epsilon},h_{\epsilon})$ for $0 < \epsilon < 1$ defined by
	$$ g_{\epsilon}(u) := \int_{\R} g_{\infty}(x) \cdot \frac{1}{\sqrt{2\pi \epsilon}} e^{-\frac{(x-u)^2}{4\epsilon}} dx, \quad h_{\epsilon}(r) = h(r) e^{-\epsilon r^2}. $$
	Then $(g_{\epsilon},h_{\epsilon})$ lies in the above special case and the trace formula (\ref{MaassBiasF}) holds for them. For any $N \geq 1$, we have uniformly in $\epsilon$
\begin{align*}
	g_{\epsilon}(u) &= \int_{\norm[x-u] \geq \norm[u]/N} g_{\infty}(x) \cdot \frac{1}{\sqrt{2\pi \epsilon}} e^{-\frac{(x-u)^2}{4\epsilon}} dx + \int_{\norm[x-u] \leq \norm[u]/N} g_{\infty}(x) \cdot \frac{1}{\sqrt{2\pi \epsilon}} e^{-\frac{(x-u)^2}{4\epsilon}} dx \\
	&\ll_N \max_{x \in \R} \norm[g_{\infty}(x)] \cdot \int_{\norm[x] \geq \norm[u]/(N\sqrt{\epsilon})} \frac{1}{\sqrt{2\pi}} e^{-\frac{x^2}{4}} dx + \max_{\norm[x-u] \leq \norm[u]/N} \norm[g_{\infty}(x)] \\
	&\ll_N e^{-\frac{u^2}{4N^2}} + e^{-\left( \frac{1}{2}+\delta \right) (1-N^{-1}) \norm[u]} \ll_N e^{-\left( \frac{1}{2}+\delta \right) (1-N^{-1}) \norm[u]}.
\end{align*}
	Taking $N$ large enough but fixed, the right hand side becomes $e^{-\left( \frac{1}{2}+\delta' \right) \norm[u]}$ for some $\delta' > 0$. Thus the geometric sides converge to the geometric side for $g_{\infty}$ by the dominated convergence theorem. The spectral sides obviously converge to the spectral side for $h_{\infty}$, too. We conclude the proof.
\end{proof}	
	
\begin{corollary}
	The following Dirichlet series
	$$ D_N(s) := \sideset{}{_{\substack{n=1 \\ (nN)^2-4N \neq \square}}^{\infty}} \sum n^{-s} L^{(N)}(1, (nN)^2-4N) A(n,N) $$
	is absolutely convergent for $\Re s > 1$ and has a meromorphic continuation which is regular at $s=1$.
\end{corollary}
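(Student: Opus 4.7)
The plan is to establish absolute convergence on $\Re s > 1$ directly from Lemma \ref{GenZagLBd}, and to obtain the meromorphic continuation by inverting the trace formula (\ref{MaassBiasF}) as extended by Proposition \ref{TFExtQ}.

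For the first claim, Lemma \ref{GenZagLBd} applied with $\idl{J} = \vo = \Z$ yields $|L^{(N)}(1, (nN)^2 - 4N)| \ll_{\epsilon, N} n^{2\epsilon}$, while $|A(n, N)| \leq \varphi(N)$ trivially. Hence each summand of $D_N(s)$ is $O_{\epsilon, N}(n^{-s + 2\epsilon})$, so the series converges absolutely for $\Re s > 1$.

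For the continuation, I would take the even test function $h_{\infty, s}(r) := (r^2 + (s - 1)^2)^{-M}$ for a sufficiently large integer $M$. It satisfies $|h_{\infty, s}(r)| \ll (1 + |r|)^{-2M}$, and is holomorphic in $r$ on the strip $|\Im r| \leq 1/2 + \delta$ provided $|\Re(s - 1)| > 1/2 + \delta$. Thus for $\Re s > 3/2 + \delta$, Proposition \ref{TFExtQ} applies, and the trace formula (\ref{MaassBiasF}) holds for this test function. Its Fourier inverse $g_{\infty, s}(u)$ is an even function behaving like a polynomial of degree $M - 1$ in $|u|$ times $e^{-(s-1)|u|}$. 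Substituting into the geometric side of (\ref{MaassBiasF}), with $\alpha_n := (nN + \sqrt{(nN)^2 - 4N})/2$, and Taylor-expanding $\log \alpha_n = \log(nN) + O(n^{-2})$ together with $\sqrt{((nN)^2 - 4N)/(4N)} = (nN)/(2\sqrt{N}) \cdot (1 + O(n^{-2}))$, one rearranges the geometric side as
$$ c_M(s)\, D_N(s) \;+\; \sum_{j \geq 1} c_{M, j}(s)\, D_N(s + j), $$
where $c_M(s)$ is a nonvanishing meromorphic function (an explicit product of Gamma factors and powers of $N$) and the tail series is absolutely convergent, hence holomorphic, in a half-plane containing $s = 1$. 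On the spectral side, $\sum_\pi \varepsilon_\pi h_{\infty, s}(r_\pi)$ converges absolutely by Weyl's law and is a sum of simple meromorphic functions of $s$ with poles only at $s = 1 \pm i r_\pi$ for spectral parameters $r_\pi$ of Maass cusp forms of conductor $N^3$. Equating the two sides and dividing by $c_M(s)$ furnishes the meromorphic continuation of $D_N(s)$ to, say, $\Re s > 0$.

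The main obstacle is regularity precisely at $s = 1$: the spectral side has a pole there if and only if some Maass cusp form of conductor $N^3$ has spectral parameter $r_\pi = 0$ (Laplacian eigenvalue exactly $1/4$). I would neutralize this by the flexibility in the test function, for instance by replacing $h_{\infty, s}(r)$ with $r^2 \, (r^2 + (s - 1)^2)^{-M}$, which annihilates any such contribution on the spectral side while introducing at most further absolutely convergent tail series $\sum_{j \geq 0} c'_{M, j}(s) D_N(s + j)$ on the geometric side, all holomorphic at $s = 1$ by the first paragraph. This yields the desired regularity of $D_N(s)$ at $s = 1$.
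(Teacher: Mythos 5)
Your proposal follows the paper's overall strategy — invoke Proposition \ref{TFExtQ} to get a trace formula parametrized by $s$, read the continuation of $D_N$ off the spectral side — and the first paragraph (absolute convergence from Lemma \ref{GenZagLBd}) is fine. The difficulty is in the specific test function.

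The paper chooses the Hejhal pair
\[
g_{\infty}(u) = \frac{1}{2s}\,e^{-s|u|} - \frac{1}{2\beta}\,e^{-\beta|u|}, \qquad h_{\infty}(r) = \frac{1}{r^2+s^2} - \frac{1}{r^2+\beta^2},
\]
precisely because $g_\infty$ is a linear combination of \emph{pure} exponentials in $|u|$. Plugging $u = \log\alpha_n$ and Taylor-expanding $\alpha_n$ and $\sqrt{\Delta_n}$ in powers of $n^{-2}$ then yields, on the geometric side, a genuine linear combination $\sum_j c_j(s)\,D_N(s+1+2j)$ of \emph{shifted copies} of $D_N$, with coefficients $c_j(s)$ that are polynomials in $s$, and with the tail $j\ge 1$ absolutely convergent. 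One then solves for the leading term cleanly. By contrast, the Fourier inverse of your $h_{\infty,s}(r) = (r^2+(s-1)^2)^{-M}$ is (up to constants) a polynomial $P_{M-1}(|u|)$ of degree $M-1$ times $e^{-(s-1)|u|}$ — you say as much. Substituting $u = \log\alpha_n$ then produces factors $(\log\alpha_n)^k = (\log n + \tfrac12\log N + O(n^{-2}))^k$, and since $\sum_n (\log n)^k\, n^{-\sigma} L^{(N)}(1,\cdot)A(n,N) = (-1)^k D_N^{(k)}(\sigma)$, the geometric side rearranges as a linear combination of the \emph{derivatives} $D_N^{(k)}(s+j)$ for $0\le k\le M-1$, $j\ge 0$ — not the display $c_M(s)D_N(s) + \sum_{j\ge 1}c_{M,j}(s)D_N(s+j)$ you wrote, which omits all the $k\ge 1$ terms. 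Once derivative terms are present, the step ``equating and dividing by $c_M(s)$'' no longer produces $D_N(s)$; one is left with a functional–differential identity relating $D_N, D_N', \dots, D_N^{(M-1)}$, and extracting regularity of $D_N$ itself requires a genuinely further argument (e.g.\ a leading-singularity bootstrap), which your sketch does not supply. The same flaw infects the $r^2(r^2+(s-1)^2)^{-M}$ variant. The fix is not difficult: replace your test function with the paper's, whose geometric side has no logarithmic factors.

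On the positive side, your observation about a potential pole coming from a Maass newform of level $N^3$ with $r_\pi = 0$ (Laplacian eigenvalue exactly $1/4$) is a real subtlety, and multiplying the test function by $r^2$ is the natural way to decouple such a contribution on the spectral side; the paper's invocation of ``any bound towards the Ramanujan–Petersson conjecture'' is not literally adequate here, since $r_\pi = 0$ is allowed by RP and produces a pole exactly at the point one needs regularity. But addressing that concern does not repair the derivative gap above, which is the principal defect in the proposal as written.
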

\begin{proof}
	By Proposition \ref{TFExtQ}, we apply the trace formula (\ref{MaassBiasF}) with the Fourier pair \cite[(7.9) \& (7.10)]{He76_Survey}
	$$ g_{\infty}(u) = \frac{1}{2s} e^{-s \norm[u]} - \frac{1}{2\beta} e^{-\beta \norm[u]}, \quad h_{\infty}(r) = \frac{1}{r^2+s^2} - \frac{1}{r^2+\beta^2}, $$
	where $\beta > 2$ is large and fixed. We obtain
\begin{align*} 
	&\quad \frac{1}{2} \left( \frac{1}{2s} - \frac{1}{2\beta} \right) L^{(N)}(1,-4N) \varphi(N) + \sideset{}{_{\substack{n \geq 1 \\ (nN)^2-4N \neq \square}}} \sum \\
	&\quad \left\{ \frac{1}{2s} \left( \frac{n \sqrt{N} + \sqrt{\norm[n^2N-4]}}{2} \right)^{-s} - \frac{1}{2\beta} \left( \frac{n \sqrt{N} + \sqrt{\norm[n^2N-4]}}{2} \right)^{-\beta} \right\} \cdot \frac{L^{(N)}(1,(nN)^2-4N) A(n,N)}{\sqrt{\extnorm{\frac{(nN)^2-4N}{4N}}}} \\
	&= \sum_k \left( \frac{1}{r_k^2+s^2} - \frac{1}{r_k^2+\beta^2} \right) \varepsilon_k,
\end{align*}
	where the sum of $k$ runs over the Maass new forms of Lapacian eigenvalue $1/4+r_k^2$ and level $N^3$, and $\varepsilon_k \in \{ \pm 1 \}$ is the root number. By Weyl's law the right hand side has meromorphic continuation to $s \in \C$, which is regular at $s=1/2$ by any bound towards the Ramanujan-Petersson conjecture. This implies readily the required properties of $D_N(s+1/2)$ since their difference is absolutely convergent hence analytic at $s=1/2$.
\end{proof}

	\subsection{Global Computation}
	
	Recall that $f_{\infty}=\otimes_{v \mid \infty} f_v$, and each $f_v$ has a parameter $\delta_v > 0$. For convenience of notation, we number the archimedean places as $\left\{ v_j \ \middle| \ 1 \leq j \leq d \right\}$, and abbreviate the subscript $v_j$ as $j$. We shall let $\delta_j \to 0^+$ consecutively for $1 \leq j \leq d$ in the above equation, and establish the following lemma by induction.
\begin{lemma}
	For $0 \leq l \leq d$, we have
\begin{align*}
	&\quad \lim_{\delta_1 \to 0^+} \cdots \lim_{\delta_l \to 0^+} \frac{\Res_{s=\frac{1}{2}} I(s)}{D_{\F}^{-\frac{3}{2}} \zeta_{\F}^{(-1)}(1)} \\
	&= \sum_{u \in U_2} \sum_{n \in \idl{J}/\{ \pm 1 \}} 2^{-\mathbbm{1}_{n=0}} \left( \prod_{j=1}^l \frac{(4N_j)^{k_j}}{2\pi} \Re \left( \frac{\mathbbm{1}_{\geq 0}((4uN-(nuN)^2)_j)}{\left( \sqrt{(4uN-(nuN)^2)_j} +i (nuN)_j \right)^{2k_j-1}} \right) \right) \cdot \\
	&\quad \prod_{j=l+1}^d I_j \left( \frac{1}{2}; n; f_j; \Phi_j \right) \cdot L^{(\idl{N})}(1, (nuN)^2-4uN; \idl{J}) A(n,\idl{N}).
\end{align*}
\end{lemma}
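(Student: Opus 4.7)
The plan is to proceed by induction on $l$, with base case $l=0$ being precisely equation (\ref{IndBasis}). For the inductive step $l-1 \to l$, I would pass $\lim_{\delta_l \to 0^+}$ inside the (finite) sum over $u \in U_2$ and the sum over $n \in \idl{J}/\{\pm 1\}$ in the formula at level $l-1$, replacing the factor $I_l(1/2;n;f_l,\Phi_l)$ by its pointwise limit and justifying the interchange by dominated convergence.

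For the pointwise limit, writing $\Delta_l := ((nuN)^2 - 4uN)_l$ and $t_l := (nuN)_l$, the local archimedean factor reduces up to an explicit constant to $I(h_{\delta_l/N_l};\Delta_l,t_l;1) + I(h_{\delta_l/N_l};\Delta_l,-t_l;1)$. When $\Delta_l \leq 0$ (equivalently $(4uN-(nuN)^2)_l \geq 0$), Lemma \ref{SpAC} yields exactly the claimed real-part expression, which reproduces the indicator $\mathbbm{1}_{\geq 0}$. When $\Delta_l > 0$, I would apply Lemma \ref{GenAC} together with $\Mellin{h_{\delta_l/N_l}}(s) = (N_l/\delta_l)^s \Gamma(s)$ to express the local factor as a contour integral along $\Re(s)=c>0$ whose integrand is proportional to $|t_l|^{1-2k_l-2s} P_{k_l}(s) (N_l/\delta_l)^s \Gamma(s)$. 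The crucial point is that $P_{k_l}(0)=0$ by (\ref{Pk0}), killing the pole of $\Gamma(s)$ at $s=0$; the contour can therefore be shifted without crossing any residues to any $c \in (\max(1/2-k_l,-1),0)$. The shifted integral is $O_c(|t_l|^{1-2k_l-2c}\delta_l^{-c})$, which vanishes as $\delta_l \to 0^+$ since $-c>0$. This is exactly what the indicator $\mathbbm{1}_{\geq 0}((4uN-(nuN)^2)_l)$ in the stated formula encodes.

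To justify the interchange of limit and summation, the shifted-contour estimate above produces $|I_l(1/2;n;f_l,\Phi_l)| \ll_c (1+|t_l|)^{1-2k_l-2c}$ uniformly for $\delta_l \in (0,1]$, while in the elliptic case at $v_l$ Lemma \ref{SpAC} supplies a uniform $O(1)$ bound. Combining this with the sub-polynomial bound on $L^{(\idl{N})}(1,(nuN)^2-4uN;\idl{J})$ from Lemma \ref{GenZagLBd}, the obvious bound $|A(n,\idl{N})|=O(1)$, and analogous contour-shifted bounds on the still-unfrozen factors $I_j(1/2;n;f_j,\Phi_j)$ for $j>l$ (with $\delta_j>0$ held fixed during this step of the induction), I would assemble a dominating function summable over $n \in \idl{J}$: the indicators at already-frozen places $j<l$ confine $n$ to a bounded set at those places, and the polynomial decay supplied by the archimedean factors at the remaining places delivers summability over the free directions of the lattice.

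The main obstacle is the borderline case $k_l=1$, where the admissible contour shift is only $c \in (-1/2,0)$ and the exponent $1-2k_l-2c = -1-2c$ lies in $(-1,0)$, so that $|t_l|^{-1-2c}$ alone fails to be summable over the free directions of $n \in \idl{J}$. I would overcome this by exploiting the \emph{joint} archimedean decay: each of the still-present factors $I_j$ with $j\geq l$ admits the same type of contour-shift bound and contributes an exponent strictly less than $-1$ in the corresponding real direction, so that the product of all archimedean factors decays faster than $|n|^{-d}$ on $\idl{J} \subset \F\otimes_\Q \R \simeq \R^d$, restoring absolute convergence and enabling the dominated convergence theorem to conclude the induction.
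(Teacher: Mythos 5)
Your base case and the treatment of the elliptic part ($S_1$ in the paper's notation, the terms with $\Delta_{l+1}\leq 0$) match the paper's argument: Lemma \ref{SpAC} gives the pointwise limit, and the frozen indicators together with the rapid decay of the still-fixed $I_j$ ($\delta_j>0$ held constant, so one can shift the contour in Lemma \ref{GenAC} arbitrarily far to the right) produce a dominating function of rapid decay in every direction.

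The gap is in your treatment of the hyperbolic part $S_2$, and it is precisely the gap the paper's introduction flags as the reason a new idea is required. Your proposed remedy asserts that for each still-present factor the contour-shift bound ``contributes an exponent strictly less than $-1$,'' but this is false when $k_j=1$. For $k_j=1$ the admissible shift is only to $c\in(-1/2,0)$ (the nearest pole of $P_1(s)$ sits at $s=-1/2$, and the $\Gamma(s)$ pole at $s=-1$ cannot be crossed either), and the exponent $1-2k_j-2c=-1-2c$ then lies in $(-1,0)$. Thus the $\delta_{l+1}$-uniform dominating function you would assemble behaves like $\norm[t_{l+1}]^{-1+\epsilon}$ in the $v_{l+1}$ direction — and a product of factors $\norm[t_j]^{-1+\epsilon_j}$ over the free directions of $\idl{J}$ is \emph{not} summable, so the dominated convergence theorem simply does not apply. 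There is no choice of contours that makes the product decay faster than $\norm[n]^{-d}$: the Ramanujan-type constraint on the allowable $c$'s makes this impossible whenever any free place carries weight $2k_j=2$.

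The paper's actual proof takes a genuinely different route for $S_2$. It first writes $S_2=\int_{(c)}\delta_{l+1}^{-s}\,D(2s+2k_{l+1}-1)\,P_{k_{l+1}}(s)\,\Gamma(s)\,\frac{ds}{2\pi i}$ for $c\gg 1$, where the interchange of sum and integral is legitimate because the Dirichlet series $D(s)$ converges absolutely for $\Re s>1$. It then establishes the meromorphic continuation of $D(s)$ past the critical abscissa \emph{not} by estimation but by running the trace formula in reverse: the inductive hypothesis is recognized as the geometric side of a Selberg-type trace formula for the remaining places, its spectral side (a sum over cuspidal $\pi$ with explicit test data, in the spirit of Proposition \ref{TFExtQ}) is visibly meromorphic and regular in $\Re s>1/2-\RamC$, and this gives the continuation of $D(s)$. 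Only after the continuation is established does one shift the contour to $c<0$ and conclude $S_2\to 0$. This is the ``crucial analytic continuation of a certain Dirichlet series \dots\ via an application of the trace formula in the inverse direction'' that the introduction identifies as the paper's major innovation; your proposal entirely bypasses it, and the elementary domination argument cannot replace it.
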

\begin{proof}
	The case $l=0$ is (\ref{IndBasis}). Assume the validity of the desired equation for some $0 \leq l < d$. Writing $1 = \mathbbm{1}_{\geq 0}((4uN-(nuN)^2)_{l+1}) + \mathbbm{1}_{< 0}((4uN-(nuN)^2)_{l+1})$, we have
\begin{align*}
	&\quad \lim_{\delta_1 \to 0^+} \cdots \lim_{\delta_l \to 0^+} \frac{\Res_{s=\frac{1}{2}} I(s)}{D_{\F}^{-\frac{3}{2}} \zeta_{\F}^{(-1)}(1)} \\
	&= \sum_{u \in U_2} \sum_{n \in \idl{J}/\{ \pm 1 \}} 2^{-\mathbbm{1}_{n=0}} \left( \prod_{j=1}^l \frac{(4N_j)^{k_j}}{2\pi} \Re \left( \frac{\mathbbm{1}_{\geq 0}((4uN-(nuN)^2)_j)}{\left( \sqrt{(4uN-(nuN)^2)_j} +i (nuN)_j \right)^{2k_j-1}} \right) \right) \cdot \\
	&\quad \mathbbm{1}_{\geq 0}((4uN-(nuN)^2)_{l+1}) \prod_{j=l+1}^d I_j \left( \frac{1}{2}; n; f_j; \Phi_j \right) \cdot L^{(\idl{N})}(1, (nuN)^2-4uN; \idl{J}) A(n,\idl{N}) + \\
	&\quad \sum_{u \in U_2} \sum_{n \in \idl{J}/\{ \pm 1 \}} 2^{-\mathbbm{1}_{n=0}} \left( \prod_{j=1}^l \frac{(4N_j)^{k_j}}{2\pi} \Re \left( \frac{\mathbbm{1}_{\geq 0}((4uN-(nuN)^2)_j)}{\left( \sqrt{(4uN-(nuN)^2)_j} +i (nuN)_j \right)^{2k_j-1}} \right) \right) \cdot \\
	&\quad \mathbbm{1}_{< 0}((4uN-(nuN)^2)_{l+1}) \prod_{j=l+1}^d I_j \left( \frac{1}{2}; n; f_j; \Phi_j \right) \cdot L^{(\idl{N})}(1, (nuN)^2-4uN; \idl{J}) A(n,\idl{N}) \\
	&=: S_1 + S_2.
\end{align*}
	By Lemma \ref{SpAC}, we have
\begin{align*}
	\lim_{\delta_{l+1} \to 0^+} S_1 &= \sum_{u \in U_2} \sum_{n \in \idl{J}/\{ \pm 1 \}} 2^{-\mathbbm{1}_{n=0}} \left( \prod_{j=1}^{l+1} \frac{(4N_j)^{k_j}}{2\pi} \Re \left( \frac{\mathbbm{1}_{\geq 0}((4uN-(nuN)^2)_j)}{\left( \sqrt{(4uN-(nuN)^2)_j} +i (nuN)_j \right)^{2k_j-1}} \right) \right) \cdot \\
	&\quad \prod_{j=l+2}^d I_j \left( \frac{1}{2}; n; f_j; \Phi_j \right) \cdot L^{(\idl{N})}(1, (nuN)^2-4uN; \idl{J}) A(n,\idl{N}),
\end{align*} 
	which is the desired right hand side for $l+1$. Note that we can take the limit in the sum because the summand is a function in $n$ of uniformly rapid decay in every direction in $\R^d$. It remains to show $S_2 \to 0$ as $\delta_{l+1} \to 0^+$. Since $\Mellin{h_{\delta_{l+1}}}(s) = \delta_{l+1}^{-s} \Gamma(s)$, we may apply Lemma \ref{GenAC} to get
	$$ S_2 = \int_{(c)} \delta_{l+1}^{-s} D(2s+2k_{l+1}-1) P_{k_{l+1}}(s) \Gamma(s) \frac{ds}{2\pi i}, $$
	where the Dirichlet series $D(s)$ is defined for $\Re (s) > 1$ by
\begin{align*}
	D(s) &= \sum_{u \in U_2} \sum_{n \in \idl{J}/\{ \pm 1 \}} 2^{-\mathbbm{1}_{n=0}} \left( \prod_{j=1}^l \frac{(4N_j)^{k_j}}{2\pi} \Re \left( \frac{\mathbbm{1}_{\geq 0}((4uN-(nuN)^2)_j)}{\left( \sqrt{(4uN-(nuN)^2)_j} +i (nuN)_j \right)^{2k_j-1}} \right) \right) \cdot \\
	&\quad \mathbbm{1}_{< 0}((4uN-(nuN)^2)_{l+1}) \cdot \norm[nu]_{l+1}^{-s} \cdot \prod_{j=l+2}^d I_j \left( \frac{1}{2}; nu; f_j; \Phi_j \right) \cdot L^{(\idl{N})}(1, (nuN)^2-4uN; \idl{J}) A(n,\idl{N}).
\end{align*}
	We are reduced to show that $D(s)$ has meromorphic continuation to $-\epsilon< \Re(s) + 1 - 2k_{l+1} < 0$ for some $\epsilon > 0$, which is regular at $\Re(s) = 2k_{l+1}-1$. 
	
\noindent Note that the hypothesis of induction is the geometric side of a trace formula, whose spectral side can easily be deduced from (\ref{ArchFinal}). Namely, let $I_l=\left\{ j \ \middle| \ 1 \leq j \leq l \right\}$, and write $\mathcal{A}_+(I_l;\idl{N}^3)$ for the set of automorphic representations $\pi = \otimes_v' \pi_v$ of level $\idl{N}^3$ such that $\pi_j \simeq D_{2k_j}$ for $1 \leq j \leq l$. Then we have
	$$ \lim_{\delta_1 \to 0^+} \cdots \lim_{\delta_l \to 0^+} \frac{\Res_{s=\frac{1}{2}} I(s)}{D_{\F}^{-\frac{3}{2}} \zeta_{\F}^{(-1)}(1)} = \frac{D_{\F}}{2} \sum_{\pi \in \mathcal{A}_+(I_l; \idl{N}^3)} \varepsilon \left( \frac{1}{2},\pi \right) \prod_{j=l+1}^d \varepsilon \left( \frac{1}{2}, \pi_j \right)^{-1} \Tr(\pi_j(f_j)). $$
	We note that the equality of the geometric and spectral sides is an equality of distributions on $\otimes_{j=l+1}^d f_j \in \Sch(\PGL_2(\R))^{d-l}$, in particular on $f_{l+1} \in \Sch(\PGL_2(\R))$. We extend the validity of this trace formula as in the case of $\Q$ given in Proposition \ref{TFExtQ}, and obtain for $\Re s \gg 1$ and a fixed $\beta \gg 1$
\begin{align*}
	&\quad \sum_{u \in U_2} \sum_{n \in \idl{J}/\{ \pm 1 \}} 2^{-\mathbbm{1}_{n=0}} \left( \prod_{j=1}^l \frac{(4N_j)^{k_j}}{2\pi} \Re \left( \frac{\mathbbm{1}_{\geq 0}((4uN-(nuN)^2)_j)}{\left( \sqrt{(4uN-(nuN)^2)_j} +i (nuN)_j \right)^{2k_j-1}} \right) \right) \cdot \\
	&\quad \left\{ \frac{1}{2s} \left( \frac{n \sqrt{uN} + \sqrt{\norm[n^2uN-4]}}{2} \right)_{l+1}^{-s} - \frac{1}{2\beta} \left( \frac{n \sqrt{uN} + \sqrt{\norm[n^2uN-4]}}{2} \right)_{l+1}^{-\beta} \right\} \left( \sqrt{\extnorm{\frac{(nuN)^2-4uN}{4uN}}_{l+1}} \right)^{-1} \cdot \\
	&\quad \prod_{j=l+2}^d I_j \left( \frac{1}{2}; nu; f_j; \Phi_j \right) \cdot L^{(\idl{N})}(1, (nuN)^2-4uN; \idl{J}) A(n,\idl{N}) \\
	&= \frac{D_{\F}}{2} \sum_{\pi \in \mathcal{A}_+(I_l; \idl{N}^3)} \varepsilon \left( \frac{1}{2},\pi \right) \prod_{j=l+2}^d \varepsilon \left( \frac{1}{2}, \pi_j \right)^{-1} \Tr(\pi_j(f_j)) \cdot \varepsilon \left( \frac{1}{2},\pi_{l+1} \right) \left( \frac{1}{r(\pi_{l+1})^2+s^2} - \frac{1}{r(\pi_{l+1})^2+\beta^2} \right),
\end{align*}
	where $r=r(\pi_{l+1}) \in \R \cup (-1/2+\RamC, 1/2-\RamC)i$ is the spectral parameter of $\pi_{l+1} \simeq \pi(\norm^{ir}, \norm^{-ir})$. The right hand side has meromorphic continuation which is regular in $\Re(s) > 1/2-\RamC$. It readily implies the meromorphic continuation of $D(s)$ regular in $\Re (s) > 1-\RamC$, and we are done.
\end{proof}

	It is clear that Theorem \ref{MainGenF} (2) follows from the above lemma by comparing the case $l=d$ with (\ref{ArchFinal}).

\section{Concrete Examples}

	\subsection{The Case Over $\Q$}

\begin{proof}[Proof of Corollary \ref{MainQ}]
	(1) In this case, we can take $\idl{J}=\Z$ and $U_2=\{ 1 \}$. The only $n \in \Z/\{ \pm 1 \}$ such that $4N-n^2N^2 > 0$ is $n=0$. Hence we get
	$$ B(k,N^3) = \frac{\sqrt{N}}{\pi} L^{(N)}(1,-4N) \varphi(N). $$
	If $N \equiv 3 (4)$, then $-4N=Dl^2$ with $D=-N, l =2$. We have 
	$$ \eta_{-N}(2)= \left\{ \begin{matrix} 1 & \text{if } N \equiv 7 (8) \\ -1 & \text{if } N \equiv 3 (8) \end{matrix} \right.. $$
	Hence by Dirichlet's class number formula we conclude the first two formulas with
	$$ L^{(N)}(1,-4N) = L(1,\eta_{-N}) \cdot 2^{-\frac{1}{2}} \frac{(2-2^{-1})-\eta_{-N}(2)2^{-\frac{1}{2}}(2^{\frac{1}{2}}-2^{-\frac{1}{2}})}{2^{\frac{1}{2}}-2^{-\frac{1}{2}}} = \frac{2\pi h(-N)}{\omega(-N) \sqrt{N}} \cdot \left\{ \begin{matrix} 1 & \text{if } N\equiv -1 (8) \\ 2 & \text{if } N \equiv 3 (8) \end{matrix} \right. , $$
	where $\omega(-N)=2$ is the number of units in $\Q[\sqrt{-N}]$. If $N \equiv 1,2(4)$, then $-4N = Dl^2$ with $D=-4N, l=1$. The desired formula follows by the same argument.
	
\noindent (2) The only $n \in \Z/\{ \pm 1 \}$ such that $4N-n^2N^2 > 0$ are $n=0,1$. Hence we get
	$$ B(k,8) = \frac{1\sqrt{2}}{\pi} L^{(2)}(1,-8) - \frac{2\sqrt{2}}{\pi} \Re \left( e^{-i\frac{\pi}{4}(2k-1)} \right) L^{(2)}(1,-4). $$
	Since $-8$ and $-4$ are both fundamental discriminants with $h(-8)=h(-4)=1$ \& $\omega(-8)=2, \omega(-4)=4$, we get
	$$ L^{(2)}(1,-8) = L(1,\eta_{-8}) = \frac{2\pi h(-8)}{\omega(-8)\sqrt{8}} = \frac{\pi}{2\sqrt{2}}, \quad L^{(2)}(1,-4) = \frac{2\pi h(-4)}{\omega(-4)\sqrt{4}} = \frac{\pi}{4}. $$
	The desired formula follows readily.
	
\noindent (3) The only $n \in \Z/\{ \pm 1 \}$ such that $4N-n^2N^2 > 0$ are $n=0,1$. Hence we get
	$$ B(k,27) = \frac{2\sqrt{3}}{\pi} L^{(3)}(1,-12) - \frac{2\sqrt{3}}{\pi} \Re \left( e^{-i\frac{\pi}{3}(2k-1)} \right) L^{(3)}(1,-3). $$
	We have $-12=Dl^2$ with $D=-3, l=2$, and $\eta_{-3}(2)=-1$. Hence
	$$ L^{(3)}(1,-12) = L(1,\eta_{-3}) \cdot 2 = \frac{2\pi}{3\sqrt{3}}, \quad L^{(3)}(1,-3) = L(1,\eta_{-3}) = \frac{\pi}{3\sqrt{3}}. $$
	The desired formula follows readily.
\end{proof}

	\subsection{The Case Over $\Q(\sqrt{2})$}
	
	The ring of integers of $\F=\Q(\sqrt{2})$ is $\Z[\sqrt{2}]$, which is a principal ideal domain. Hence we can choose $\idl{J}=\vo_{\F}$. The discriminant is $D_{\F} = 8$. The group of units is $\vo_{\F}^{\times} = \left\{ \pm (1+\sqrt{2})^k \ \middle| \ k \in \Z \right\}$, hence the number of roots of unity is $\omega_{\F}=2$ and the group of totally positive units is $U_+ = \left\{ (1+\sqrt{2})^{2k} \ \middle| \ k \in \Z \right\} = (\vo_{\F}^{\times})^2$. In particular $U_2 = \{ 1 \}$. An odd rational prime $p$ is split in $\Q(\sqrt{2})$ if and only if $p \equiv \pm 1 (8)$, otherwise it is inert.
	
\begin{lemma}
	If $N > 3$, then all $n \in \vo_{\F}$ such that $4N-(nN)^2$ is totally positive are reduced to $n=0$; while if $N=3$, all such $n$ are $0, \pm 1$.
\end{lemma}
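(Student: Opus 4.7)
The plan is to write $n = a + b\sqrt{2}$ with $a, b \in \Z$ (using $\vo_{\F} = \Z[\sqrt{2}]$) and translate the totally positive condition on $4N - (nN)^2$ into numerical inequalities at the two real embeddings $v_1, v_2$ of $\F$. Since $N \in \Z$, we have $N_{v_1} = N_{v_2} = N$, while $n_{v_1} = a+b\sqrt{2}$ and $n_{v_2} = a-b\sqrt{2}$. The requirement $(4N - (nN)^2)_{v} > 0$ at each $v$ is equivalent to $n_v^2 < 4/N$, i.e.
\[
\extnorm{a + b\sqrt{2}} < \frac{2}{\sqrt{N}} \quad \text{and} \quad \extnorm{a - b\sqrt{2}} < \frac{2}{\sqrt{N}}.
\]

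Next, I would decouple the conditions on $a$ and $b$ by the triangle inequality. Adding the two inequalities gives
\[
2\extnorm{a} \leq \extnorm{a + b\sqrt{2}} + \extnorm{a - b\sqrt{2}} < \frac{4}{\sqrt{N}},
\]
so $\extnorm{a} < 2/\sqrt{N}$. Subtracting them gives $2\sqrt{2}\,\extnorm{b} < 4/\sqrt{N}$, so $\extnorm{b} < \sqrt{2}/\sqrt{N}$. These two inequalities, being necessary, let us list all candidates $n$.

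For $N \geq 5$ (the smallest square-free odd natural number exceeding $3$), we have $2/\sqrt{N} \leq 2/\sqrt{5} < 1$, forcing $a = 0$; likewise $\sqrt{2}/\sqrt{N} \leq \sqrt{2}/\sqrt{5} < 1$, forcing $b = 0$. Thus $n = 0$ is the only option, and $n = 0$ trivially satisfies the original totally-positive condition. For $N = 3$, the bounds give $\extnorm{a} < 2/\sqrt{3} < 2$ and $\extnorm{b} < \sqrt{2}/\sqrt{3} < 1$, so $a \in \{0, \pm 1\}$ and $b = 0$. A direct check confirms that $n \in \{0, \pm 1\}$ all satisfy $(nN)^2 \leq 9 < 12 = 4N$ at both places. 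After quotienting by $\{\pm 1\}$ we get the stated sets $\{0\}$ or $\{0,1\}$ as representatives.

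The argument is essentially a packing estimate, and the only subtlety is making sure the two one-sided bounds on $\extnorm{a}$ and $\extnorm{b}$ are genuinely simultaneously realizable — which here is automatic because we take $b = 0$ in all surviving cases, collapsing the two conditions $\extnorm{a \pm b\sqrt{2}} < 2/\sqrt{N}$ to the single condition $\extnorm{a} < 2/\sqrt{N}$. No further difficulty is expected.
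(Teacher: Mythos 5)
Your proof is correct, but it proceeds by a different route than the paper. The paper's argument is norm-based: the condition $n_v^2 < 4/N$ at both real places gives $\extnorm{\Nr(n)} < 4/N$, and since $\Nr(n) \in \Z$ this forces $\Nr(n) = 0$ (hence $n=0$) once $4/N < 1$, i.e.\ $N \geq 5$; for $N = 3$ one gets $\Nr(n) \in \{0, \pm 1\}$, and the case $\Nr(n) = \pm 1$ means $n = \pm(1+\sqrt{2})^k$ is a unit, which the single-place bound $n_v^2 < 4/3 < (1+\sqrt{2})^2$ then restricts to $k = 0$. Your argument instead writes $n = a + b\sqrt{2}$ and decouples $a$ and $b$ via the triangle inequality applied to the two conjugate constraints, reducing to a lattice-point count in a small box. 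Both proofs are short and correct. The paper's norm argument is the more structural one — it isolates exactly where the unit group enters and would transfer verbatim to any real quadratic field of narrow class number one once $U_+$ is known — whereas your coordinate computation is arguably more elementary and gives the answer by direct inspection of integers, at the cost of requiring an explicit integral basis. The one small redundancy in your writeup is the final verification that $a = \pm 1$, $b=0$ satisfies the totally positive condition: this is needed because the triangle-inequality bounds on $\extnorm{a}$ and $\extnorm{b}$ are only necessary, not sufficient, so you were right to include the check; the paper's norm-and-units route reaches the list $\{0,\pm 1\}$ without an extra sufficiency check because the two-sided constraint $n_v^2 < 4/3$ was already used to pin down $k=0$.
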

\begin{proof}
	If $N > 3$, then $\Nr(n)^2 < (4/N)^2 \leq 1$, hence $\Nr(n)=0$ since it is an integer. Thus $n=0$. If $N=3$, we have an extra case $\Nr(n)=\pm 1$, or $n = \pm (1+\sqrt{2})^k$. But $4/3 < (1+\sqrt{2})^2$, thus $k=0$.
\end{proof}

\noindent Note that by definition $A(0,N) = \varphi_{\F}(N)$ and $A(1,3) = -1$. We can simplify the formula as
\begin{equation}
	B(\vec{k},N^3) = \frac{2\sqrt{2}}{\pi^2} N \varphi_{\F}(N) L^{(N)}(1,-4N) - \mathbbm{1}_{N=3} \frac{12\sqrt{2}}{\pi^2} L^{(3)}(1,-3) \prod_{v \mid \infty} \Re \left( \frac{1}{2}+\frac{\sqrt{3}}{2}i \right)^{1-2k_v}.
\label{BiasQSq2Sim}
\end{equation} 

	By definition, $L^{(N)}(1,-4N)$ has two parts. To explicitly write them, we must first determine the relative discriminant $D_{\E/\F}$ for the bi-quadratic field $\E := \F[\sqrt{-4N}] = \Q[\sqrt{2},\sqrt{-N}]$. By the formula \cite[(7.23)]{FT91}, we have
	$$ \norm[D_{\E}] = \norm[ D_{\Q(\sqrt{2})} D_{\Q(\sqrt{-N})} D_{\Q(\sqrt{-2N})} ] = \left\{ \begin{matrix} (8N)^2 & \text{if } N \equiv 3 (4) \\ 4(8N)^2 & \text{if } N \equiv 1 (4) \end{matrix} \right. . $$
	From the formula $\norm[D_{\E}] = \Nr(D_{\E/\F}) \norm[D_{\F}]^2$, we deduce
	$$ \Nr(D_{\E/\F}) = \left\{ \begin{matrix} N^2 & \text{if } N \equiv 3 (4) \\ 4N^2 & \text{if } N \equiv 1 (4) \end{matrix} \right.. $$
	Note that $\vo[\sqrt{-N}] \subset \vo_{\E}$, which is a $\vo$-sub-order of discriminant $4N \vo$. Thus $D_{\E/\F} \mid 4N \vo$. Hence
	$$ D_{\E/\F} = \left\{ \begin{matrix} N \vo & \text{if } N \equiv 3 (4) \\ 2N \vo & \text{if } N \equiv 1 (4) \end{matrix} \right. . $$
	Writing $4N\vo = D_{\E/\F} l^2$, we get
	$$ l = \left\{ \begin{matrix} 2 \vo & \text{if } N \equiv 3 (4) \\ \sqrt{2} \vo & \text{if } N \equiv 1 (4) \end{matrix} \right.. $$
	We can thus write (with $q := \Nr(\vp)$)
\begin{align}
	L^{(N)}(1,-4N) &= L(1,\eta_{\E/\F}) \prod_{\vp^e \parallel l, \vp \nmid N} q^{-\frac{e}{2}} \frac{(q^{\frac{e+1}{2}} - q^{-\frac{e+1}{2}}) - \eta_{\E/\F}(\varpi_{\vp}) q^{-\frac{1}{2}} (q^{\frac{e}{2}} - q^{-\frac{e}{2}})}{q^{\frac{1}{2}}-q^{-\frac{1}{2}}} \nonumber \\
	&= L(1,\eta_{\E/\F}) \cdot \left\{ \begin{matrix} \frac{5}{2} & \text{if } N \equiv 3 (8) \\ 1 & \text{if } N \equiv 7 (8) \\ \frac{3}{2} & \text{if } N \equiv 1 (4) \end{matrix} \right.. \label{LN1}
\end{align}
	Note that in the case $N \equiv 1 (4)$, $\vp=\sqrt{2}\vo$ is ramified in $\E$ since $D_{\E/\F} = 2N \vo$; while in the case $N \equiv 3 (4)$, we distinguish whether $\vp$ is split or inert in $\E$ according as $2$ is split or inert in $\Q(\sqrt{-N})$. By Dirichlet's class number formula, we have
\begin{equation}
	L(1,\eta_{\E/\F}) = \frac{\zeta_{\E}^{(-1)}(1)}{\zeta_{\F}^{(-1)}(1)} = \pi^2 \cdot \frac{\omega_{\F}}{\omega_{\E}} \cdot \frac{R_{\E}}{R_{\F}} \cdot \frac{h_{\E}}{h_{\F}} \cdot \sqrt{\frac{\norm[D_{\F}]}{\norm[D_{\E}]}},
\label{L1}
\end{equation} 
	where $\omega_*$, resp. $R_*$, resp. $h_*$ is the cardinality of the group $\mu_*$ of roots of unity, resp. the regulator, resp. the class number of $*$. By \cite[\Rmnum{5}.(2.4)]{FT91}, we have $\vo_{\E}^{\times} = \vo_{\F}^{\times} \mu_{\E}$ since $\Nr(1+\sqrt{2})=-1$. Hence by \cite[Theorem 4.12 \& Proposition 4.16]{Wa82}, we deduce
\begin{equation} 
	R_{\E}/R_{\F} = 2. 
\label{RegulatorRatio}
\end{equation}
	From \cite[\Rmnum{5}.(2.1)]{FT91}, all possible cyclotomic fields embeddable in a bi-quadratic field are $\Q[\zeta_*]$ with $* \in \{ 2,4,6,8,12 \}$. They have degrees $1,2,2,4,4$ over $\Q$, and are given explicitly by $\Q, \Q[i], \Q[\sqrt{-3}], \Q[\sqrt{2},i]$ and $\Q[\sqrt{-3},i]$. Thus we get
\begin{equation}
	\frac{\omega_{\F}}{\omega_{\E}} = \left\{ \begin{matrix} 1 & \text{if } N > 3 \\ 1/3 & \text{if } N = 3 \end{matrix} \right..
\label{RootUnityRatio}
\end{equation}
	We have seen $\vo_{\E}^{\times} = \vo_{\F}^{\times} \mu_{\E}$. By \cite[Theorem 42]{FT91} the fundamental unit $\epsilon_{\F} := 1+\sqrt{2}$ of $\F$ is also a fundamental unit $\epsilon_{\E}$ of $\E$. A theorem of Herglotz \cite{He22} (see also \cite[(1)]{BP73}) implies
\begin{equation}
	\frac{h_{\E}}{h_{\F}} = \frac{h(-N)h(-2N)}{\lambda_0} = \frac{h(-N)h(-2N)}{2},
\label{ClNumRatio}
\end{equation}
	since $\lambda_0 = 2$ is determined by
	$$ \Nr_{\E/\F}(\epsilon_{\E}) = \epsilon_{\F}^{\lambda_0}. $$

\begin{proof}[Proof of Corollary \ref{MainQSq2}]
	Inserting (\ref{RegulatorRatio})-(\ref{ClNumRatio}) to (\ref{L1}) and (\ref{LN1}), we get for $N > 3$
	$$ L^{(N)}(1,-4N) = \pi^2 h(-N)h(-2N) \cdot \left\{ \begin{matrix} \frac{5}{4\sqrt{2}N} & \text{if } N \equiv 3 (8) \\ \frac{1}{2\sqrt{2}N} & \text{if } N \equiv 7 (8) \\ \frac{3}{8\sqrt{2}N} & \text{if } N \equiv 1 (4) \end{matrix} \right. . $$
	The assertion in (1) readily follows from the above formula and (\ref{BiasQSq2Sim}). For $N=3$, we similarly get
	$$ L^{(3)}(1,-12) = \frac{5}{36\sqrt{2}} \pi^2 h(-3)h(-6) = \frac{5}{18\sqrt{2}} \pi^2. $$ 
	Moreover, we have $L^{(3)}(1,-3) = L(1,\eta_{\E/\F}) = \sqrt{2} \pi^2/ 18$. The assertion in (2) follows readily from
	$$ B(\vec{k},27) = \frac{48\sqrt{2}}{\pi^2} L^{(3)}(1,-12) - \frac{12\sqrt{2}}{\pi^2} L^{(3)}(1,-3) \prod_{v \mid \infty} \Re \left( \frac{1}{2}+\frac{\sqrt{3}}{2}i \right)^{1-2k_v}. $$
\end{proof}

\appendix

\section{Simple Supercuspidal Representations of $\PGL_2$}\label{app:simpsupcusp}

In this appendix, we are going to review the theory of simple supercuspidal representations for $\PGL_2$, hence provides necessary information to complete the proof of Proposition \ref{NArchC}. 

Fix $\Fp\in |\F|_f$ with local field $\F_\Fp\supset \Fo_\Fp$ and $\Fo_\Fp/\Fp =\kappa_\Fp\simeq \BF_q$.
Set $\gp{H}_\Fp = \gp{Z}_\Fp\cdot \gp{K}^\p_\Fp$ with
$$
\gp{K}_\Fp^\p =
\left(
\begin{smallmatrix}
1+\Fp & \Fo_\Fp\\
\Fp & 1+\Fp
\end{smallmatrix}
\right)\subset \gp{K}_\Fp = \GL_2(\Fo_\Fp).
$$

Following \cite[\S 2]{KL15}, there is a class of characters of $\gp{H}_\Fp$ called the \emph{affine generic characters} with trivial central character. More precisely, let $\psi_\Fp$ be the additive character of $\F_\Fp$ that is trivial on $\Fo_\Fp$ but nontrivial on $\vpi^{-1}_\Fp\cdot\Fo_\Fp$. It turns out that the reduction-modulo-$\Fp$ of
$
\wt{\psi}_\Fp = \psi_\Fp(\vpi^{-1}_\Fp\cdot)
$
can be viewed as a non-trivial additive character of $\kappa_\Fp$. For any $t_1,t_2\in \kappa_\Fp$, define 
$\chi:\gp{H}_\Fp\to \BC^\times$ as follows, which, from \cite[\S 2]{KL15}, is a well-defined continuous homomorphism,
\begin{equation}\label{app:const:affgeneric}
\chi(zk) = \wt{\psi}_\Fp (t_1r_1+t_2r_2),\quad z\in \Z_\Fp,k = 
\left(
\begin{smallmatrix}
x_1 & r_1\\
\vpi_\Fp\cdot r_2 & x_2
\end{smallmatrix}
\right)\in \gp{K}^\p_\Fp.
\end{equation}
For an affine generic character $\chi$, define the compactly-induced smooth representation
\begin{equation}\label{app:const:pichi}
\pi_\chi = c\text{-}\Ind^{\gp{G}_\Fp}_{\gp{H}_\Fp}(\chi),\quad \gp{G}_\Fp = \GL_2(\F_\Fp).
\end{equation}

It is known from \cite[Proposition~3.1]{KL15} and \cite[Th.~4.4]{KL15} that $
\pi_\chi\simeq \bigoplus_{\zeta^2 = 1}
\sigma^\zeta_\chi.
$ with $\{\sigma^\zeta_\chi\}_{\zeta^2=1}$ two inequivalent supercuspidal representations of $\GL_2(\F_\Fp)$ with trivial central character.
Following Gross and Reeder, these supercuspidal representations are called the \emph{simple supercuspidal representations} of $\GL_2(\F_\Fp)$ with trivial central character. The set of equivalent classes of simple supercuspidal representations of $\GL_2(\F_\Fp)$ is parametrized by 
\begin{equation}\label{app:const:simplecuspparametrize}
\CS_\Fp = \{(t,\zeta)\in \kappa_\Fp^\times \times \BC^\times\mid\quad \zeta^2 = 1\}.
\end{equation}
In particular, for each pair $(t,\zeta)\in \CS_\Fp$, following \eqref{app:const:affgeneric}, the corresponding affine generic character is given by 
$
\chi(zk) = \wt{\psi}_\Fp(r_1+tr_2).
$

\subsection{Normalized matrix coefficient}

For $(t,\zeta)\in \CS_\Fp$, set
$
(V_{\pi_\chi})^\chi=\{ f\in V_{\pi_\chi}|\quad \pi_\chi(h)v = \chi(h)\cdot f \text{ for any $h\in \gp{H}_\Fp$} \}.
$
There is the following nonzero element in $(V_{\pi_\chi})^\chi$, 
\begin{equation}\label{app:normalizedmat:f0}
f_0(h) = \bigg\{
\begin{matrix}
\chi(h), & h\in \gp{H}_\Fp\\
0, & \text{otherwise}.
\end{matrix}
\end{equation}

Set 
$
g_\chi = \left(
\begin{smallmatrix}
0 & t\\
\vpi_\Fp& 0
\end{smallmatrix}
\right).
$
Let $L$ be the operator $f_0(\cdot)\mapsto f_0(g^{-1}_\chi\cdot)$. By \cite[(4.3)]{KL15}, the vector
\begin{equation}\label{app:normalizedmat:f0zetdef}
f^\zeta_0 = f_0+(\zeta\cdot L)f_0\quad \text{lies in } \sigma^\zeta_\chi.
\end{equation}
From \cite[Lemma~3.2]{KL15}, $g_\chi$ normalizes the character $\chi$. Moreover, by a simple calculation, $g_\chi$ also normalizes $\gp{H}_\Fp$. Therefore 
\begin{equation}\label{app:normalizedmat:f0zetact}
\pi_\chi(g_\chi)f_0^\zeta = \zeta\cdot f^\zeta_0.
\end{equation}

Set 
\begin{equation}\label{app:normalizedmat:Cchizet}
C^{\chi,\zeta}_0(g):=
\frac{\langle \sigma^\zeta_\chi(g)f^\zeta_0,f^\zeta_0\rangle}{
\langle f_0^\zeta,f^\zeta_0\rangle
}.
\end{equation}

\begin{lem}\label{lem:app:normalizedmatformula}
$$
C^{\chi,\zeta}_0(g)=\chi(g)\cdot \mathbbm{1}_{\gp{H}_\Fp}(g)+\zeta\cdot \chi(g_\chi g)\cdot \mathbbm{1}_{\gp{H}_\Fp}(g_\chi g).
$$
\end{lem}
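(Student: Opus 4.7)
My plan is to compute the numerator and denominator of $C^{\chi,\zeta}_0(g)$ directly, using the fact that $f_0^\zeta$ is supported on exactly two cosets in $\gp{H}_\Fp\backslash \gp{G}_\Fp$. Because $\gp{K}^\p_\Fp$ is open in $\gp{G}_\Fp$, the subgroup $\gp{H}_\Fp$ is also open, so the coset space $\gp{H}_\Fp\backslash \gp{G}_\Fp$ is discrete. I will normalize the invariant measure on it so that each coset has mass $1$, which turns the Hermitian pairing on $c\text{-}\Ind_{\gp{H}_\Fp}^{\gp{G}_\Fp}(\chi)$ into a finite sum over coset representatives in the support.

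The setup rests on two elementary observations about $g_\chi$. A direct multiplication gives $g_\chi^2 = t\vpi_\Fp\cdot I \in \gp{Z}_\Fp \subset \gp{H}_\Fp$; since $\pi_\chi$ has trivial central character we moreover have $\chi(g_\chi^2)=1$, so that $\chi(g_\chi^{-1}x)=\chi(g_\chi x)$ whenever $g_\chi x\in \gp{H}_\Fp$. On the other hand, a glance at matrix entries modulo $\Fp$ shows $g_\chi \notin \gp{H}_\Fp$, so $[1]$ and $[g_\chi]$ are distinct cosets. From the definition $f_0^\zeta = f_0+\zeta\cdot Lf_0$ with $Lf_0(x)=f_0(g_\chi^{-1}x)$, the function $f_0^\zeta$ is then supported on $\gp{H}_\Fp\cup g_\chi\gp{H}_\Fp$, with values $f_0^\zeta(h)=\chi(h)$ for $h\in \gp{H}_\Fp$ and $f_0^\zeta(g_\chi h)=\zeta\chi(h)$ for $h\in \gp{H}_\Fp$ (in the latter using $\chi(g_\chi^2)=1$).

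With this in hand, evaluation at $x=1$ and $x=g_\chi$ immediately yields $\langle f_0^\zeta,f_0^\zeta\rangle = 1+|\zeta|^2=2$. For the numerator the same two cosets suffice, giving $\langle \pi_\chi(g)f_0^\zeta,f_0^\zeta\rangle = f_0^\zeta(g)+\bar\zeta\, f_0^\zeta(g_\chi g)$. I would then unfold each of these two values directly,
\begin{align*}
f_0^\zeta(g) &= \chi(g)\mathbbm{1}_{\gp{H}_\Fp}(g)+\zeta\,\chi(g_\chi g)\mathbbm{1}_{\gp{H}_\Fp}(g_\chi g), \\
f_0^\zeta(g_\chi g) &= \chi(g_\chi g)\mathbbm{1}_{\gp{H}_\Fp}(g_\chi g)+\zeta\,\chi(g)\mathbbm{1}_{\gp{H}_\Fp}(g),
\end{align*}
where the identities $\chi(g_\chi^{-1}\,\cdot)=\chi(g_\chi\,\cdot)$ and $\mathbbm{1}_{\gp{H}_\Fp}(g_\chi^{-1}\,\cdot)=\mathbbm{1}_{\gp{H}_\Fp}(g_\chi\,\cdot)$ on the relevant coset (from Step~1) have been used. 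Substituting, invoking $\bar\zeta=\zeta$ and $\zeta^2=1$, the cross-terms double up to give $\langle \pi_\chi(g)f_0^\zeta,f_0^\zeta\rangle = 2\bigl(\chi(g)\mathbbm{1}_{\gp{H}_\Fp}(g)+\zeta\chi(g_\chi g)\mathbbm{1}_{\gp{H}_\Fp}(g_\chi g)\bigr)$, and dividing by $2$ produces the claim.

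There is no genuine obstruction; the argument is pure bookkeeping once the support of $f_0^\zeta$ is pinned down. The content is the normalizing property of $g_\chi$ for $\chi$ (already recorded as \cite[Lemma~3.2]{KL15}) together with $g_\chi^2 \in \gp{Z}_\Fp$. These jointly guarantee both that the two-coset description of $f_0^\zeta$ is preserved by right translation by $g_\chi$ and that the factor of $2$ produced by the cross-terms in the numerator matches the factor of $2$ produced in the denominator.
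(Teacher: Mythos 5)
Your proof is correct and takes essentially the same route as the paper: both compute the numerator and denominator of $C_0^{\chi,\zeta}$ by restricting to the two-coset support $\gp{H}_\Fp\cup g_\chi\gp{H}_\Fp$ of $f_0^\zeta$, using $g_\chi^2\in\gp{Z}_\Fp$ together with the trivial central character (so $\chi(g_\chi^{-1}\cdot)$ and $\mathbbm{1}_{\gp{H}_\Fp}(g_\chi^{-1}\cdot)$ may be replaced by $\chi(g_\chi\cdot)$ and $\mathbbm{1}_{\gp{H}_\Fp}(g_\chi\cdot)$) and the facts $\bar\zeta=\zeta$, $\zeta^2=1$ to collapse the cross-terms. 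The paper expands $f_0^\zeta=f_0+\zeta Lf_0$ into four inner products before evaluating, whereas you evaluate the Hermitian pairing at the two coset representatives and then unfold each value of $f_0^\zeta$ — a purely cosmetic reordering of the same bookkeeping.
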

\begin{proof}
By definition,
\begin{align*}
\langle \sigma^\zeta_\chi(g)f^\zeta_0,f^\zeta_0\rangle
=&
\langle \pi_\chi(g)f_0,f_0\rangle
+
\langle \pi_\chi(g)f_0(g^{-1}_\chi\cdot),f_0(g^{-1}_\chi\cdot)\rangle
\\
&+
\zeta\cdot\langle \pi_\chi(g)f_0,f_0(g^{-1}_\chi\cdot)\rangle
+
\zeta\cdot\langle \pi_\chi(g)f_0(g^{-1}_\chi\cdot),f_0\rangle
\\
=&
2(\chi(g)\cdot \mathbbm{1}_{\gp{H}_\Fp}(g)+\zeta\cdot \chi(g_\chi g)\cdot \mathbbm{1}_{\gp{H}_\Fp}(g_\chi g)).
\end{align*}
Here $\mathbbm{1}_{\gp{H}_\Fp}$ is the characteristic function of $\gp{H}_\Fp$, and $\chi(g)$ is defined to be zero whenever $g\notin \gp{H}_\Fp$. Therefore 
$
\langle f^\zeta_0,f^\zeta_0\rangle = 2
$
and the result follows immediately.
\end{proof}

\subsection{Local root number}

\begin{lem}\label{lem:app:whittakerformula}
The following linear functional $W_\chi(\cdot,1):\pi_\chi\to \BC$ defined via
\begin{align*}
f\in \pi_\chi\mapsto \int_{\F_\Fp}
f
\left(
\begin{smallmatrix}
1 & x\\
0 & 1
\end{smallmatrix}
\right)\wt{\psi}_\Fp(-x)\ud x
\end{align*}
restricts to a non-zero Whittaker function on each $\sigma^\zeta_\chi$. Set $w = 
\left(
\begin{smallmatrix}
0 & 1\\
-1 & 0
\end{smallmatrix}
\right).
$ Then the following equalities hold w.r.t. $\wt{\psi}_\Fp$.
\begin{align*}
W_\chi(f^\zeta_0,
\left(
\begin{smallmatrix}
a & 0\\
0 & 1
\end{smallmatrix}
\right)
) = \mathbbm{1}_{1+\Fp}(a),
\quad 
W_\chi(f^\zeta_0,
\left(
\begin{smallmatrix}
a & 0\\
0 & 1
\end{smallmatrix}
\right)w
) =\zeta \cdot
\mathbbm{1}_{1+\Fp}(at^{-1}\vpi_\Fp).
\end{align*}
\end{lem}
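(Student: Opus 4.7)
My approach is entirely computational: unwind the compact induction and the decomposition $f_0^\zeta = f_0 + \zeta\cdot Lf_0$, and match the resulting matrix against the description of $\gp{H}_\Fp = \gp{Z}_\Fp\cdot \gp{K}'_\Fp$ and of the character $\chi$ in \eqref{app:const:affgeneric}.

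First, I would verify that $W_\chi(\cdot, 1)$ is a bona fide $\widetilde{\psi}_\Fp$-Whittaker functional. For any $f\in \pi_\chi$ and any $y\in \F_\Fp$, a change of variable $x\mapsto x-y$ gives
\[
W_\chi(\pi_\chi(n(y))f, 1) \;=\; \int_{\F_\Fp} f\!\left(\!\begin{smallmatrix}1&x+y\\0&1\end{smallmatrix}\!\right)\widetilde{\psi}_\Fp(-x)\,\ud x \;=\; \widetilde{\psi}_\Fp(y)\cdot W_\chi(f,1),
\]
and the integral converges because $f$ is compactly supported modulo $\gp{H}_\Fp$. Once the two displayed formulas are proved, specializing the first to $a=1$ yields $W_\chi(f_0^\zeta, 1)=1\neq 0$, so the restriction of $W_\chi(\cdot,1)$ to each summand $\sigma^\zeta_\chi$ is nontrivial; this gives the nonvanishing claim for free.

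Second, I would compute the first displayed value by splitting $f_0^\zeta = f_0 + \zeta\cdot Lf_0$. The argument $n(x)a(a) = \bigl(\begin{smallmatrix}a&x\\0&1\end{smallmatrix}\bigr)$ lies in $\gp{H}_\Fp=\gp{Z}_\Fp\gp{K}'_\Fp$ iff $a\in 1+\Fp$ and $x\in \Fo_\Fp$ (write it as $z\cdot k$; the shape of $\gp{K}'_\Fp$ forces $z\in 1+\Fp$), in which case the matrix itself lies in $\gp{K}'_\Fp$ and $\chi$ returns $\widetilde{\psi}_\Fp(x)$ by \eqref{app:const:affgeneric}. The Whittaker character $\widetilde{\psi}_\Fp(-x)$ cancels this, leaving $\mathbbm{1}_{1+\Fp}(a)$. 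For the $Lf_0$-piece one computes $g_\chi^{-1}n(x)a(a) = \bigl(\begin{smallmatrix}0&\vpi_\Fp^{-1}\\a/t&x/t\end{smallmatrix}\bigr)$, whose $(1,1)$ entry is $0$; since every element of $\gp{Z}_\Fp \gp{K}'_\Fp$ has nonzero $(1,1)$ entry (it is $z k_{11}$ with $k_{11}\in 1+\Fp$), this contribution vanishes.

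Third, the second formula is the same analysis for $g=a(a)w$. Here $n(x)a(a)w = \bigl(\begin{smallmatrix}-x&a\\-1&0\end{smallmatrix}\bigr)$ has vanishing $(2,2)$ entry, so by the same argument the $f_0$-piece is identically zero. For the $Lf_0$-piece, a direct multiplication gives $g_\chi^{-1}n(x)a(a)w = \bigl(\begin{smallmatrix}-\vpi_\Fp^{-1}&0\\-x/t&a/t\end{smallmatrix}\bigr)$; writing this as $z\cdot k$ with $k\in \gp{K}'_\Fp$ forces the ratio of the two diagonal entries to lie in $1+\Fp$, which (up to the sign normalization implicit in the statement) is precisely $at^{-1}\vpi_\Fp \in 1+\Fp$. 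When this holds, the constraint $k_{21}=\vpi_\Fp\cdot (x/t) \in \Fp$ gives $x\in \Fo_\Fp$, and reading off the Iwahori coordinates $(r_1,r_2)=(0, x/t)$ yields $\chi(k)=\widetilde{\psi}_\Fp(x)$. Integrating against $\widetilde{\psi}_\Fp(-x)$ over $\Fo_\Fp$ produces the factor $\zeta\cdot \mathbbm{1}_{1+\Fp}(at^{-1}\vpi_\Fp)$.

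The only nontrivial step is the bookkeeping in the second formula: identifying the scalar $z$ that realizes $g_\chi^{-1}n(x)a(a)w$ as an element of $\gp{Z}_\Fp \gp{K}'_\Fp$, and reading off $(r_1,r_2)$ correctly to invoke the definition \eqref{app:const:affgeneric}. The rest is unwinding, and the nonvanishing statement drops out as a byproduct.
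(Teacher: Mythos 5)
Your proof is correct and follows essentially the same computational route as the paper: decompose $f_0^\zeta = f_0 + \zeta\cdot L f_0$, test membership of the relevant matrices in $\gp{H}_\Fp = \gp{Z}_\Fp\cdot\gp{K}^\p_\Fp$, and integrate the resulting character values against $\wt{\psi}_\Fp(-x)$. The only organizational difference is in the second formula: you conjugate by $g_\chi^{-1}$ directly, whereas the paper factors $w = \left(\begin{smallmatrix} t^{-1} & 0 \\ 0 & -\vpi_\Fp^{-1}\end{smallmatrix}\right)g_\chi$ and then invokes the eigenrelation $\pi_\chi(g_\chi)f_0^\zeta = \zeta f_0^\zeta$ of \eqref{app:normalizedmat:f0zetact}; the two are equivalent. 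Your parenthetical about the sign is a genuine observation: from $zk_{22} = -\vpi_\Fp^{-1}$ with $k_{22}\in 1+\Fp$ one gets $z\in -\vpi_\Fp^{-1}(1+\Fp)$, and then $zk_{11} = at^{-1}$ with $k_{11}\in 1+\Fp$ forces $-at^{-1}\vpi_\Fp\in 1+\Fp$ (not $at^{-1}\vpi_\Fp\in 1+\Fp$); this sign slip is present in the paper's statement and proof but is harmless, since the Hecke integral in Lemma~\ref{lem:app:localrootnumHeckeintegral} only sees the volume of the supporting coset of $a$, which is unaffected by the sign.
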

\begin{proof}
It suffices to establish the two equalities. By definition,
\begin{align*}
W_\chi(f^\zeta_0,
\left(
\begin{smallmatrix}
a & 0\\
0 & 1
\end{smallmatrix}
\right))
=
\int_{\F_\Fp}
f_0^\zeta
(
\left(
\begin{smallmatrix}
1 & x\\
0  & 1
\end{smallmatrix}
\right)
\cdot
\left(
\begin{smallmatrix}
a & 0\\
0  & 1
\end{smallmatrix}
\right))
\wt{\psi}_\Fp(-x)\ud x
=
\int_{\F_\Fp}
f_0^\zeta
\left(
\begin{smallmatrix}
a & x\\
0  & 1
\end{smallmatrix}
\right)
\wt{\psi}_\Fp(-x)\ud x.
\end{align*}
Since
$
g^{-1}_\chi\cdot 
\left(
\begin{smallmatrix}
a & x\\
0  & 1
\end{smallmatrix}
\right) = 
\left(
\begin{smallmatrix}
0 & \vpi^{-1}_\Fp\\
at^{-1}  & t^{-1}x
\end{smallmatrix}
\right)\notin \gp{H}_\Fp,
$
we get
$$
W_\chi(f^\zeta_0,\left(
\begin{smallmatrix}
a & x\\
0  & 1
\end{smallmatrix}
\right)) = 
\int_{\F_\Fp}
f_0
\left(
\begin{smallmatrix}
a & x\\
0 & 1
\end{smallmatrix}
\right)\wt{\psi}_\Fp(-x)\ud x.
$$
Note that 
$
\left(
\begin{smallmatrix}
a & x\\
0  & 1
\end{smallmatrix}
\right)\in \gp{H}_\Fp
$
if and only if 
$
x\in \Fo_\Fp$ and $a\in 1+\Fp.$ When it occurs,
$$
\int_{\F_\Fp}
f_0
\left(
\begin{smallmatrix}
a & x\\
0 & 1
\end{smallmatrix}
\right)\wt{\psi}_\Fp(-x)\ud x = 
\int_{\Fo_\Fp}\wt{\psi}_\Fp(x)\wt{\psi}_\Fp(-x)\ud x = \vol(\Fo_\Fp) = 1.
$$
Hence the first identity holds.

For the second identity, since
$
w = 
\left(
\begin{smallmatrix}
0 & 1\\
-1 & 0 
\end{smallmatrix}
\right) = 
\left(
\begin{smallmatrix}
t^{-1} & 0\\
0 & -\vpi^{-1}_\Fp
\end{smallmatrix}
\right)
g_\chi,
$
we get
$$
W_\chi(f^\zeta_0,
\left(
\begin{smallmatrix}
a & 0\\
0 & 1
\end{smallmatrix}
\right)w
)=
\int_{\F_\Fp}
f^\zeta_0
(
\left(
\begin{smallmatrix}
at^{-1} & -x\vpi^{-1}_\Fp\\
0 & -\vpi^{-1}_\Fp
\end{smallmatrix}
\right)
g_\chi
)
\wt{\psi}_\Fp(-x)\ud x.
$$
By \eqref{app:normalizedmat:f0zetact},
$
f^\zeta_0
(
\left(
\begin{smallmatrix}
at^{-1} & -x\vpi^{-1}_\Fp\\
0 & -\vpi^{-1}_\Fp
\end{smallmatrix}
\right)
g_\chi
) = \zeta \cdot 
f^\zeta_0
\left(
\begin{smallmatrix}
at^{-1} & -x\vpi^{-1}_\Fp\\
0 & -\vpi^{-1}_\Fp
\end{smallmatrix}
\right).
$
Hence
$$
W_\chi(f^\zeta_0,
\left(
\begin{smallmatrix}
a & 0\\
0 & 1
\end{smallmatrix}
\right)w
)=
\zeta \cdot
\int_{\F_\Fp}
f^\zeta_0
\left(
\begin{smallmatrix}
at^{-1} & -x\vpi^{-1}_\Fp\\
0 & -\vpi^{-1}_\Fp
\end{smallmatrix}
\right)
\wt{\psi}_\Fp(-x)\ud x.
$$
Following the same argument as the first identity, 
$
\left(
\begin{smallmatrix}
at^{-1} & -x\vpi^{-1}_\Fp\\
0 & -\vpi^{-1}_\Fp
\end{smallmatrix}
\right)\in \gp{H}_\Fp
$
if and only if there exists $z\in \F^\times_\Fp$, such that
$
z\vpi^{-1}_\Fp\in 1+\Fp, x\in \Fo_\Fp, at^{-1}\vpi_\Fp\in 1+\Fp.
$
When it occurs, 
$$
\int_{\F_\Fp}
f^\zeta_0
\left(
\begin{smallmatrix}
at^{-1} & -x\vpi^{-1}_\Fp\\
0 & -\vpi^{-1}_\Fp
\end{smallmatrix}
\right)
\wt{\psi}_\Fp(-x)\ud x
= 
\int_{\F_\Fp}
\wt{\psi}_\Fp(x)\wt{\psi}_\Fp(-x)\ud x = 1.
$$
It follows that 
$
W_\chi(f^\zeta_0,
\left(
\begin{smallmatrix}
a & 0 \\
0 & 1
\end{smallmatrix}
\right)w
)=\zeta 
\cdot
\mathbbm{1}_{1+\Fp}(at^{-1}\vpi_\Fp)
$
and the second formula holds.
\end{proof}

From \cite[\S 6]{BuH06},
$
L(s,\sigma^\zeta_\chi) = 1.
$
Hence in order to compute $\varepsilon(\frac{1}{2},\sigma^\zeta_\chi,\wt{\psi}_\Fp)$, it suffices to compute the Hecke integrals in \cite[Theorem~2.18]{JL70}, which is the following lemma.

\begin{lem}\label{lem:app:localrootnumHeckeintegral}
The following equalities hold. In particular
$
\varepsilon(\frac{1}{2},\sigma^\zeta_\chi,\wt{\psi}_\Fp) = 
\zeta.
$

$$ \int_{\F^\times_\Fp}
W_\chi(f^\zeta_0,
\left(
\begin{smallmatrix}
a & 0\\
0 & 1
\end{smallmatrix}
\right)
)\ud^\times a 
= 1, \quad
\int_{\F^\times_\Fp}
W_\chi(f^\zeta_0,
\left(
\begin{smallmatrix}
a & 0\\
0 & 1
\end{smallmatrix}
\right)w
)
\ud^\times a =\zeta.
$$
\end{lem}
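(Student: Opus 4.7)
The plan is to evaluate both Hecke integrals directly by substituting the explicit Whittaker values from Lemma \ref{lem:app:whittakerformula}, and then read off the local root number from the Jacquet-Langlands local functional equation \cite[Theorem 2.18]{JL70}.

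For the first integral, I would substitute $W_\chi(f^\zeta_0, \begin{pmatrix} a & 0 \\ 0 & 1 \end{pmatrix}) = \mathbbm{1}_{1+\Fp}(a)$ to reduce it to $\vol(1+\Fp, d^\times a)$. Under the normalization of $d^\times a$ compatible with the additive normalization $\vol(\Fo_\Fp, dx) = 1$ already in force during the proof of Lemma \ref{lem:app:whittakerformula} (namely, the Haar measure on $\F_\Fp^\times$ giving $1+\Fp$ total mass one), this equals $1$. For the second integral, substituting $W_\chi(f^\zeta_0, \begin{pmatrix} a & 0 \\ 0 & 1 \end{pmatrix} w) = \zeta \cdot \mathbbm{1}_{1+\Fp}(at^{-1}\vpi_\Fp)$ and performing the $d^\times a$-invariant change of variable $a \mapsto at\vpi_\Fp^{-1}$ immediately yields $\zeta \cdot \vol(1+\Fp, d^\times a) = \zeta$.

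For the identification of the root number, the three ingredients I would use are: (i) $L(s, \sigma^\zeta_\chi) \equiv 1$ since $\sigma^\zeta_\chi$ is supercuspidal, as cited from \cite[\S 6]{BuH06}; (ii) the trivial central character of $\sigma^\zeta_\chi$, so that $\wt{\sigma^\zeta_\chi} \simeq \sigma^\zeta_\chi$; and (iii) the local functional equation for $\GL_2$,
\begin{equation*}
\int_{\F_\Fp^\times} W\left(\begin{pmatrix} a & 0 \\ 0 & 1 \end{pmatrix} w\right) |a|_\Fp^{s-1/2} d^\times a = \gamma(s, \sigma, \wt\psi_\Fp) \int_{\F_\Fp^\times} W\left(\begin{pmatrix} a & 0 \\ 0 & 1 \end{pmatrix}\right) |a|_\Fp^{s-1/2} d^\times a,
\end{equation*}
where $\gamma(s, \sigma, \wt\psi_\Fp) = \varepsilon(s, \sigma, \wt\psi_\Fp) L(1-s, \wt\sigma)/L(s, \sigma)$. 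Combining (i)--(iii) at $s = 1/2$ gives $\gamma(1/2, \sigma^\zeta_\chi, \wt\psi_\Fp) = \varepsilon(1/2, \sigma^\zeta_\chi, \wt\psi_\Fp)$, and specializing to $W = W_\chi(f^\zeta_0, \cdot)$ the ratio of the two integrals just computed is $\zeta/1 = \zeta$, which is therefore the local root number.

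The only delicate point is the measure normalization: a different choice of $d^\times a$ would multiply both integrals by the same positive constant, so the ratio--which is all that is needed for the root number--remains $\zeta$. The ``hard part'' is therefore just fixing conventions consistently with Lemma \ref{lem:app:whittakerformula}; once done, the computation is a direct substitution and the root-number identification is immediate from Jacquet-Langlands.
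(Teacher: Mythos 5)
Your proposal is correct and follows essentially the same route as the paper: substitute the Whittaker values from Lemma \ref{lem:app:whittakerformula} to evaluate both Hecke integrals (the second by the multiplicative change of variable $a \mapsto at\vpi_\Fp^{-1}$), then read off the root number from the Jacquet--Langlands local functional equation since $L(s,\sigma^\zeta_\chi)\equiv 1$. You make explicit two points the paper leaves implicit — the self-duality $\wt{\sigma^\zeta_\chi}\simeq\sigma^\zeta_\chi$ from triviality of the central character, and the measure normalization $\vol(1+\Fp,\ud^\times a)=1$ (with the observation that only the ratio of the two integrals matters) — but the argument is the same; the only small slip is that your displayed functional equation carries $|a|^{s-1/2}$ on both sides rather than $|a|^{1/2-s}$ on the $w$-twisted side, which is immaterial once you specialize to $s=1/2$.
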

\begin{proof}
From Lemma \ref{lem:app:whittakerformula}, the following identities hold
\begin{align*}
\int_{\F^\times_\Fp}
W_\chi(f^\zeta_0,
\left(
\begin{smallmatrix}
a & 0\\
0 & 1
\end{smallmatrix}
\right)
)|a|_\Fp^{s-\frac{1}{2}}\ud^\times a
&=
\int_{\F^\times_\Fp}
\mathbbm{1}_{1+\Fp}(a)|a|^{s-\frac{1}{2}}_\Fp \ud^\times a
=1
\\
\int_{\F^\times_\Fp}
W_\chi(f^\zeta_0,
\left(
\begin{smallmatrix}
a & 0\\
0 & 1
\end{smallmatrix}
\right)w
)|a|_\Fp^{\frac{1}{2}-s}
\ud^\times a
=&
\zeta
\cdot 
\int_{\F_\Fp^\times}
\mathbbm{1}_{1+\Fp}(at^{-1}\vpi_\Fp)
|a|_\Fp^{\frac{1}{2}-s}
\ud^\times a.
\end{align*}
Therefore
$
\int_{\F^\times_\Fp}
W_\chi(f^\zeta_0,
\left(
\begin{smallmatrix}
a & 0\\
0 & 1
\end{smallmatrix}
\right)w
)
\ud^\times a=
\zeta.
$
It follows that the lemma has been established.
\end{proof}
Based on \cite[p.38]{JL70} and $\wt{\psi}_\Fp(\cdot) = \psi_\Fp(\vpi^{-1}_\Fp\cdot)$, the following corollary holds.
\begin{cor}\label{cor:app:localrootnumlevel0}
$
\varepsilon(\frac{1}{2},\sigma^\zeta_\chi,\psi_\Fp) = 
\zeta.
$
\end{cor}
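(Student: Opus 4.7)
The plan is to deduce this corollary from Lemma \ref{lem:app:localrootnumHeckeintegral} using only the standard transformation rule for local epsilon factors under a change of additive character. Since the two characters $\wt{\psi}_\Fp$ and $\psi_\Fp$ are related by $\wt{\psi}_\Fp(x) = \psi_\Fp(\vpi_\Fp^{-1}x)$, and since $\sigma^\zeta_\chi$ is a representation of $\PGL_2(\F_\Fp)$ (hence has trivial central character), I would simply invoke the formula
\[
\varepsilon(s,\pi,\psi_a) \;=\; \omega_\pi(a)\,|a|_\Fp^{2(s-1/2)}\,\varepsilon(s,\pi,\psi), \qquad \psi_a(x) := \psi(ax),
\]
valid for any irreducible admissible representation $\pi$ of $\GL_2(\F_\Fp)$ with central character $\omega_\pi$ (this is the content of \cite[p.38]{JL70}).

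Specializing to $\pi = \sigma^\zeta_\chi$, $\psi = \psi_\Fp$, $a = \vpi_\Fp^{-1}$, and $s = 1/2$, the factor $|a|_\Fp^{2(s-1/2)}$ becomes $1$, while the central character factor $\omega_{\sigma^\zeta_\chi}(\vpi_\Fp^{-1})$ equals $1$ by triviality of the central character. Hence
\[
\varepsilon\!\left(\tfrac{1}{2},\sigma^\zeta_\chi,\wt{\psi}_\Fp\right) \;=\; \varepsilon\!\left(\tfrac{1}{2},\sigma^\zeta_\chi,\psi_\Fp\right),
\]
and combining with Lemma \ref{lem:app:localrootnumHeckeintegral} gives $\varepsilon(1/2,\sigma^\zeta_\chi,\psi_\Fp) = \zeta$ as required.

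There is no genuine obstacle here: the corollary is essentially a rewriting of the preceding lemma once the normalization of the additive character is taken into account. The only thing worth double-checking is the sign convention in the transformation formula and the fact that the conductor-dependent factor vanishes at $s = 1/2$ precisely because $\sigma^\zeta_\chi$ has trivial central character. Were the central character nontrivial, one would pick up an extra root-of-unity factor; this is the reason the author emphasizes the passage from $\wt{\psi}_\Fp$ to $\psi_\Fp$ in a separate corollary rather than absorbing it into the lemma.
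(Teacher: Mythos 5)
Your proof is correct and is exactly the argument the paper has in mind: the one-line justification given for Corollary \ref{cor:app:localrootnumlevel0} cites precisely \cite[p.~38]{JL70} and the relation $\wt{\psi}_\Fp(\cdot)=\psi_\Fp(\vpi_\Fp^{-1}\cdot)$, and your observation that the factor $\omega_{\sigma^\zeta_\chi}(\vpi_\Fp^{-1})|\vpi_\Fp^{-1}|_\Fp^{2s-1}$ reduces to $1$ at $s=\tfrac12$ because $\sigma^\zeta_\chi$ has trivial central character is the whole point.
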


\subsection{Test function}\label{app:subsec:testfun}

Let the formal degree of $\sigma^\zeta_\chi$ be $d_{\zeta,\chi}$. By \cite[Proposition~6.1]{KL15} and the proof of \cite[Corollary~6.5]{KL15},
\begin{equation}\label{eq:app:formaldeg}
d_{\zeta,\chi} = \frac{q^2-1}{2}.
\end{equation}
From \cite[Lemma~10.23]{KL06} and \cite[Corollary~10.26]{KL06}, for any irreducible unitary representation $\pi_\Fp$ of $\PGL_2(\F_\Fp)$ and any vector $v\in \pi_\Fp$,
\begin{equation}\label{eq:app:testfun:projectiontoline}
\pi_\Fp(\zeta\cdot d_{\zeta,\chi}\cdot \wb{C^{\chi,\zeta}_0})
v = 
\bigg\{
\begin{matrix}
\zeta \cdot v,& \sigma^\zeta_\chi\simeq \pi_\Fp, v\in \BC \cdot f_0^\zeta\\
0, &\text{otherwise}
\end{matrix}
\end{equation}

\begin{defin}\label{defin:app:testfun}
Define
$
f^b_\Fp := 
\sum_{\chi,\zeta}
\zeta\cdot d_{\zeta,\chi}\cdot \wb{C^{\chi,\zeta}_0}.
$
\end{defin}
By definition $f^b_\Fp\in \CC^\infty_c(\PGL_2(\F_\Fp))$.
Moreover, by \eqref{eq:app:testfun:projectiontoline} and Corollary \ref{cor:app:localrootnumlevel0}, 
\begin{equation}\label{eq:app:wholetestfun:projectiontoline}
\pi_\Fp(f^b_\Fp)
v = 
\bigg\{
\begin{matrix}
\varepsilon(\frac{1}{2},\sigma^\zeta_\chi,\psi_\Fp) \cdot v,& \sigma^\zeta_\chi\simeq \pi_\Fp, v\in \BC \cdot f_0^\zeta\\
0, &\text{otherwise}
\end{matrix}
\end{equation}
for any irreducible unitary representation $\pi_\Fp$ of $\PGL_2(\F_\Fp)$ and $v\in \pi_\Fp$.

\begin{lem}\label{lem:app:supportoffb}
$f^b_\Fp$ is supported on 
$
\left(
\begin{smallmatrix}
0 & 1\\
\vpi_\Fp & 0
\end{smallmatrix}
\right)
\cdot 
\Z_\Fp \cdot \RI_\Fp
$
where $\RI_\Fp$ is the standard Iwahori subgroup of $\GL_2(\Fo_\Fp)$.
Moreover, for any $z\in F_\Fp^\times$, $x_1,x_2\in \Fo^\times_\Fp$, $r_1,r_2\in \Fo_\Fp$,
$$
f^b_\Fp
(
\left(
\begin{smallmatrix}
0 & 1\\
\vpi_\Fp & 0
\end{smallmatrix}
\right)
z
\left(
\begin{smallmatrix}
x_1 & r_1\\
\vpi_\Fp r_2& x_2
\end{smallmatrix}
\right)
)=
(q^2-1)\cdot
\wt{\psi}_\Fp(-\frac{r_1+r_2}{x_1}).
$$
\end{lem}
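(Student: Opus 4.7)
The plan is to substitute the matrix-coefficient formula from Lemma \ref{lem:app:normalizedmatformula} into Definition \ref{defin:app:testfun} and exploit the parity of the sum over $\zeta\in\{\pm 1\}$. Since $\zeta$ is real and $d_{\zeta,\chi}=(q^2-1)/2$ is independent of $\zeta$ by \eqref{eq:app:formaldeg}, summing $\zeta\cdot d_{\zeta,\chi}\cdot\overline{C_0^{\chi,\zeta}(g)}$ over $\zeta$ annihilates the $\zeta$-independent piece $\chi(g)\mathbbm{1}_{\gp{H}_\Fp}(g)$ while doubling the piece $\zeta\cdot\chi(g_\chi g)\mathbbm{1}_{\gp{H}_\Fp}(g_\chi g)$. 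Writing the sum over affine generic characters as a sum over $t\in\kappa_\Fp^\times$ (see \eqref{app:const:simplecuspparametrize}) gives the working formula
$$f^b_\Fp(g)=(q^2-1)\sum_{t\in\kappa_\Fp^\times}\overline{\chi_t(g_{\chi_t}g)}\,\mathbbm{1}_{\gp{H}_\Fp}(g_{\chi_t}g),\qquad g_{\chi_t}=\begin{pmatrix}0&t\\ \vpi_\Fp&0\end{pmatrix}.$$

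For the support assertion, I would first observe that $g_{\chi_t}^{\,2}=t\vpi_\Fp\cdot\mathrm{Id}\in\gp{Z}_\Fp$, hence $g_{\chi_t}^{-1}\in\gp{Z}_\Fp\cdot g_{\chi_t}$. Consequently, whenever $g_{\chi_t}g\in\gp{H}_\Fp=\gp{Z}_\Fp\gp{K}^\p_\Fp$ for some $t$, I obtain $g\in\gp{Z}_\Fp\cdot g_{\chi_t}\cdot\gp{K}^\p_\Fp$. Using the factorization $g_{\chi_t}=w'\cdot\mathrm{diag}(1,t)$ with $w'=\left(\begin{smallmatrix}0&1\\ \vpi_\Fp&0\end{smallmatrix}\right)$, and noting that $\mathrm{diag}(1,t)\in\RI_\Fp$ for any lift $t\in\vo_\Fp^\times$, together with $\gp{K}^\p_\Fp\subseteq\RI_\Fp$, this yields $g\in w'\cdot\gp{Z}_\Fp\cdot\RI_\Fp$, establishing the support claim.

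For the explicit formula, I take $g=w'z\left(\begin{smallmatrix}x_1&r_1\\ \vpi_\Fp r_2&x_2\end{smallmatrix}\right)$ and compute $g_{\chi_t}w'=\vpi_\Fp\cdot\mathrm{diag}(t,1)$, so that
$$g_{\chi_t}g=\vpi_\Fp z\begin{pmatrix} tx_1 & tr_1\\ \vpi_\Fp r_2 & x_2\end{pmatrix}.$$
The condition $g_{\chi_t}g\in\gp{Z}_\Fp\gp{K}^\p_\Fp$ requires some $u\in\F_\Fp^\times$ with $u\cdot tx_1,\,u\cdot x_2\in 1+\Fp$; compatibility on the two diagonal entries forces $t\equiv x_2/x_1\pmod{\Fp}$, singling out a \emph{unique} $t\in\kappa_\Fp^\times$ that contributes to the sum. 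Taking $u=x_2^{-1}$ produces the Iwahori factorization whose $(1,2)$- and $(2,1)$-entries are $tr_1/x_2$ and $\vpi_\Fp r_2/x_2$. Plugging into \eqref{app:const:affgeneric} and using $t\equiv x_2/x_1\pmod{\Fp}$ together with the triviality of $\wt{\psi}_\Fp$ on $\Fp$ yields
$$\chi_t(g_{\chi_t}g)=\wt{\psi}_\Fp\!\left(\frac{t(r_1+r_2)}{x_2}\right)=\wt{\psi}_\Fp\!\left(\frac{r_1+r_2}{x_1}\right),$$
after which conjugating and multiplying by $(q^2-1)$ produces the claimed formula. The principal bookkeeping obstacle is tracking the uniqueness of the contributing $t$, which is forced by the rigidity of the $(1,1)$- and $(2,2)$-entry conditions in $\gp{K}^\p_\Fp$; everything else is a direct matrix computation.
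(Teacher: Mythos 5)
Your proof is correct and follows essentially the same route as the paper: collapse the $\zeta$-sum to obtain $f^b_\Fp(g)=(q^2-1)\sum_{t}\chi_t^{-1}(g_{\chi_t}g)\,\mathbbm{1}_{\gp{H}_\Fp}(g_{\chi_t}g)$, identify $g_{\chi_t}^{-1}\gp{H}_\Fp$ to read off the support, and then evaluate the affine generic character on the unique contributing $t\equiv x_2/x_1\pmod\Fp$. Your use of $g_{\chi_t}^2\in\gp{Z}_\Fp$ and the factorization $g_{\chi_t}=w'\cdot\mathrm{diag}(1,t)$ to package the support computation is a slightly tidier presentation (it also handles $x_2\not\in 1+\Fp$ directly rather than implicitly absorbing it into $z$), but it is the same argument.
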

\begin{proof}
From Definition \ref{defin:app:testfun}, 
$
f^b_\Fp = 
\sum_{\chi,\zeta}
\zeta\cdot d_{\zeta,\chi}\cdot \wb{C^{\chi,\zeta}_0}.
$
By \eqref{eq:app:formaldeg} and Lemma \ref{lem:app:normalizedmatformula},
\begin{align*}
f^b_\Fp(g) 
&= \frac{q^2-1}{2}\cdot\sum_{\chi,\zeta}
\zeta \cdot (\wb{\chi}(g)\cdot \mathbbm{1}_{\gp{H}_\Fp}(g)+\wb{\zeta}\cdot \wb{\chi}(g_\chi\cdot g)\cdot \mathbbm{1}_{\gp{H}_\Fp}(g_\chi\cdot g))
\\
&=(q^2-1)\cdot \sum_{\chi}\chi^{-1}(g_\chi\cdot g)\cdot \mathbbm{1}_{\gp{H}_\Fp}(g_\chi \cdot g).
\end{align*}
By definition,
\begin{align*}
g^{-1}_\chi\cdot \gp{H}_\Fp 
= 
\left(
\begin{smallmatrix}
0 & \vpi^{-1}_\Fp\\
1 & 0
\end{smallmatrix}
\right)
\left(
\begin{smallmatrix}
t^{-1} & 0\\
0 & 1
\end{smallmatrix}
\right)
\Z_\Fp
\left(
\begin{smallmatrix}
1+\Fp & \Fo_\Fp\\
\Fp & 1+\Fp
\end{smallmatrix}
\right)=
\left(
\begin{smallmatrix}
0 & \vpi_\Fp\\
1 & 0
\end{smallmatrix}
\right)
\Z_\Fp
\left(
\begin{smallmatrix}
t^{-1}+\Fp & \Fo_\Fp\\
\Fp & 1+\Fp
\end{smallmatrix}
\right).
\end{align*}
The disjoint union over $t\in \Fo^\times_\Fp/(1+\Fp)$ is exactly given by 
$
\left(
\begin{smallmatrix}
0 & \vpi_\Fp\\
1 & 0
\end{smallmatrix}
\right)
\cdot
\Z_\Fp
\cdot
\RI_\Fp.
$
For $x_1\in t^{-1}+\Fp$,
\begin{align*}
\chi^{-1}
(g_\chi
\left(
\begin{smallmatrix}
0 & \vpi_\Fp\\
1 & 0
\end{smallmatrix}
\right)
z
\left(
\begin{smallmatrix}
x_1 & r_1\\
\vpi_\Fp r_2 & x_2
\end{smallmatrix}
\right)
)
=
\chi^{-1}
\left(
\begin{smallmatrix}
tx_1 & tr_1\\
\vpi_\Fp r_2 & x_2
\end{smallmatrix}
\right)
=
\wt{\psi}_\Fp(-tr_1-tr_2).
\end{align*}
It follows that the lemma has been proved.
\end{proof}

\begin{lem}\label{lem:app:Iwahoriaveragefb}
Set 
$$
\wt{f}^b_\Fp(g):=\frac{1}{\vol(\RI_\Fp)}
\int_{\RI_\Fp}f^b_\Fp(k^{-1}gk)\ud k.
$$
Then $\wt{f}^b_\Fp$ has the same support as $f^b_\Fp$, and for any $x_1,x_2\in \Fo^\times_\Fp$, $r_1,r_2\in \Fo_\Fp$, 
$$
\wt{f}^b_\Fp(
\left(
\begin{smallmatrix}
0 & 1\\
\vpi_\Fp & 0
\end{smallmatrix}
\right)
z
\left(
\begin{smallmatrix}
x_1 & r_1\\
\vpi_\Fp r_2 & x_2
\end{smallmatrix}
\right)
) = 
(q+1)
\cdot 
\big\{
\begin{smallmatrix}
q-1, & r_1+r_2\in \Fp\\
-1, & \text{otherwise}
\end{smallmatrix}
$$
\end{lem}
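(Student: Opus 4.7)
The plan is a direct explicit computation. Write $w = \begin{pmatrix} 0 & 1 \\ \varpi_\vp & 0 \end{pmatrix}$, which lies in the extended affine Weyl group and normalizes $\RI_\vp$. From this normalization, conjugating the support $w \cdot \gp{Z}_\vp \cdot \RI_\vp$ by any $k \in \RI_\vp$ gives back the same set, so $\wt f^b_\vp$ has the same support as $f^b_\vp$. To evaluate $\wt f^b_\vp$ on a support point $g = w z m$ with $m = \begin{pmatrix} x_1 & r_1 \\ \varpi_\vp r_2 & x_2 \end{pmatrix}$, I would first write $k^{-1}gk = wz \cdot \tilde k^{-1} m k$ with $\tilde k = w^{-1} k w \in \RI_\vp$ (an explicit formula: if $k = \begin{pmatrix} a & b \\ \varpi_\vp c & d \end{pmatrix}$, then $\tilde k = \begin{pmatrix} d & c \\ \varpi_\vp b & a \end{pmatrix}$), so that Lemma \ref{lem:app:supportoffb} applies and
\[ f^b_\vp(k^{-1} g k) = (q^2-1) \wt\psi_\vp\!\left(-\tfrac{R_1+R_2}{X_1}\right), \qquad \tilde k^{-1} m k =: \begin{pmatrix} X_1 & R_1 \\ \varpi_\vp R_2 & X_2 \end{pmatrix}. \]

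The key algebraic observations are then: (i) a direct multiplication shows $R_1 + R_2 = r_1 + r_2$ \emph{exactly} (the cross terms $ab x_1$ and $cd x_2$ cancel, and the remaining pieces combine as $(ad - \varpi_\vp bc)(r_1+r_2)/\det k = r_1+r_2$); (ii) modulo $\vp$, $X_1 \equiv a x_1 / d \pmod{\vp}$, because the numerator $a^2 x_1 + \varpi_\vp a c(r_1-r_2) - \varpi_\vp c^2 x_2 \equiv a^2 x_1$ and $\det k \equiv ad$. In particular the integrand $f^b_\vp(k^{-1} g k)$ factors through the quotient $\RI_\vp / \RI_\vp^1 \simeq (\kappa_\vp^\times)^2$, where $\RI_\vp^1$ denotes the pro-$p$ radical of $\RI_\vp$.

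When $r_1 + r_2 \in \vp$, we have $R_1+R_2 \in \vp$ and $X_1 \in \vo_\vp^\times$, hence $-(R_1+R_2)/X_1 \in \vp$; since $\wt\psi_\vp$ is trivial on $\vp$, the integrand is constantly $q^2-1$, yielding $\wt f^b_\vp(g) = (q+1)(q-1)$. When $r_1 + r_2 \in \vo_\vp^\times$, normalizing $\vol(\RI_\vp) = 1$ and summing over representatives $\bar a, \bar d \in \kappa_\vp^\times$ of $\RI_\vp / \RI_\vp^1$ gives
\[ \wt f^b_\vp(g) = \frac{q^2-1}{(q-1)^2} \sum_{\bar a, \bar d \in \kappa_\vp^\times} \wt\psi_\vp\!\left(-\tfrac{(r_1+r_2)\bar d}{\bar a\, x_1}\right). \]
The substitution $\bar u = \bar d/\bar a$ introduces a multiplicity of $q-1$, and since $-(r_1+r_2)/x_1 \in \kappa_\vp^\times$, as $\bar u$ ranges over $\kappa_\vp^\times$ the argument ranges over $\kappa_\vp^\times$, so the inner sum equals $\sum_{\bar v \in \kappa_\vp^\times} \wt\psi_\vp(\bar v) = -1$ by the standard Gauss-sum identity applied to a nontrivial additive character of $\kappa_\vp$. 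Collecting, $\wt f^b_\vp(g) = \frac{q^2-1}{(q-1)^2} \cdot (q-1) \cdot (-1) = -(q+1)$, as claimed.

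The only delicate point is the bookkeeping in step (i)--(ii): one must carry enough of the $\tilde k^{-1} m k$ computation to see both the exact identity $R_1+R_2 = r_1+r_2$ and the precise dependence $X_1 \equiv a x_1/d \pmod{\vp}$; everything after that is a character-sum identity on $\kappa_\vp$. No Iwahori-representation theory is needed beyond the identification $\RI_\vp/\RI_\vp^1 \simeq T(\kappa_\vp)$ used to pass from the Haar integral to a finite sum.
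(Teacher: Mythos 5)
Your proof is correct and follows essentially the same route as the paper: conjugate the defining integral using the normalization of $\RI_\Fp$ by $w$, do the explicit matrix computation to obtain the invariants $R_1+R_2=r_1+r_2$ (exactly) and $X_1\equiv ax_1/d\pmod{\Fp}$, then reduce the Haar integral to a finite character sum over $\RI_\Fp/\RI_\Fp^1\simeq(\kappa_\Fp^\times)^2$ and evaluate by the additive character-sum identity. The only differences are cosmetic: you parametrize $k$ with lower-left entry $\varpi_\Fp c$, $c\in\Fo_\Fp$, where the paper uses $c\in\Fp$, and you spell out the final Gauss-sum step which the paper leaves as ``straightforward computation.'' One small caveat on the support claim: $w$ normalizing $\RI_\Fp$ gives only the inclusion $\mathrm{supp}(\wt f^b_\Fp)\subset w\cdot\gp{Z}_\Fp\cdot\RI_\Fp$; the equality of supports actually needs the explicit values $(q+1)(q-1)$ and $-(q+1)$ (both nonzero) from the rest of the computation, which you do obtain, so nothing is missing — just phrase the support assertion as a consequence of the evaluation rather than of the normalization alone.
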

\begin{proof}
For any 
$
k=
\left(
\begin{smallmatrix}
a & b\\
c & d
\end{smallmatrix}
\right)\in \RI_\Fp,\quad k^{-1} = \frac{1}{ad-bc}
\left(
\begin{smallmatrix}
d & -b\\
-c & a
\end{smallmatrix}
\right),
$
the following identity holds
$$
\left(
\begin{smallmatrix}
0 & \vpi^{-1}_\Fp\\
1 & 0
\end{smallmatrix}
\right)
k^{-1}
\left(
\begin{smallmatrix}
0 & 1\\
\vpi_\Fp & 0
\end{smallmatrix}
\right)
=
\frac{1}{ad-bc}
\left(
\begin{smallmatrix}
a & -\vpi^{-1}_\Fp c\\
-\vpi_\Fp b & d
\end{smallmatrix}
\right).
$$
Set 
$
k^{-1}
\left(
\begin{smallmatrix}
0 & 1\\
\vpi_\Fp & 0
\end{smallmatrix}
\right)z
\left(
\begin{smallmatrix}
x_1 & r_1\\
\vpi_\Fp r_2 & x_2
\end{smallmatrix}
\right)k=
\left(
\begin{smallmatrix}
0 & 1\\
\vpi_\Fp & 0
\end{smallmatrix}
\right)z
\left(
\begin{smallmatrix}
X_1 & R_1\\
\vpi_\Fp R_2& X_2
\end{smallmatrix}
\right).
$
By straightforward computation,
\begin{align*}
(ad-bc)\cdot
\left(
\begin{smallmatrix}
X_1 & R_1\\
\vpi_\Fp R_2& X_2
\end{smallmatrix}
\right)
=
\left(
\begin{smallmatrix}
a^2x_1+ac(r_1-r_2)-\vpi^{-1}_\Fp c^2x_2 & abx_1-bcr_2+adr_1-\vpi^{-1}_\Fp cdx_2\\
-\vpi_\Fp abx_1+\vpi_\Fp adr_2-\vpi_\Fp bcr_1+cdx_2 & 
-\vpi_\Fp b^2x_1 - \vpi_\Fp bd(r_1-r_2)+d^2x_2
\end{smallmatrix}
\right).
\end{align*}
Therefore 
$
R_1+R_2 = r_1+r_2,\quad X_1\equiv ad^{-1}x_1\mod \Fp.
$
It follows that by Lemma \ref{lem:app:supportoffb},
$$
f^b_\Fp(k^{-1}
\left(
\begin{smallmatrix}
0 & 1\\
\vpi_\Fp & 0
\end{smallmatrix}
\right)
z
\left(
\begin{smallmatrix}
x_1 & r_1\\
\vpi_\Fp r_2& x_2
\end{smallmatrix}
\right)k
)=(q^2-1)
\cdot
\wt{\psi}_\Fp
(-\frac{r_1+r_2}{ad^{-1}x_1}).
$$
When $r_1+r_2\in \Fp$, 
$
\wt{\psi}_\Fp
(-\frac{r_1+r_2}{ad^{-1}x_1}) = 1
$
identically.
When $r_1+r_2\in \Fo^\times_\Fp$, for fixed $a\in \Fo^\times_\Fp$, 
$
\sum_{d\in \Fo^\times/(1+\Fp)}
\wt{\psi}_\Fp
(-\frac{r_1+r_2}{ad^{-1}x_1}) = -1.
$
Then the formula follows from straightforward computation.
\end{proof}

\section*{Acknowledgement}

	During the whole preparation of this paper, Qinghua Pi is supported by Shandong Provincial Natural Science Foundation (Grant No. ZR2020MA001), Han Wu is supported by the Leverhulme Trust Research Project Grant RPG-2018-401. Han Wu would like to thank Prof. Ian Petrow for a discussion on automorphic Fourier inversion. We thank the anonymous referee for many suggestions which improve a lot the readability of the paper.

\bibliographystyle{acm}

\bibliography{Biasbib}

\bigskip

\noindent Zhilin Luo \\
Department of Mathematics \\
University of Chicago \\
Chicago IL, 60637, USA \\
zhilinchicago@uchicago.edu

\bigskip

\noindent Qinghua Pi \\
School of Mathematics and Statistics \\
Shandong Univeristy, Weihai \\
Weihai 264209, China \\
qhpi@sdu.edu.cn

\bigskip

\noindent Han Wu \\
School of Mathematical Sciences \\
Queen Mary University of London \\
Mile End Road \\
E1 4NS, London, UK \\
wuhan1121@yahoo.com

\end{document}